\documentclass[12pt, aps, showpacs, pra, superscriptadaddress, eqsecnum,notitlepage,USLetter,nofootinbib]{revtex4-1}

\usepackage{titlesec}
\usepackage{hyperref}
\usepackage{amssymb}
\usepackage{amsthm}
\usepackage{ulem}
\usepackage{amsmath}
\usepackage{amsfonts}
\usepackage{epstopdf}
\usepackage{mathtools,nccmath,relsize}
\usepackage[dvipsnames]{xcolor}
\usepackage{bbm}
\usepackage{enumerate}
\usepackage{calrsfs}
\usepackage{bigints}
\usepackage{autonum}
\newtheorem{Th}{Theorem}[section]
\newtheorem{Prop}[Th]{Proposition}
\newtheorem{Lem}[Th]{Lemma}

\theoremstyle{Remark}
\newtheorem{Rmk}[Th]{Remark}
\theoremstyle{Definition}

\theoremstyle{Corollary}



\theoremstyle{plain} 
\newcommand{\thistheoremname}{}
\newtheorem{genericthm}[Th]{\thistheoremname}
\newenvironment{namedthm}[1]
  {\renewcommand{\thistheoremname}{#1}%
   \begin{genericthm}}
  {\end{genericthm}}

\allowdisplaybreaks

\newcommand{\weak}{\rightharpoonup}
\newcommand{\ZT}{\mathcal{Z}}
\newcommand{\EV}{\mathcal{E}_V}
\newcommand{\Ez}{\mathcal{E}_0}
\newcommand{\EVn}{\mathcal{E}_{V_n}}
\newcommand{\ETF}{\mathcal{E}_{V_{TF}}}
\newcommand{\RRR}{\mathbb{R}^3}
\newcommand{\R}{\mathbb{R}}
\newcommand{\Hone}{\mathcal{H}^1(\RRR)}
\newcommand{\eps}{\epsilon}
\newcommand{\Om}{\Omega}
\newcommand{\nnn}{\nonumber}
\newcommand{\NN}{\mathbb{N}}
\newcommand{\Rnb}{\overline{R}_n}
\newcommand{\Rnz}{R_n^0}
\newcommand{\rnb}{\overline{\rho}_n}

\begin{document}

\baselineskip=15pt

\title{Mass splitting in the Thomas-Fermi-Dirac-von Weizs\"acker model with background potential
}

\author{Lorena Aguirre Salazar}
\affiliation{Department of Mathematics and Statistics, McMaster University, 1280 Main Street West, Hamilton, Ontario, Canada}
\author{Stan Alama}
\affiliation{Department of Mathematics and Statistics, McMaster University, 1280 Main Street West, Hamilton, Ontario, Canada}
\author{Lia Bronsard}
\affiliation{Department of Mathematics and Statistics, McMaster University, 1280 Main Street West, Hamilton, Ontario, Canada}

\begin{abstract}

\begin{center}\textbf{Abstract}\end{center}

We consider minimization problems of the Thomas-Fermi-Dirac-von Weizs\"{a}cker (TFDW) type, in which the Newtonian potential is perturbed by a background potential satisfying mild conditions and which ensures the existence of minimizers. We describe the structure of minimizing sequences for those variants, and obtain a more precise characterization of patterns in minimizing sequences for the TFDW functionals regularized by long-range perturbations.
\end{abstract}

\pacs{}
\maketitle

\section{Introduction}

In this paper we are concerned with energy functionals which include the Thomas-Fermi-Dirac-von Weiz\"{a}cker (TFDW) model, a physical model describing ground state electron configurations of many-body systems.  
More precisely, we consider the following variational problem

\begin{align}\begin{split}\label{TheModel}I_V(M):=\inf\{\mathcal{E}_V(u):u\in\mathcal{H}^1(\RRR),\vert\vert u\vert\vert_{\mathcal{L}^2(\RRR)}^2=M\},\end{split}\end{align}
where the energy $\mathcal{E}_V$ is defined as
\begin{align}\mathcal{E}_V(u)&:=\int_{\RRR}\left[\vert\nabla u( x )\vert^2+c_1\vert u( x )\vert^{\frac{10}{3}}-c_2\vert u( x )\vert^{\frac{8}{3}}-V( x ) u^2(x) \right]d x \\
&\qquad\qquad\quad
+\frac{1}{2}\int_{\RRR}\int_{\RRR}\frac{ u^2( x ) u^2( y ) }{\vert x - y \vert}d x d y ,\end{align}
with $c_1,c_2>0$, 
\begin{align}\begin{split}\label{condiV}V\geq0,\quad V\in\mathcal{L}^{\frac{3}{2}}(\RRR)+\mathcal{L}^{\infty}(\RRR),\textit{ and }\lim_{\vert x \vert\to\infty}V( x )=0.\end{split}\end{align}
The conditions above ensure $I_V$ is finite, $\mathcal{E}_V$ is coercive in $\mathcal{H}^1(\R^3)$ on the constraint set, and 
\begin{align}\begin{split}u\in\mathcal{H}^1(\RRR)\mapsto \int_{\RRR}V( x ) u^2(x) d x \end{split}\end{align}
is weakly continuous.

The TFDW model corresponds to the choice
\begin{align}\begin{split}\label{PotentialTFDW}V_{TF}( x )=\sum_{k=1}^K\frac{\alpha_k}{\vert x - r _k\vert}, \end{split}\end{align}
with $K\in\mathbb{N}$, $\{\alpha_k\}_{k=1}^K\subset\mathbb{R}^+$ and $\{ r _k\}_{k=1}^K\subset\RRR$ all fixed. In this case, $\mathcal{E}_V(M)$ is to be thought of as the energy of a system of $M$ electrons interacting with $K$ nuclei. Each nucleus has charge $\alpha_k>0$ and it is fixed at a position $ r _k$. 
The total nuclear charge is denoted
$$   \ZT=\sum_{k=1}^K \alpha_k >0,  $$
and plays a key role in existence results
 (see the works of Frank, Nam, and Van Den Bosch~\cite{FrankNamBosch}, and Lieb~\cite{Lieb} for a survey). 

  In this paper we explore the structure of minimizing sequences for $I_V$, with $V$ chosen to be a perturbation of the molecular potential $V_{TF}$.  
Despite the coercivity of the problem, existence of a minimizer for $I_V$ is a highly nontrivial problem, due to a lack of compactness at infinity.
For the unperturbed TFDW problem, $V=V_{TF}$, Lions~\cite{Lions} proved that there exists a minimizer if $M\leq \ZT$, and Le Bris~\cite{LeBris} extended this result to $M\leq  \ZT+\epsilon$ for some $\epsilon=\epsilon(\ZT)>0$. In regard to non-existence, Nam and Van Den Bosch proved there are no minimizers if both $M$ is sufficiently large and $\ZT$ is sufficiently small, and Frank, Nam, and Van Den Bosch~\cite{NamVBosch} proved nonexistence of a minimizer for $M>\ZT+C$, for some universal $C>0$, in the case there is only one nucleus (i.e., $K=1$ in \eqref{PotentialTFDW}.)

There is a special class of potentials $V$ for which the existence problem for $I_V$ is completely understood.  We say $V$ is a \underbar{long-range} potential if it satisfies \eqref{condiV} and 
\begin{equation}\label{Long}
  \liminf_{t\to\infty}t\left(\inf_{\vert x \vert=t}V( x )\right)=\infty.
\end{equation}
For example, the homogeneous potentials $V^\nu( x )= \vert x\vert^{-\nu}$ are of long-range for $0<\nu<1$.
For long-range potentials \eqref{Long}, Alama, Bronsard, Choksi, and Topaloglu \cite{AlamaBronsardChoksiTopalogluLongRange} showed that $I_V(M)$ is attained for {\em every} $M>0$.  Thus, we may perturb the TFDW potential via a long-range potential of the form $V^\nu$, and think of this as a regularized'' version of TFDW.  We thus define a family of long-range potentials,
\begin{equation}\label{VZ}  V_Z( x )= V_{TF}( x ) + {Z\over \vert x\vert^\nu}, \qquad 0<\nu<1,  \end{equation}
with parameter $Z>0$. By taking a sequence $Z_n\to 0$ we recover the TFDW model, but via a special minimizing sequence $u_n$ composed of minimizers of the long-range problem, $\EVn(u_n)=I_{V_n}$.
A special role is played by the minimization problem $I_0$, that is with potential $V\equiv 0$, which is the ``energy at infinity'' obtained by translating $u(\cdot+x_n)$ with $|x_n|\to\infty$.  The existence properties for $I_0(M)$ are analogous to those of $I_{V_{TF}}$:  the minimizer exists for sufficiently small $M>0$ (see \cite[Lemma 9 (iii)]{NamVBosch},) and there is no minimizer for all large $M$ (see \cite{LuOttoNon-existence}.)  

It will be convenient to introduce the following set of values of the constrained mass $M$ in $I_V(M)$:
$$    \mathcal{M}_V:=\left\{ M>0 \ \bigl| \ \text{$I_V(M)$ has a minimizer $u\in \Hone$}, \ \int_{\RRR} u^2 = M\right\}.
$$
It is an open question to determine whether $\mathcal{M}_V$ is an interval, for any choice of potential $V$.  

In case $u\in\Hone$ attains the minimum in $I_V$ (respectively, $u_0\in\Hone$ attains the minimum in $I_0$), the minimizers will satisfy the PDEs,
\begin{gather}
\begin{split}\label{PDEsMin1}-\Delta u+\frac{5}{3}c_1u\vert u\vert^{\frac{4}{3}}-\frac{4}{3}c_2u\vert u\vert^{\frac{2}{3}}-Vu+\left(\vert u\vert^2\star\vert\cdot\vert^{-1}\right)u=\mu u\end{split}
\\ 
\begin{split}\label{PDEsMin2}\ \ -\Delta u_0+\frac{5}{3}c_1u_0\vert u_0\vert^{\frac{4}{3}}-\frac{4}{3}c_2u_0\vert u_0\vert^{\frac{2}{3}}+\left(\vert u_0\vert^2\star\vert\cdot\vert^{-1}\right)u_0=\mu u_0,\end{split}\end{gather}
with Lagrange multiplier $\mu$ induced by the mass constraint.  

As mentioned above, the existence question is complicated by noncompactness due to translations of mass to infinity.  However, minimizing sequences may be characterized using a general Concentration-Compactness structure (see \cite{LionsConcentrationPart1}, \cite{Lions}).


\begin{namedthm}{Concentration Theorem} 
\label{TheoremSplit} Let $\{u_n\}_{n\in\mathbb{N}}$ be a minimizing sequence for $I_V(M)$ where $V$ satisfies \eqref{condiV}. Then, there exist a number $N\in\mathbb{N}\cup\{0\}$, masses $\{m^{i}\}_{i=0}^N\subset\mathbb{R}^+$, translations $\{ x _n^0\}_{n\in\mathbb{N}}$,$\dots$,$\{ x _n^{N}\}_{n\in\mathbb{N}}\subset\RRR$, and functions $\left\{u^{i}\right\}_{i=0}^N\subset \mathcal{H}^1(\RRR)$ such that, up to a subsequence,
\begin{align}\begin{split}\label{ca1}u_n(\cdot)-\sum_{i=0}^{N}u^{i}\left(\cdot- x _n^{i}\right)\to0\textit{ in $\mathcal{H}^1(\RRR)$},\end{split}\end{align}
\begin{equation}\begin{gathered}\label{splitting1} I_V(m^{0})=\mathcal{E}_V(u^{0}),\quad I_0(m^{i})=\mathcal{E}_0(u^{i}),i>0,
\\  
\text{where} \quad \vert\vert u^{i}\vert\vert_{\mathcal{L}^2(\RRR)}^2=m^{i};
\end{gathered}\end{equation}
\begin{align}\begin{split}\label{splitting2}\sum_{i=0}^Nm^{i}=M,\quad I_V(m^0)+\sum_{i=0}^NI_0(m^{i})=I_V(M),\end{split}\end{align}
\begin{gather}\label{xblow}\vert  x _n^{i}- x _n^{j}\vert\to\infty,\quad i\neq j.
\end{gather}
The functions $u^i$ satisfy \eqref{PDEsMin1} for $i=1,\dots,N$, and $u^0$ satisfies \eqref{PDEsMin2}, each with the \underbar{same} Lagrange multiplier $\mu\le 0$.

Moreover, if $V\not\equiv0$, then we can take $ x _n^0={0}$.
\end{namedthm}


If a minimizer exists then no splitting is necessary, and there exist minimizing sequences with $N=0$.  This occurs for $V_{TF}$ when the mass is not much larger than the total charge, $M\leq \ZT + \epsilon$ (by \cite{LeBris}), for instance, or for any $M>0$ in the class of long-range potentials \eqref{Long}.  However, for TFDW with large mass $M$ we expect splitting, but the pieces resulting from noncompactness must each minimize $I_V$ or $I_0$ for its given mass, that is, 
$$ m^0\in \mathcal{M}_V, \quad m^i\in \mathcal{M}_0,i>0.  $$

The basic idea behind the result is very elegant and intuitive.  Minimizing sequences $u_n$ for $I_V(M)$ may lose compactness due to splitting into widely spaced components, each of which tends to a minimizer of $I_V$ or (for those components which translate off to infinity) $I_0$.  Asymptotically, all of the mass $M$ is accounted for by this splitting.  Although the pieces eventually move infinitely far away, they retain some information of the original minimization problem in that they share the same Lagrange multiplier.

Concentration results of this type have appeared in many papers.  For TFDW, a very similar result is outlined (although with possibly infinitely many components $u^i$,) in \cite{Lions} and a proof of the exact decomposition of energy \eqref{splitting2} for the case $V\equiv 0$ is given in \cite[Lemma 9]{NamVBosch}.  Since this Concentration Theorem is central to the statements and proof of our results we provide a proof in Appendix A.  The finiteness of the components is a result of the concavity of the energy $\EV$ for small masses, which we prove in Appendix B.

\bigskip

For perturbations of TFDW we obtain more precise information on the splitting structure.  In particular, when mass splits off to infinity, the piece which remains localized must have mass $m^0\ge \ZT$, the total nuclear charge.

\begin{Th}\label{TheoremSplitNewtonian}  Assume $V$ satisfies \eqref{condiV} and 
\begin{equation}\label{PotGeqTFDW}V( x )\geq  V_{TF}( x )=\sum_{k=1}^K\frac{\alpha_k}{\vert x - r _k\vert}, \quad \textit{ a.e. in $\RRR$},\end{equation}
for some $K\in\mathbb{N}$, $\{\alpha_k\}_{k=1}^K\subset\mathbb{R}^+$ and $\{ r _k\}_{k=1}^K\subset\RRR$.

Then, with the notation of Theorem~\ref{TheoremSplit}, for any minimizing sequence $\{u_n\}_{n\in\mathbb{N}}$ of $I_{V}(M)$, either $M\in\mathcal{M}_V$ or splitting occurs with $m^0\ge \ZT=\sum_{k=1}^K\alpha_k$.
\end{Th}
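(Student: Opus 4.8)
The plan is to apply the Concentration Theorem and argue by contradiction: suppose splitting occurs (so $N\geq 1$ and at least one $m^i>0$ for $i>0$, hence $m^0<M$) while $m^0<\ZT$, and derive a contradiction by showing that the localized piece $u^0$, which minimizes $I_V$ at mass $m^0$, could absorb additional mass from one of the pieces at infinity and strictly lower the total energy. The key structural input is that $u^0$ solves the Euler--Lagrange equation \eqref{PDEsMin1} with Lagrange multiplier $\mu\leq 0$, which is \emph{shared} with every $u^i$ solving \eqref{PDEsMin2}. First I would record the decay information: since $u^0\in\Hone$ solves \eqref{PDEsMin1} with $\mu\le 0$ and $V(x)\ge V_{TF}(x)\to 0$, the effective potential in the Schr\"odinger operator governing $u^0$ behaves like $(\ZT - m^0)/|x|$ at infinity (because $|u^0|^2\star|\cdot|^{-1}\sim m^0/|x|$ while $-V\le -\ZT/|x|$ to leading order along rays through the nuclei), so if $m^0<\ZT$ the net Coulomb tail is \emph{attractive}.

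The heart of the argument is a strict-binding / subadditivity inequality. I would aim to prove
\[
I_V(m^0 + \delta) < I_V(m^0) + I_0(\delta)
\]
for small $\delta>0$, which by \eqref{splitting2} (writing $I_V(M) = I_V(m^0)+\sum_i I_0(m^i)$ and peeling one small bump of mass $\delta\le m^{i_0}$ off a piece at infinity) contradicts the optimality of the decomposition. To establish this, I would use $u^0$ as a base state and add a small, far-away bump: but a better route, exploiting the attractive tail, is to \emph{dilate} or \emph{perturb} $u^0$ itself. Concretely, take a test function of the form $u^0 + \varphi$ where $\varphi$ is supported far out where the tail is attractive, or more robustly, test the energy with $\sqrt{1+\epsilon}\, u^0_{\lambda}$ type rescalings; the sign of the derivative of $\mathcal{E}_V$ along the mass-increasing direction is controlled by $\mu$ together with the attractive Coulomb tail, forcing $\frac{d}{d\delta}I_V(m^0+\delta)\big|_{0^+} < I_0'(\delta)\big|_{0^+}$, or more simply $\le \mu$ with strict improvement coming from the tail. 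Since the pieces at infinity satisfy \eqref{PDEsMin2} with the same $\mu\le 0$ and feel only a \emph{repulsive} Coulomb self-interaction, moving an infinitesimal amount of their mass onto $u^0$ (where it sees net attraction) strictly decreases energy.

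The main obstacle I expect is making the ``absorb mass from infinity'' comparison rigorous at the level of the infimum $I_V$, since $I_V(m^0)$ need not be differentiable in the mass and the minimizer of $I_V(m^0)$ need not be unique; I would handle this by working with explicit test functions built from $u^0$ and a translated, truncated copy of a near-minimizer for $I_0(m^{i_0})$ placed at moderate (not infinite) distance $L$, then expanding $\mathcal{E}_V$ of the glued configuration. The cross terms are: (i) the Coulomb interaction between the two lumps, $\sim m^0 m^{i_0}/L >0$ — this is the bad term — and (ii) the interaction of the far lump with $-V$, which is $\lesssim -\ZT m^{i_0}/L$ along a good ray. For the net cross term to be negative I need $m^0 < \ZT$, precisely the hypothesis; then choosing $L$ large but finite and optimizing gives $\mathcal{E}_V(\text{glued}) < I_V(m^0)+I_0(m^{i_0}) \le I_V(m^0)+\sum_i I_0(m^i) = I_V(M)$, contradicting $M\notin\mathcal{M}_V$ only if the glued function has mass $M$ — which it does, up to an $o(1)$ correction absorbed by a small rescaling, at the cost of an $o(1/L)$ energy change. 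Balancing the $1/L$ gain against this correction is the delicate quantitative point; the long-range nature of the Coulomb potential (decay exactly $1/L$, not faster) is what makes it work, and this is why the Newtonian lower bound \eqref{PotGeqTFDW} rather than a faster-decaying one is the natural hypothesis.
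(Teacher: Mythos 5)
Your core energy computation is the right one, and it is the same mechanism the paper uses: place the escaped mass at a large but finite distance $L$ from the nuclei, so that the Coulomb repulsion $m^0m^{i_0}/L$ is outweighed by the nuclear attraction $\ZT m^{i_0}/L$ (using $V\ge V_{TF}$ and $W:=V-V_{TF}\ge0$), leaving a net cross term $(m^0-\ZT)m^{i_0}/L+o(1/L)<0$ when $m^0<\ZT$. However, the way you close the contradiction does not work as written once $N\ge 2$. Since $I_0(m^i)<0$ for every $m^i>0$ (Proposition~\ref{properties1}(i)), discarding the other escaped pieces \emph{increases} the sum: $I_0(m^{i_0})\ge \sum_{i\ge1}I_0(m^i)$, so your chain $\EV(\mathrm{glued})<I_V(m^0)+I_0(m^{i_0})\le I_V(m^0)+\sum_{i\ge1} I_0(m^i)=I_V(M)$ rests on a middle inequality that is backwards. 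The ``peel off a small mass $\delta$'' variant fails for the same structural reason: it would need $I_0(\delta)+I_0(m^{i_0}-\delta)\le I_0(m^{i_0})$, i.e.\ superadditivity of $I_0$, whereas the binding inequality \eqref{binding} gives subadditivity. Two repairs are available. Either glue \emph{all} $N+1$ truncated components, placing $u^1$ at distance $\rho^2$ and the remaining components at distance $\rho^3$ so that every interaction other than the $0$--$1$ pair is $O(\rho^{-3})$, and compare directly with $I_V(M)$ using the monotonicity of $I_V$ (the glued mass is slightly below $M$); this yields $I_V(M)-\bigl[I_V(m^0)+\sum_{i=1}^N I_0(m^i)\bigr]\le (m^0-\ZT)m^1\rho^{-2}+O(\rho^{-3})<0$, contradicting \eqref{splitting2} — this is the paper's route. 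Or prove strict binding at the \emph{full} mass of one escaped piece, $I_V(m^0+m^1)<I_V(m^0)+I_0(m^1)$, and chain it with \eqref{binding} and the subadditivity of $I_0$: $I_V(M)\le I_V(m^0+m^1)+I_0(M-m^0-m^1)\le I_V(m^0+m^1)+\sum_{i\ge2}I_0(m^i)<I_V(m^0)+\sum_{i\ge1}I_0(m^i)=I_V(M)$.

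Two further points. First, the truncation step needs an actual decay estimate, and your heuristic about the attractive tail of the effective potential for $u^0$ is not a proof; the paper's mechanism is Proposition~\ref{muneg}(ii): $m^0<\ZT$ forces the common Lagrange multiplier to satisfy $\mu<0$, hence every component (they all share $\mu$) enjoys the exponential decay \eqref{expdecay}, making the truncation and mass-deficit errors $O(e^{-\lambda\rho})$, negligible against the $\rho^{-2}$ gain. (Alternatively, you can bypass decay altogether by gluing compactly supported near-minimizers of $I_V(m^0)$ and $I_0(m^i)$ rather than the limit profiles themselves.) Second, your concern about restoring the exact mass by a rescaling at cost $o(1/L)$ is unnecessary: the glued trial function has mass strictly below the target, and since $I_V$ is strictly decreasing in the mass, the comparison $I_V(\text{target})\le I_V(\|w\|_{\mathcal{L}^2(\RRR)}^2)\le\EV(w)$ already goes the right way with no correction.
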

Heuristically, this is a satisfying result:  after splitting, the nuclei should still capture as many electrons as the total nuclear charge $\ZT$.  One might expect that it should be able to retain strictly more, to form a negatively charged ion.

\medskip

 Finally, we consider in greater detail the loss of compactness which may occur for the long-range regularized families $V_n$ satisfying \eqref{formulaVn} with $0<\nu<1$.  First, minimizers of $I_{V_n}(M)$ form a minimizing sequence for $I_{TF}(M)$, so when $M$ is large compared to $\ZT$, compactness is lost and mass splits off to infinity as described in Theorem~\ref{TheoremSplit}.

\begin{Prop}\label{TSN2} Let $Z_n\to 0$ and
\begin{align}\begin{split}\label{formulaVn}
V_n({x})= V_{TF}({x})+\frac{Z_n}{\vert{x}\vert^\nu}
,\end{split}\end{align}
with $0<\nu<1$, and $\ZT=\sum_{k=0}^K \alpha_k$.  Let $u_n$ minimize $I_{V_n}(M)$, $n\in\mathbb{N}$.  
Then, 
\begin{enumerate}
\item[(i)]  $\{u_n\}_{n\in\mathbb{N}}$ is a minimizing sequence for $I_{V_{TF}}$.
\item[(ii)]  
Either $M\in\mathcal{M}_{V_{TF}}$ or splitting occurs with $m^0\geq \ZT$. 
\end{enumerate}
 \end{Prop}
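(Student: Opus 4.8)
The plan is to prove (i) by a comparison argument based on $V_n\ge V_{TF}$ together with a uniform coercivity bound, and then to obtain (ii) by applying Theorem~\ref{TheoremSplitNewtonian} with the choice $V=V_{TF}$. (The minimizers $u_n$ exist because each $V_n$ is long-range: $t\inf_{|x|=t}V_n\ge Z_n t^{1-\nu}\to\infty$.)

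For (i), note first that since $Z_n>0$ we have $V_n\ge V_{TF}$ a.e., so $\EVn(u)\le\ETF(u)$ for every admissible $u$ and hence $I_{V_n}(M)\le I_{V_{TF}}(M)<\infty$, a bound independent of $n$. The first step is to show $\{u_n\}$ is bounded in $\Hone$: for all $n$ large enough that $Z_n\le 1$ one has $V_n\le W:=V_{TF}+|\cdot|^{-\nu}$, and since $0<\nu<1$ the function $|\cdot|^{-\nu}$, and therefore $W$, lies in $\mathcal{L}^{3/2}(\RRR)+\mathcal{L}^{\infty}(\RRR)$ and vanishes at infinity, so $W$ satisfies \eqref{condiV}. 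Consequently $\mathcal{E}_W(u_n)\le\EVn(u_n)=I_{V_n}(M)\le I_{V_{TF}}(M)$, and since $\mathcal{E}_W$ is coercive on the constraint set $\{\|u\|_{\mathcal{L}^2}^2=M\}$ (by \eqref{condiV}, cf. the Introduction), this uniform upper bound forces $\{u_n\}$ to be bounded in $\Hone$; the finitely many indices with $Z_n>1$ are individually harmless. The second step: $|\cdot|^{-\nu}$ also satisfies \eqref{condiV}, so $u\mapsto\int_{\RRR}|u|^2|x|^{-\nu}\,dx$ is weakly continuous on $\Hone$, hence bounded on the $\Hone$-bounded family $\{u_n\}$, whence $Z_n\int_{\RRR}|u_n|^2|x|^{-\nu}\,dx\to 0$. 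Finally, from $\ETF(u_n)=\EVn(u_n)+Z_n\int_{\RRR}|u_n|^2|x|^{-\nu}\,dx=I_{V_n}(M)+o(1)$, together with $I_{V_n}(M)\le I_{V_{TF}}(M)$ and the trivial lower bound $\ETF(u_n)\ge I_{V_{TF}}(M)$ (as $u_n$ is admissible for $I_{V_{TF}}(M)$), one gets $\ETF(u_n)\to I_{V_{TF}}(M)$, i.e. $\{u_n\}$ is a minimizing sequence for $I_{V_{TF}}(M)$; this also shows $I_{V_n}(M)\to I_{V_{TF}}(M)$.

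For (ii), $V_{TF}$ itself satisfies \eqref{condiV} — each term $\alpha_k/|\cdot-r_k|$ lies in $\mathcal{L}^{3/2}(\RRR)+\mathcal{L}^{\infty}(\RRR)$ and the finite sum vanishes at infinity — and trivially $V_{TF}\ge V_{TF}$ a.e., so Theorem~\ref{TheoremSplitNewtonian} applies with $V=V_{TF}$. Applied to the minimizing sequence $\{u_n\}$ of $I_{V_{TF}}(M)$ furnished by (i), it gives at once that either $M\in\mathcal{M}_{V_{TF}}$ or splitting occurs (in the sense of Theorem~\ref{TheoremSplit}) with $m^0\ge\ZT$.

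The substantive work is all in part (i), and the delicate point is that the $\Hone$ bound and the decay $Z_n\int_{\RRR}|u_n|^2|x|^{-\nu}\,dx\to 0$ must be \emph{uniform} in $n$; dominating every $V_n$ (for $n$ large) by the single fixed admissible potential $W$ and invoking its coercivity is the clean way to secure this uniformity. Part (ii) then costs nothing beyond quoting Theorem~\ref{TheoremSplitNewtonian}.
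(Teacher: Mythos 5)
Your proof is correct and follows essentially the same route as the paper: dominate $V_n$ by a fixed admissible potential (the paper uses $V_{Z_1}$, you use $V_{TF}+|\cdot|^{-\nu}$) to get a uniform $\mathcal{H}^1$ bound via the energy bound/coercivity, show $Z_n\int u_n^2|x|^{-\nu}\,dx\to 0$ (the paper by an explicit H\"older--Sobolev estimate, you by weak continuity implying boundedness on bounded sets), sandwich $\ETF(u_n)$ between $I_{V_{TF}}(M)$ and $I_{V_n}(M)+o(1)$, and then quote Theorem~\ref{TheoremSplitNewtonian} for (ii). No gaps.
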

 
  The nonlocal term in $\EVn$ exerts a repulsive effect on the components $u^i$, while the vanishing long-range potential provides some degree of containment.  The combination of attractive and repulsive terms generally leads to pattern formation, at a scale determined by the relative strengths of the competitors.  This phenomenon has been identified in nonlocal isoperimetric problems (such as the Gamow liquid drop model; see \cite{CP, AlamaBronsardChoksiTopalogluDroplet}.)

However, for potentials  $V_n$ of the form \eqref{formulaVn}, the interactions between the fleeing components $u^i$ appear in the energy at order $Z_n^{\frac{1}{1-\nu}}$.  Thus,
 we require some information about the spatial decay of the minimizers of $\EVn$ away from the centers of the support in order to control the errors in an expansion of the energy in terms of $Z_n\to 0$.  In the liquid drop problems, the splitting is into characteristic functions of disjoint bounded domains, and this issue does not arise.  In order to calculate interactions we require exponential decay of the solutions, which is connected to the delicate question of the Lagrange multiplier $\mu$.  In particular, we obtain exponential decay when $\mu<0$, 
$$   |u(x)|\le C e^{-\lambda|x|},  $$
for any $0<\lambda<\sqrt{-\mu}$.
As the energy value $I_V(M)$ is strictly decreasing in $M$, we have $\mu\le 0$ and in fact 
we would expect that $\mu<0$ should hold, if not always, at least for all but a residual set of $M$.  It is an open question whether $\mu<0$ holds whenever $M\in\mathcal{M}$.  The strict negativity is known for the cases $V\equiv 0$ with sufficiently small mass, or with $V=V_{TF}$ with $M<\ZT + \kappa$ with $\kappa=\kappa(V_{TF})>0$; see Proposition~\ref{muneg}.

 We may now state our result on the distribution of masses in the case of splitting.  First, we define  
%
\begin{equation}\label{Mstar}
\mathcal{M}^*_V=\left\{ M\in\mathcal{M}_V \ : \ \text{every minimizer $u$ of $I_V(M)$ satisfies \eqref{PDEsMin1} with $\mu<0$.}
\right\}
\end{equation}

\begin{Th}\label{TheoremLocation} Let $u_n$ be minimizers of $I_{V_n}(M)$ with $V_n$ satisfying \eqref{formulaVn} with $0<\nu<1$ and $Z_n\to 0$. Let
$N$, $\{m^i\}_{i=0}^N$ and $\{ x ^0_n\}_{n\in\mathbb{N}}$,$\dots$,$\{ x ^N_n\}_{n\in\mathbb{N}}$ be as in  Theorem~\ref{TheoremSplit}.  Assume $N\ge 1$ and 
 $m^0\in\mathcal{M}_{V_{TF}}^*$. Then, up to a subsequence and relabeling, either
 \begin{enumerate}
 \item[(i)] $m^0>Z$ and
\begin{align}\begin{split}\label{convergencetoy1}Z_n^{\frac{1}{1-\nu}} x _n^{i}\to y ^{i},\quad {i=1,\dots,N,}\end{split}\end{align}
where $(y ^1,\dots, y ^{N})$ minimizes the interaction energy
\begin{equation}F_{N,(m^{0},m^1,\dots,m^{N})}(w ^1,\dots, w ^{N}):=\sum_{1\le i<j}\frac{m^{i}m^{j}}{\vert  w ^{i}- w ^{j}\vert}
+\left(m^0-\ZT\right)\sum_{i=1}^N\frac{m^i}{\vert w ^i\vert}-\sum_{i=1}^N\frac{m^{i}}{\vert  w ^{i}\vert^{\nu}}\end{equation}
over 
\begin{align}\begin{split}\Sigma_N:=\left\{(w ^1,\dots, w ^{N})\in(\mathbb{R}^3\setminus\{0\})^{N}: w ^{i}\neq w^j\right\};\end{split}\end{align}
or,
\item[(ii)] $m^0=Z$, $Z_n^{\frac{1}{1-\nu}}x_n^1\to0$ and if $N\ge 2$ we have:
\begin{align}\begin{split}\label{convergencetoy2}Z_n^{\frac{1}{1-\nu}} x _n^{i}\to y ^{i},\quad {i=2,\dots,N,}\end{split}\end{align}
where $(y ^2,\dots, y ^{N})$ minimizes the interaction energy
\begin{equation}\overline{F}_{N,(m^{1},m^2,\dots,m^{N})}(w ^2,\dots, w ^{N}):=\sum_{{2\leq i<j}}\frac{m^{i}m^{j}}{\vert  w ^{i}- w ^{j}\vert}
+m^1\sum_{i=2}^N\frac{m^i}{\vert w ^i\vert}-\sum_{i=2}^N\frac{m^{i}}{\vert  w ^{i}\vert^{\nu}}\end{equation}
over 
\begin{align}\begin{split}\overline{\Sigma}_N:=\left\{(w ^2,\dots, w ^{N})\in(\mathbb{R}^3\setminus\{0\})^{N-1}: w ^{i}\neq w^j\right\}.\end{split}\end{align}
\end{enumerate}
\end{Th}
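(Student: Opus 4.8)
The proof is a two-sided matched expansion of $I_{V_n}(M)$ in the small parameter $\lambda_n:=Z_n^{\frac1{1-\nu}}\to 0$ (note $\lambda_n^{1-\nu}=Z_n$, $Z_n\lambda_n^\nu=\lambda_n$, $\lambda_n=o(Z_n)$), whose $O(\lambda_n)$ coefficient is governed by the finite-dimensional interaction functional. \emph{Setup.} By Proposition~\ref{TSN2}(i) the $u_n$ form a minimizing sequence for $I_{V_{TF}}(M)$, so the Concentration Theorem applies: after a subsequence, $u_n=u^0+\sum_{i=1}^N u^i(\cdot-x_n^i)+r_n$ with $x_n^0=0$, $\|r_n\|_{\Hone}\to0$, $|x_n^i|\to\infty$, $|x_n^i-x_n^j|\to\infty$, $u^0$ a minimizer of $I_{V_{TF}}(m^0)$, $u^i$ a minimizer of $I_0(m^i)$ ($i\ge1$) with common multiplier $\mu\le0$, $\sum m^i=M$, and the exact splitting $I_{V_{TF}}(m^0)+\sum_{i\ge1}I_0(m^i)=I_{V_{TF}}(M)$; by Proposition~\ref{TSN2}(ii), $m^0\ge\ZT$. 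Because $m^0\in\mathcal M^*_{V_{TF}}$ we have $\mu<0$, and since $u_n$ solves \eqref{PDEsMin1} with $V=V_n$ and multiplier $\mu_n$, one checks $\mu_n\to\mu<0$, whence uniform exponential decay $|u_n(x)|\le C\sum_{i=0}^N e^{-\lambda|x-x_n^i|}$ (and likewise for each $u^i$). Write $\rho^0:=(u^0)^2$. The exponential decay plus the exact splitting are what let us control all error terms to order $o(\lambda_n)$.

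\emph{Upper bound.} For $(w^1,\dots,w^N)$ in a compact subset of $\Sigma_N$, truncate each $u^i$ at radius $R_n=|\log\lambda_n|^2=o(\lambda_n^{-1})$, renormalize the $\mathcal L^2$ norms, and form the trial function placing the localized piece at the origin and $u^i$ at $\lambda_n^{-1}w^i$. By the decay the pieces have disjoint supports, so the purely local terms of $\mathcal E_{V_n}$ sum to $\mathcal E_{V_{TF}}(u^0)+\sum_{i\ge1}\mathcal E_0(u^i)+o(\lambda_n)=I_{V_{TF}}(M)+o(\lambda_n)$; the $-Z_n|x|^{-\nu}$ term on the origin piece contributes $-Z_n\int\rho^0|x|^{-\nu}+o(\lambda_n)$, the only term of order $Z_n$ and independent of $w$; and each interaction — the Coulomb cross terms, the tail $-V_{TF}(y)=-\ZT|y|^{-1}+O(|y|^{-2})$ felt by the far pieces, and $-Z_n|x|^{-\nu}$ on the far pieces — equals $\lambda_n$ times the corresponding term of $F_{N,(m^0,\dots,m^N)}(w)$ up to $o(\lambda_n)$, via $|{\lambda_n^{-1}w}+z|^{-1}=\lambda_n|w|^{-1}+O(\lambda_n^2)$ and the exponent identities above; a small mass-restoring dilation costs $o(\lambda_n)$. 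When $m^0>\ZT$, infimizing over $w$,
\begin{equation}\label{eq:plan-ub}
 I_{V_n}(M)\le I_{V_{TF}}(M)-Z_n\int_{\RRR}\frac{\rho^0}{|x|^\nu}\,dx+\lambda_n\inf_{\Sigma_N}F_{N,(m^0,\dots,m^N)}+o(\lambda_n).
\end{equation}

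\emph{Lower bound, matching, case (i).} Cut $u_n$ by disjointly supported bumps $\chi_n^i$ of radius $\sim\tfrac13\min_{i\ne j}|x_n^i-x_n^j|$, so $u_n=\sum_i\chi_n^iu_n+v_n$ with $v_n$ supported between clusters and exponentially small; then $\mathcal E_{V_n}(u_n)$ exceeds $\sum_i\mathcal E_{V_n}(\chi_n^iu_n)$ plus the Coulomb cross terms, minus errors exponentially small in the cluster separation. Bounding $\mathcal E_{V_n}(\chi_n^iu_n)$ below through $I_{V_{TF}},I_0$ and upgrading the $\Hone$-convergence of $\chi_n^iu_n$ to $u^i(\cdot-x_n^i)$ using the uniform decay gives, with $\xi_n^i:=\lambda_n x_n^i$,
\begin{equation}\label{eq:plan-lb}
 I_{V_n}(M)=\mathcal E_{V_n}(u_n)\ge I_{V_{TF}}(M)-Z_n\int_{\RRR}\frac{\rho^0}{|x|^\nu}\,dx+\lambda_n\big[F_{N,(m^0,\dots,m^N)}(\xi_n^1,\dots,\xi_n^N)-o(1)\big]
\end{equation}
as long as the $\xi_n^i$ stay away from $0$ and from one another; a collision $\xi_n^i\to\xi_n^j$ or (when $m^0>\ZT$) $\xi_n^i\to0$ forces the bracket to $+\infty$, while if some $\xi_n^i\to\infty$ its terms in $F$ vanish and one compares with the reduced functional, whose infimum is strictly larger (adding a far piece at distance $R$ lowers the energy by $\sim m^iR^{-\nu}$). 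Combining \eqref{eq:plan-ub}--\eqref{eq:plan-lb} when $m^0>\ZT$ yields $\limsup_nF_{N,(m^0,\dots,m^N)}(\xi_n)\le\inf_{\Sigma_N}F$; the degeneracy analysis then confines $\{\xi_n\}$ to a fixed compact subset of $\Sigma_N$ and makes it a minimizing sequence for $F$, which by the direct method — using $F<0$ somewhere, $F\ge0$ when all $|w^i|$ are large, $F\to+\infty$ near the singular set, and induction on $N$ to exclude partial escape — converges (up to a subsequence) to a minimizer $(y^1,\dots,y^N)$ of $F$. This is case (i).

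\emph{Case (ii).} If $m^0=\ZT$ then $\inf_{\Sigma_N}F=-\infty$ (push one $w^i\to0$), so no $\xi_n^i$ can stay bounded away from $0$: otherwise \eqref{eq:plan-lb} holds with a finite bracket, contradicting a trial function sending one piece toward $0$; and at most one $\xi_n^i$ can approach $0$, since two would make their mutual Coulomb energy diverge. Relabel so $\xi_n^1\to0$. On the scale $\lambda_n^{-1}$ the core $u^0+u^1(\cdot-x_n^1)$ concentrates at the origin, and the effective charge it presents to the far pieces is $-\ZT$ (nuclei) $+\,m^0$ ($u^0$) $+\,m^1$ ($u^1$) $=m^1$; rerunning the two-sided expansion with $\{u^0,u^1\}$ as an $n$-dependent core — whose energy is independent of $\xi_n^2,\dots,\xi_n^N$ to order $o(\lambda_n)$ — replaces $F$ by $\overline F_{N,(m^1,\dots,m^N)}$ on $\overline\Sigma_N$, which is proper since $m^1>0$, and the same matching plus compactness give $\xi_n^i\to y^i$ minimizing $\overline F$ for $i\ge2$. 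The main obstacle is the lower bound \eqref{eq:plan-lb}: turning the merely-$\Hone$ splitting of the Concentration Theorem into energy control of order $o(\lambda_n)$ forces one to work with the exponential decay of the genuine minimizers $u_n$ (hence to establish $\mu_n\to\mu<0$), and to carry out the degenerate-position analysis that separates cases (i) and (ii).
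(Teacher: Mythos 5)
Your overall strategy is the right one (localize the components, match an upper and a lower bound at the scale $Z_n^{\frac{1}{1-\nu}}$, reduce to the finite-dimensional functionals $F$ and $\overline F$, and invoke compactness of their minimizing sequences), and your upper bound \eqref{eq:plan-ub}, built from truncations of the limit profiles $u^i$, is sound. The genuine gap is in the lower bound \eqref{eq:plan-lb} and hence in the matching step. When you cut $u_n$ itself, the term of order $Z_n$ that appears is $-Z_n\int |\chi_n^0 u_n|^2\,|x|^{-\nu}dx$, not $-Z_n\int \rho^0 |x|^{-\nu}dx$: to pass from one to the other you need
$$\Bigl|\int_{\RRR}\frac{|\chi_n^0u_n|^2}{|x|^{\nu}}dx-\int_{\RRR}\frac{(u^0)^2}{|x|^{\nu}}dx\Bigr|=o\bigl(Z_n^{\frac{\nu}{1-\nu}}\bigr),$$
i.e.\ a quantitative rate for the local convergence $u_n\to u^0$, which the Concentration Theorem does not provide (it gives only $o(1)$, with no rate). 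Since $Z_n\gg Z_n^{\frac{1}{1-\nu}}$, an uncontrolled $o(1)\cdot Z_n$ mismatch between the two bounds can completely swamp the $O(Z_n^{\frac{1}{1-\nu}})$ interaction terms you are trying to compare, so the conclusion that $(\xi_n^1,\dots,\xi_n^N)$ is a minimizing sequence for $F$ does not follow from \eqref{eq:plan-ub}--\eqref{eq:plan-lb} as stated. (The $O(1)$ local energies are fine — the binding inequality plus the exact splitting identity handle them without a rate — the problem is solely the $O(Z_n)$ term; and it reappears verbatim in your treatment of case (ii), where the core again mixes limit profiles in one bound with truncations of $u_n$ in the other.)

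The paper's proof is organized precisely to avoid ever expanding $I_{V_n}(M)$ to precision $o(Z_n^{\frac{1}{1-\nu}})$ against the limit profiles. Both bounds are written in terms of the \emph{same} truncated pieces $G_n^i=\chi_{\rho_n}(\cdot-x_n^i)u_n$ of the actual minimizer: the lower bound (Lemma~\ref{LowerBoundd}) expands $\EVn(u_n)$ with centers $x_n^i$, while the upper bound (Lemma~\ref{UpperBound}) uses the trial function $w_n=\sum_i H_n^i$, where $H_n^i$ are the \emph{same} functions $G_n^i$ merely translated to test positions $q_n^i$, together with the strict monotonicity $I_{V_n}(M)<\EVn(w_n)$. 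Consequently the terms $\ETF(G_n^0)+\sum_i\Ez(G_n^i)-Z_n\int |G_n^0|^2|x|^{-\nu}dx$ cancel identically in \eqref{LowerUpperBound}, and what survives is a comparison of the interaction energies at $\{x_n^i\}$ versus $\{q_n^i\}$ with errors $\eps_{1,n}+\eps_{2,n}$ that are explicitly controlled by $\rho_n/R_n^2$, $Z_n\rho_n/R_n^{1+\nu}$ and $e^{-\frac{\sqrt{-\mu}}{2}\rho_n}$ — no convergence rate for $u_n\to u^i$ is needed anywhere, only Lemma~\ref{limitmass} ($m_n^i\to m^i$) and the uniform exponential decay of Lemma~\ref{ExpDecayun}. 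To repair your argument you would have to make the same move: use the localized $u_n$-pieces (recentred) as the core and far components of your trial function, rather than the truncated limit profiles, so that the $O(1)$ and $O(Z_n)$ terms cancel structurally instead of being estimated.
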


\begin{Rmk}
\begin{enumerate}
\item 
The degenerate case $m^0=\mathcal{Z}$ is very delicate, as the term measuring the repulsion between the weakly convergent component  supported near zero and the diverging pieces is nearly exactly cancelled by the attractive effect of the nuclear potential $V_{TF}$.  Thus, the error terms in the expansion of the energy may exceed the principal term creating a net repulsion (or attraction) to the nuclei which is difficult to estimate.  For instance, if $N=1$ and only one component splits to infinity then all we can say when $m^0=\mathcal{Z}$ is that it diverges at a rate much slower than $Z^{-{1\over 1-\nu}}$.  In some sense, there is no natural scale for its interaction distance to the nuclei.  For this reason, we believe that in fact $m^0>\mathcal{Z}$, but have no proof of this conjecture.
\item If $m^0=\ZT$, then $m^0\in\mathcal{M}_{V_{TF}}^*$ automatically (see Proposition \ref{muneg} (ii).)
\item The proof of the compactness of all minimizing sequences of $\inf F_{N,(m^{0},m^1,\dots,m^{N})}$ and $\inf \overline{F}_{N,(m^{1},m^2,\dots,m^{N})}$ follows with little modification from the proof of  \cite[Proposition 8]{AlamaBronsardChoksiTopalogluDroplet}.
\item  By Theorem~\ref{TheoremSplit}, each of the components $u^i$ shares the same Lagrange multiplier $\mu$, and hence it suffices that any one of the components satisfy \eqref{PDEsMin1} with $\mu<0$.
\item We do not know whether the condition $\mu<0$ could be improved. We use $\mu<0$ for uniform exponential decay of the functions $u_n$ away from $ x _n^i$, but some weaker uniform decay away from the mass centers may be sufficient. However, it is unclear how rapidly minimizers of \eqref{TheModel} decay when $\mu=0$.
\end{enumerate}\end{Rmk}

Finally, we note that the specific choice of powers $p=\frac{10}{3}$ and $q=\frac83$ in the nonlinear potential well $W(u)=c_1 |u|^p - c_2 |u|^q$ are physically appropriate for the TFDW model, but from the point of view of analysis other choices are possible.  Indeed, most of the results of this paper may be extended to the case $2<q<3$ and $q<p<6$.  However, for $q>3$ the behavior of minimizers may be substantially different:  in such case $I_V(M)=I_V^{p,q}(M)$ may vanish identically, and minimizers may never exist for any $M>0$.  (See Lions~\cite{Lions} for various examples.)  Thus, it is not sufficient to have potentials $W$ with a ``double well'' structure to observe the properties of TFDW minimizers; the relationship between the powers appearing in the functional is of importance, as well.

\section{Boundedness and decay of minimizers}

In this section we prove various basic properties of $I_V(M)$ and its minimizers, and we discuss the role of the Lagrange multiplier in the decay of solutions.

The following properties are well-known for variational problems of the form \eqref{TheModel}:

\begin{Prop}\label{properties1}  Let $V$ satisfy \eqref{condiV}.
\begin{enumerate}
\item[(i)] 
For any $M>0$, $I_V(M)<0$, and is strictly decreasing in $M$.  
\item[(ii)] The following ``binding inequality'' holds for any $0<m<M$:
\begin{equation}\label{binding}   I_V(M)\le I_V(m) + I_0(M-m).  
\end{equation}
\item[(iii)]
If $I_V(M)$ is attained at $u\in\Hone$, then $u$ solves \eqref{PDEsMin1} with Lagrange multiplier $\mu\le 0$ and we may take $u\geq0$ in $\RRR$. It is possible to choose $u>0$ if $V=V_{TF}$ or $V=V_Z$ as defined in \eqref{VZ}.
\end{enumerate}
\end{Prop}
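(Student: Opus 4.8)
The plan is to run the classical concentration--compactness machinery for TFDW-type functionals, proving: negativity of $I_V$, then the binding inequality \eqref{binding}, then monotonicity of $I_V$, and finally the assertions about minimizers. For the negativity, fix a nonnegative profile $u_0\in C_c^\infty(\RRR)$ with $\|u_0\|_{\mathcal{L}^2(\RRR)}^2=M$ chosen so that $c_2\int_{\RRR}|u_0|^{8/3}>\tfrac12\iint_{\RRR\times\RRR}\frac{u_0^2(x)u_0^2(y)}{|x-y|}\,dx\,dy$; such profiles exist, for instance $u_0$ supported in a thin spherical annulus of large radius, for which the Coulomb self-energy is small compared with $\int|u_0|^{8/3}$ at fixed mass. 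With $A:=\int|\nabla u_0|^2+c_1\int|u_0|^{10/3}$, $D:=\int|u_0|^{8/3}$, $E:=\tfrac12\iint\frac{u_0^2u_0^2}{|x-y|}$, and $C_t:=\int_{\RRR}V(y/t)\,u_0^2(y)\,dy\ge0$, the mass-preserving dilation $u_{0,t}(x):=t^{3/2}u_0(tx)$ satisfies
\[\EV(u_{0,t})=t^2A-t\,(c_2D-E)-C_t.\]
Since $c_2D-E>0$ and $C_t\ge0$ (using $V\ge0$), taking $t\to0^+$ makes the right-hand side negative, so $I_V(M)<0$. This choice of trial function is the only place a little thought is needed, since a careless profile may have Coulomb energy exceeding the $c_2$-term.

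For the binding inequality, given $0<m<M$ choose $\eps$-minimizers $v$ of $I_V(m)$ and $w$ of $I_0(M-m)$ and set $\psi_n:=v+w(\cdot-z_n)$ with $|z_n|\to\infty$. Then $\|\psi_n\|_{\mathcal{L}^2}^2\to M$ (the cross term vanishes), the $\mathcal{L}^p$ contributions for $p=\tfrac{10}{3},\tfrac83$ are asymptotically additive, the cross Coulomb integral $\iint\frac{v^2(x)w^2(y-z_n)}{|x-y|}\to0$, and $\int_{\RRR}V\,w^2(\cdot-z_n)\to0$ because $u\mapsto\int Vu^2$ is weakly continuous and $w(\cdot-z_n)\weak0$ in $\Hone$; a small renormalization of $\psi_n$ restores the constraint at vanishing energy cost. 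Passing to the limit gives $I_V(M)\le\EV(v)+\Ez(w)\le I_V(m)+I_0(M-m)+2\eps$, i.e.\ \eqref{binding}. Monotonicity of $I_V$ then follows: for $M'>M$, \eqref{binding} (with $M'$ in place of $M$ and $m=M$) yields $I_V(M')\le I_V(M)+I_0(M'-M)<I_V(M)$ since $I_0(M'-M)<0$ by the negativity above applied with $V\equiv0$.

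For (iii), the functional $\EV$ is of class $C^1$ on $\Hone$ (the power terms by $\Hone\hookrightarrow\mathcal{L}^p$, $2\le p\le6$; the Coulomb term by Hardy--Littlewood--Sobolev; the $V$-term by \eqref{condiV}) and $\{\|u\|_{\mathcal{L}^2}^2=M\}$ is a $C^1$ submanifold, so the Lagrange multiplier rule provides $\mu\in\R$ for which $u$ solves \eqref{PDEsMin1}. Testing \eqref{PDEsMin1} against $u$ and comparing with $\sigma\mapsto\EV(\sigma u)$ shows $\frac{d}{d\sigma}\big|_{\sigma=1}\EV(\sigma u)=2\mu M$; but $\EV(\sigma u)\ge I_V(\sigma^2M)\ge I_V(M)=\EV(u)$ for $0<\sigma\le1$ by the monotonicity just proved, so the smooth function $\sigma\mapsto\EV(\sigma u)$ has a one-sided minimum at $\sigma=1$ from the left and its derivative there is $\le0$; hence $\mu\le0$. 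Moreover $\EV(|u|)\le\EV(u)$ with $\||u|\|_{\mathcal{L}^2}^2=M$, so $|u|$ is again a minimizer and we may take $u\ge0$.

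Finally, when $V=V_{TF}$ or $V=V_Z$ as in \eqref{VZ}, one has $V\in\mathcal{L}^p_{\mathrm{loc}}(\RRR)$ for some $p>\tfrac32$, and, using $u^2\star|\cdot|^{-1}\in\mathcal{L}^\infty(\RRR)$, a one-step elliptic bootstrap in \eqref{PDEsMin1} gives $u\in\mathcal{L}^\infty_{\mathrm{loc}}(\RRR)\cap C(\RRR)$. Writing the equation as $-\Delta u+W^+u=W^-u\ge0$ with $W:=\tfrac53c_1u^{4/3}-\tfrac43c_2u^{2/3}-V+(u^2\star|\cdot|^{-1})-\mu$ and observing that $W^+\le\tfrac53c_1u^{4/3}+\|u^2\star|\cdot|^{-1}\|_{\mathcal{L}^\infty}+|\mu|\in\mathcal{L}^\infty_{\mathrm{loc}}(\RRR)$ (where $V\ge0$ is used), the strong maximum principle forces $u>0$ throughout $\RRR$. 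For a general $V\in\mathcal{L}^{3/2}(\RRR)+\mathcal{L}^\infty(\RRR)$ this bootstrap need not close, which is precisely why strict positivity is asserted only for $V_{TF}$ and $V_Z$; that regularity point, together with the choice of trial profile in the negativity step, are the only nontrivial ingredients---the rest is routine.
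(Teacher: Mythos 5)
Your proposal is correct and follows essentially the same route as the paper, which simply delegates (i)--(ii) to the standard arguments of Nam--Van Den Bosch (scaling trial functions for negativity, widely separated superpositions for binding and monotonicity) and sketches (iii) via the Euler--Lagrange equation, monotonicity for $\mu\le 0$, replacement of $u$ by $|u|$, and Harnack/strong maximum principle for positivity when $V=V_{TF}$ or $V_Z$. You have merely written out in detail the steps the paper cites, and the details (annulus trial profile, vanishing cross terms, $\frac{d}{d\sigma}\big|_{\sigma=1}\mathcal{E}_V(\sigma u)=2\mu M$, local boundedness for the maximum principle) all check out.
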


\begin{proof} Statements (i) and (ii) can be proven as Lemma 5 was in Nam and Van Den Bosch \cite{NamVBosch}. In regard to (iii), \eqref{PDEsMin1} corresponds to the Euler-Lagrange equation associated with $I_V(M)$, while $\mu\le 0$ due to $I_V(M)$ being decreasing in $M$. We may take $u\geq0$ in $\RRR$ as $I_V(M)=\mathcal{E}_V(u)=\mathcal{E}(\vert u\vert)$.  Finally, for potentials of the form $V_{TF}$ or as perturbed in \eqref{VZ} the positivity of minimizers follows from the Harnack inequality.\end{proof}

When the Lagrange multiplier $\mu<0$ we obtain exponential decay (see (66) in Lions \cite{Lions}):  for all $0<\lambda<\sqrt{-\mu}$, there exists a constant $C$ with
\begin{equation} \label{expdecay}
|u(x)| + |\nabla u(x)|\le  Ce^{-\lambda x}, \quad\textit{a.e. in }\RRR
\end{equation}
A categorization of the potentials $V$ and masses $M$ for which $\mu<0$ remains an important open question.  The following proposition gives various criteria under which the Lagrange multiplier $\mu<0$.
We recall the definition of $\mathcal{M}_{V}^*$ in \eqref{Mstar}.

\begin{Prop}\label{muneg} 
\begin{enumerate}
\item[(i)]  For $V\equiv 0$, $\exists M_0>0$ so that if $M<M_0$ then $M\in\mathcal{M}_{V}^*$;
\item[(ii)] For $V\geq V_{TF}$ satisfying \eqref{condiV}, $\exists \kappa=\kappa(\ZT)>0$ so that if $M<\ZT+\kappa$ then $M\in\mathcal{M}_{V_{TF}}^*$;
\item[(iii)]    For $V$ with long-range decay \eqref{Long}, every $M\in \mathcal{M}^*_V$.  
\item [(iv)]  For $V$ satisfying \eqref{condiV} such that
\begin{align}\label{infimumisnega}E:=\inf\left\{\int_{\RRR}(\vert\nabla u\vert^2-Vu^2)dx : u\in\mathcal{H}^1(\RRR),\vert\vert u\vert\vert_{\mathcal{L}^2(\RRR)}=1\right\}<0,\end{align}
there exists $M_V>0$ so that if $M<M_V$ then $M\in\mathcal{M}_{V}^*$;
\item[(v)] For\begin{align}\begin{split}\label{Valpha}V( x )=\sum_{k=1}^K\frac{\alpha_k}{\vert x - r _k\vert^\tau},\textit{ a.e. in $\RRR$,}\end{split}\end{align}
with $\{\alpha_k\}_{k=1}^K\subset\mathbb{R}^+$, $\{ r _k\}_{k=1}^K\subset\RRR$, and $0<\tau<2$, there exists $M_V>0$ so that if $M<M_V$ then $M\in\mathcal{M}_{V}^*$
\end{enumerate}
\end{Prop}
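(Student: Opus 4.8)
The argument rests on two elementary identities for a minimizer $u$ of $I_V(M)$ with multiplier $\mu\le0$. Writing $D(u):=\int_{\RRR}\!\int_{\RRR}\frac{u^2(x)u^2(y)}{|x-y|}\,dx\,dy$, testing the Euler--Lagrange equation \eqref{PDEsMin1} (which reduces to \eqref{PDEsMin2} when $V\equiv0$) against $u$ and subtracting the energy identity $\EV(u)=I_V(M)$ makes $\int Vu^2$ cancel and yields
\[
\mu M \;=\; I_V(M)\;+\;\tfrac23 c_1\|u\|_{\mathcal{L}^{10/3}}^{10/3}\;-\;\tfrac13 c_2\|u\|_{\mathcal{L}^{8/3}}^{8/3}\;+\;\tfrac12 D(u).
\]
Moreover $t\mapsto\EV(\sqrt t\,u)$ is smooth on $t>0$, equals $I_V(M)$ at $t=1$, and (again by testing \eqref{PDEsMin1} against $u$) has derivative $\mu M$ at $t=1$; hence, expanding at $t=1$ with $t=(M-m)/M$, $I_V(M-m)\le\EV\bigl(\sqrt{(M-m)/M}\,u\bigr)=I_V(M)-\mu m+O(m^2)$ for bounded $0<m<M$. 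I shall also use the trial-function bounds $I_0(m)\le -c\,m^{5/3}$ and (when $E<0$) $I_V(m)\le\tfrac E2 m$ for small $m>0$, obtained by evaluating $\Ez$, resp.\ $\EV$, on $\sqrt m\,\lambda^{3/2}w(\lambda\cdot)$ and optimizing $\lambda$; the Gagliardo--Nirenberg inequalities $\|u\|_{\mathcal{L}^{8/3}}^{8/3}\le C\|\nabla u\|_{\mathcal{L}^2}\|u\|_{\mathcal{L}^2}^{5/3}$, $\|u\|_{\mathcal{L}^{10/3}}^{10/3}\le C\|\nabla u\|_{\mathcal{L}^2}^{2}\|u\|_{\mathcal{L}^2}^{4/3}$, $D(u)\le C\|\nabla u\|_{\mathcal{L}^2}\|u\|_{\mathcal{L}^2}^{3}$; and the form bound $\int Vu^2\le\tfrac12\|\nabla u\|_{\mathcal{L}^2}^2+C_V\|u\|_{\mathcal{L}^2}^2$, valid because $V\in\mathcal{L}^{3/2}+\mathcal{L}^\infty$.

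\emph{Small mass: (i), (iv), (v).} For small $M$ a minimizer exists (for $V\equiv0$ by \cite[Lemma~9(iii)]{NamVBosch}; in the other cases equally standard). From $\EV(u)=I_V(M)<0$, the form bound and Gagliardo--Nirenberg give $\|\nabla u\|_{\mathcal{L}^2}^2=O(M)$ when $E<0$, and $\|\nabla u\|_{\mathcal{L}^2}^2=O(M^{5/3})$ with $\|u\|_{\mathcal{L}^{8/3}}^{8/3}=\Theta(M^{5/3})$ when $V\equiv0$; then $\|u\|_{\mathcal{L}^{10/3}}^{10/3}$ and $D(u)$ are of strictly lower order. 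Substituting into the first identity, all terms on the right except $I_V(M)$ --- and, in case~(i), the negative term $-\tfrac13 c_2\|u\|_{\mathcal{L}^{8/3}}^{8/3}\sim M^{5/3}$ --- are negligible, so $\mu M\le\tfrac E2 M+o(M)<0$ in cases~(iv)--(v), and $\mu M=I_0(M)-\tfrac13 c_2\|u\|_{\mathcal{L}^{8/3}}^{8/3}+o(M^{5/3})<0$ in case~(i). For~(v), $V(x)=\sum_k\alpha_k|x-r_k|^{-\tau}$ with $0<\tau<2$ satisfies \eqref{condiV} (its singularities lie in $\mathcal{L}^{3/2}$ exactly because $\tau<2$), and $E<0$ since for the spread-out bumps $w_\sigma(x)=\sigma^{-3/2}w(\sigma^{-1}(x-r_k))$ the term $\int Vw_\sigma^2\sim\sigma^{-\tau}$ dominates $\int|\nabla w_\sigma|^2\sim\sigma^{-2}$ as $\sigma\to\infty$; then~(iv) applies.

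\emph{A reduction, and case (ii).} The engine for (ii)--(iii) is: \emph{if $I_V(M)<I_V(M-m)+I_0(m)$ (strict binding) for all sufficiently small $m>0$, then $\mu<0$}. Indeed, were $\mu=0$, the second identity would give $I_V(M-m)\le I_V(M)+O(m^2)$, so strict binding forces $I_0(m)>-O(m^2)$, contradicting $I_0(m)\le -c\,m^{5/3}$ once $m$ is small. (This uses no decay information on $u$, which is essential since the decay rate at $\mu=0$ is unknown.) For $M\le\ZT$ I would prove strict binding for all $m\in(0,M)$ with a two-piece trial function, truncated to compact support at negligible energy cost: a near-minimizer of $I_{V_{TF}}(M-m)$ near the nuclei plus a near-minimizer of $I_0(m)$ translated to distance $R$; all cross terms vanish but the Coulomb ones, which contribute $\frac{m(M-m-\ZT)}{R}+o(1/R)$, and since $M\le\ZT$ makes $M-m-\ZT<0$, taking $R$ large puts the trial energy strictly below $I_{V_{TF}}(M-m)+I_0(m)$. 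Hence $\mu<0$ for all $M\le\ZT$; in particular $\ZT\in\mathcal{M}^*_{V_{TF}}$. For $\ZT<M<\ZT+\kappa$ a minimizer exists by \cite{LeBris}; if $\mu<0$ failed along some $M_j\downarrow\ZT$ with minimizers $u_j$ (so $\mu_j=0$), then $\{u_j\}$ is bounded in $\Hone$ and cannot shed mass at infinity --- this would violate the strict binding at $M=\ZT$ just proved --- so $u_j\to u_*$ strongly, a minimizer of $I_{V_{TF}}(\ZT)$ with $\mu_*=\lim\mu_j=0$, contradicting the $M=\ZT$ case; this furnishes the required $\kappa=\kappa(\ZT)$. (The hypothesis $V\ge V_{TF}$ only serves to fix $\ZT$; the assertion is about $I_{V_{TF}}$.)

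\emph{Case (iii), and the main difficulty.} For long-range $V$, minimizers exist for every $M>0$ by \cite{AlamaBronsardChoksiTopalogluLongRange}, and strict binding holds for \emph{all} $m\in(0,M)$: in the two-piece trial function the distant piece of mass $m$ feels $-\int Vu_2^2\le -m\bigl(\inf_{|x|=R}V\bigr)\bigl(1+o(1)\bigr)$, and since $R\inf_{|x|=R}V\to\infty$ by \eqref{Long}, this dominates the Newtonian repulsion $\le \frac{Mm}{R}$ for $R$ large, once more dropping the trial energy below $I_V(M-m)+I_0(m)$. The reduction then yields $\mu<0$ for every $M>0$, i.e.\ $\mathcal{M}^*_V=\mathcal{M}_V$. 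The routine parts of all this are the Gagliardo--Nirenberg bookkeeping of the small-mass case and the trial-function estimates; the genuinely delicate point is case~(ii) for $M\ge\ZT$, where the absence of any usable decay estimate at $\mu=0$ forces the argument through strict binding and compactness, and where the degeneracy $M-m-\ZT=0$ at $M=\ZT$, $m\to0$, is precisely what makes the direct trial-function argument fail just above $\ZT$ and necessitates the limiting step.
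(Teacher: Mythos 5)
Your treatment of the small-mass cases is essentially sound and close in spirit to the paper's: for (i) the paper combines the Euler--Lagrange identity with the scaling (virial) identity and the Hardy--Littlewood/interpolation bound $D(u)\le CM^{2/3}\|u\|_{\mathcal{L}^{8/3}}^{8/3}$, while you reach the same conclusion through Gagliardo--Nirenberg bookkeeping; (v) is reduced to (iv) by exactly the scaling the paper uses; and your reduction ``strict binding for arbitrarily small $m$, together with $I_0(m)\le -c\,m^{5/3}$, forces $\mu<0$'' is correct. (A minor caveat: membership in $\mathcal{M}^*_V$ includes existence of a minimizer, which for (iv)--(v) at small mass you dismiss as ``equally standard''; the paper obtains it from Lions' Corollary II.2, and for (i) from Nam--Van Den Bosch.)

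The genuine gap is in the strict binding inequalities on which your proofs of (ii) and (iii) entirely rest. In your two-piece construction the gain from the cross terms is $m(\ZT-M+m)/R+o(1/R)$ in case (ii), and at most $m\inf_{|x|\simeq R}V-Mm/R$ in case (iii), which tends to $0$ as $R\to\infty$ because $V$ vanishes at infinity; by contrast, the error committed by replacing a minimizer of $I_{V_{TF}}(M-m)$ (resp.\ $I_V(M-m)$) by a compactly supported $\epsilon$-near-minimizer is of size $\epsilon$ \emph{independent of $R$}, and $R$ is forced to exceed the diameter of that support, which blows up as $\epsilon\to0$ at an uncontrolled rate. With this order of quantifiers the construction only reproduces the non-strict inequality \eqref{binding}, never the strict one. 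Making the errors $o(1/R)$ requires a quantitative (e.g.\ exponential) decay rate for an actual minimizer at mass $M-m$, i.e.\ negativity of \emph{its} multiplier --- precisely the statement being proved; as you yourself observe, no usable decay is available when $\mu=0$, so the step is circular. The limiting argument for $\ZT\le M<\ZT+\kappa$ and all of case (iii) inherit this gap, since both invoke the same strict binding claim. The paper does not attempt this route: (ii) is quoted from Le Bris and (iii) from Alama--Bronsard--Choksi--Topaloglu, whose proofs avoid the quantifier problem by different means (roughly, surgery on an actual minimizing sequence, so that the cut-and-paste errors vanish along the sequence while the gain at a fixed finite relocation distance stays bounded away from zero, or spectral/PDE analysis of the Euler--Lagrange equation). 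To salvage your approach you would need either such a sequence-based comparison or an independent proof that the multiplier is negative at mass $M-m$.
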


\begin{proof}
To verify (i), suppose that $u$ is a minimizer. Equation \eqref{PDEsMin2} corresponds to the Euler-Lagrange equation associated with $I_0(M)$. Regarding the strict negativity of $\mu$, note that from \eqref{PDEsMin2}, 
\begin{multline}\label{eqmuM}\mu  M=\int_{\RRR}\vert\nabla  u ( x )\vert^2d x +\frac{5}{3}c_1\int_{\RRR}\vert  u ( x )\vert^{\frac{10}{3}}d x -\frac{4}{3}c_2\int_{\RRR}\vert  u ( x )\vert^{\frac{8}{3}}d x \\
+\int_{\RRR}\int_{\RRR}\frac{ u^2( x ) u^2( y ) }{\vert x - y \vert}d x d y.\end{multline}
Moreover, since $I_0(M)=\mathcal{E}_0(u)$ and $\vert\vert \sigma^{\frac{3}{2}}u(\sigma\cdot)\vert\vert_{\mathcal{L}^2(\RRR)}^2=M$ for all $\sigma>0$,
\begin{align}\begin{split}\label{derivativezero}0&=\left.\frac{d}{d\sigma}[\mathcal{E}_0(\sigma^{\frac{3}{2}} u (\sigma\cdot))]\right\vert_{\sigma=1}\\
&=2\int_{\RRR}\vert\nabla  u ( x )\vert^2d x +2c_1\int_{\RRR}\vert  u ( x )\vert^{\frac{10}{3}}d x -c_2\int_{\RRR}\vert  u ( x )\vert^{\frac{8}{3}}d x+\frac{1}{2}\int_{\RRR}\int_{\RRR}\frac{ u^2( x ) u^2( y ) }{\vert x - y \vert}d x d y ,\end{split}\end{align}
or, equivalently, 
\begin{multline}\label{gres}\frac{5}{3}c_1\int_{\RRR}\vert  u ( x )\vert^{\frac{10}{3}}d x =  -\frac{5}{3}\int_{\RRR}\vert\nabla  u ( x )\vert^2d x +\frac{5}{6}c_2\int_{\RRR}\vert  u ( x )\vert^{\frac{8}{3}}d x \\ 
-\frac{5}{12}\int_{\RRR}\int_{\RRR}\frac{ u^2( x ) u^2( y ) }{\vert x - y \vert}d x d y .
\end{multline}

Then, inserting \eqref{gres} into \eqref{eqmuM} gives
\begin{align}\begin{split}\mu M=-\frac{2}{3}\int_{\RRR}\vert\nabla  u ( x )\vert^2d x -\frac{1}{2}c_2\int_{\RRR}\vert  u ( x )\vert^{\frac{8}{3}}d x +\frac{7}{12}\int_{\RRR}\int_{\RRR}\frac{ u^2( x ) u^2( y ) }{\vert x - y \vert}d x d y.\end{split}\end{align}
We conclude by noting that, by Hardy-Littlewood's inequality and the interpolation inequality in Lebesgue spaces,
\begin{align}\begin{split}\int_{\RRR}\int_{\RRR}\frac{ u^2( x ) u^2( y ) }{\vert x - y \vert}d x d y \leq C M^{\frac{2}{3}}\int_{\RRR}\vert u( x )\vert^{\frac{8}{3}}d x .\end{split}\end{align}
We observe that the Pohozaev identity associated with \eqref{PDEsMin2} does not bring new information about $\mu$.

Statement (ii)  is  Theorem 1 by Le Bris \cite{LeBris}, and (iii) is Theorem~2 in  Alama-Bronsard-Choksi-Topaloglu \cite{AlamaBronsardChoksiTopalogluLongRange}.

Statement (iv) follows by the same reasoning as in the proof of Lions \cite[Corollary II.2]{Lions}.   
Finally, (v)  is a consequence of part (iv). Indeed, note that the $\mathcal{L}^2$-norm in $\RRR$ is invariant under the transformation $u\mapsto u_\sigma:=\sigma^{\frac{3}{2}}u(\sigma\cdot)$. Therefore, we can prove equation \eqref{infimumisnega} holds by first fixing any $u\in\mathcal{H}^1(\mathbb{R}^3)$ with $\vert\vert u\vert\vert_{\mathcal{L}^2(\RRR)}=1$, and then taking $\sigma$ sufficiently small so that  
\begin{align}\int_{\RRR}(\vert\nabla u_\sigma\vert^2-Vu_\sigma^2)dx=\int_{\RRR}\left[\sigma^2\vert\nabla u(x)\vert^2-\sigma^\tau\sum_{k=1}^K\frac{\alpha_k}{\vert x-\sigma r_k\vert^\tau}u^2(x)\right]dx<0.\end{align}
\end{proof}

We will require the following basic energy bound in many of our proofs.  This result is proven in Lemma~6 of Alama-Bronsard-Choksi-Topaloglu~\cite{AlamaBronsardChoksiTopalogluLongRange}; although there it is stated for minimizing sequences, it is clear from the proof that in fact it applies to any function with negative energy:

\begin{Lem}\label{LemaBoundsun}
Assume $V$ satisfies \eqref{condiV}, and $u\in\Hone$ with $\|u\|_{\mathcal{L}^2(\RRR)}^2=M$ and $\EV(u)<0.$ Then, there exists a constant $C_0=C_0(V)>0$ such that 
\begin{equation}\label{unifbd}  \|u\|^2_{\mathcal{H}^1(\RRR)} + \int_{\RRR}\int_{\RRR} {u^2(x)\, u^2(y)\over |x-y|} dx\, dy + \int_{\RRR} V(x) u^2(x)\, dx \le C_0 M.  
\end{equation}
\end{Lem}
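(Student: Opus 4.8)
The plan is to use the assumption $\EV(u)<0$ directly as an a priori bound. Moving the two negative terms of $\EV$ to the right-hand side, $\EV(u)<0$ reads
\[
\int_{\RRR}|\nabla u|^2\,dx + c_1\int_{\RRR}|u|^{\frac{10}{3}}\,dx + \frac12\int_{\RRR}\int_{\RRR}\frac{u^2(x)u^2(y)}{|x-y|}\,dx\,dy \ <\ c_2\int_{\RRR}|u|^{\frac83}\,dx + \int_{\RRR}Vu^2\,dx .
\]
Since the nonlocal integral and the $\mathcal{L}^{10/3}(\RRR)$ term already appear with the favorable sign, it suffices to absorb into the left-hand side a small fraction of $\int_{\RRR}|\nabla u|^2$ and of $\int_{\RRR}|u|^{10/3}$ coming from the right, at the price of an $O(M)$ remainder. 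From the resulting inequality one reads off $\|u\|_{\Hone}^2\le CM$ and $\int_{\RRR}\int_{\RRR}u^2(x)u^2(y)/|x-y|\,dx\,dy\le CM$, after which the potential term $\int_{\RRR}Vu^2$ is bounded a posteriori from the gradient bound.

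For the subcritical term I would interpolate $\mathcal{L}^{8/3}(\RRR)$ between $\mathcal{L}^2(\RRR)$ and $\mathcal{L}^{10/3}(\RRR)$ (with parameter $\theta=\tfrac38$), obtaining $\|u\|_{\mathcal{L}^{8/3}(\RRR)}^{8/3}\le \|u\|_{\mathcal{L}^2(\RRR)}\,\|u\|_{\mathcal{L}^{10/3}(\RRR)}^{5/3}=M^{1/2}\big(\int_{\RRR}|u|^{10/3}\,dx\big)^{1/2}$, so that Young's inequality gives
\[
c_2\int_{\RRR}|u|^{\frac83}\,dx\ \le\ \frac{c_1}{2}\int_{\RRR}|u|^{\frac{10}{3}}\,dx + \frac{c_2^2}{2c_1}\,M .
\]
For the potential term I would invoke the refined form of \eqref{condiV}: since $V\in\mathcal{L}^{3/2}(\RRR)+\mathcal{L}^{\infty}(\RRR)$, for every $\delta>0$ there is a splitting $V=V_1+V_2$ with $\|V_1\|_{\mathcal{L}^{3/2}(\RRR)}<\delta$ and $V_2\in\mathcal{L}^{\infty}(\RRR)$ --- write $V=W_1+W_2$ with $W_1\in\mathcal{L}^{3/2}(\RRR)$, $W_2\in\mathcal{L}^{\infty}(\RRR)$, truncate $W_1=W_1\mathbf{1}_{\{|W_1|>\lambda\}}+W_1\mathbf{1}_{\{|W_1|\le\lambda\}}$, put the second, bounded, piece into $V_2$, and use dominated convergence so that the first piece has arbitrarily small $\mathcal{L}^{3/2}$ norm for $\lambda$ large. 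Then Hölder's inequality together with the Sobolev inequality $\|u\|_{\mathcal{L}^{6}(\RRR)}^2\le S\|\nabla u\|_{\mathcal{L}^2(\RRR)}^2$ give
\[
\int_{\RRR}Vu^2\,dx\ \le\ \|V_1\|_{\mathcal{L}^{3/2}(\RRR)}\,\|u\|_{\mathcal{L}^{6}(\RRR)}^2 + \|V_2\|_{\mathcal{L}^{\infty}(\RRR)}\,M\ \le\ \delta S\int_{\RRR}|\nabla u|^2\,dx + \|V_2\|_{\mathcal{L}^{\infty}(\RRR)}\,M ,
\]
and I would fix $\delta$ once and for all with $\delta S\le\tfrac12$.

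Combining the last three displays, $\EV(u)<0$ yields $\tfrac12\int_{\RRR}|\nabla u|^2 + \tfrac{c_1}{2}\int_{\RRR}|u|^{10/3} + \tfrac12\int_{\RRR}\int_{\RRR}u^2(x)u^2(y)/|x-y|\le C_1(V)\,M$, where $C_1$ depends only on $c_1,c_2$ and, through $\|V_2\|_{\mathcal{L}^{\infty}(\RRR)}$ for the fixed $\delta$, on $V$ --- crucially not on $u$ or $M$. Since $\|u\|_{\mathcal{L}^2(\RRR)}^2=M$, this bounds $\|u\|_{\Hone}^2$ and the nonlocal integral by a multiple of $M$; feeding the resulting gradient bound back into the estimate for $\int_{\RRR}Vu^2$ controls that term as well, which gives \eqref{unifbd} with $C_0=C_0(V)$. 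The computation is essentially routine; the one point requiring care is the refined $\mathcal{L}^{3/2}+\mathcal{L}^{\infty}$ decomposition with arbitrarily small $\mathcal{L}^{3/2}$ part, which is exactly what legitimizes the absorption of $\int_{\RRR}Vu^2$ into the gradient term, together with the bookkeeping that keeps every constant independent of $u$ and $M$.
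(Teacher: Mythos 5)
Your proof is correct: the interpolation exponent $\theta=\tfrac38$ giving $\|u\|_{\mathcal{L}^{8/3}(\RRR)}^{8/3}\le M^{1/2}\bigl(\int_{\RRR}|u|^{10/3}dx\bigr)^{1/2}$, the Young absorption into $\tfrac{c_1}{2}\int|u|^{10/3}$, and the $\mathcal{L}^{3/2}+\mathcal{L}^{\infty}$ splitting of $V$ with small $\mathcal{L}^{3/2}$ part absorbed via Sobolev into $\tfrac12\int|\nabla u|^2$ all check out, and every constant depends only on $c_1,c_2$ and the fixed decomposition of $V$, not on $u$ or $M$. This is essentially the same coercivity argument as the proof the paper cites (Lemma 6 of Alama--Bronsard--Choksi--Topaloglu \cite{AlamaBronsardChoksiTopalogluLongRange}), with the added benefit of making explicit why the hypothesis $\EV(u)<0$ alone, rather than membership in a minimizing sequence, suffices.
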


\begin{Rmk}Note that boundedness of a sequence $\{u_n\}_{n\in\mathbb{N}}$ in $\Hone$ implies boundedness of the same sequence in $\mathcal{L}^r(\RRR)$, for $2\leq r\leq 6$.\end{Rmk}

The following is stated as part of Proposition~\ref{TSN2}, but its proof only depends on the bounds stated in Lemma~\ref{LemaBoundsun}, and the result will be needed below.

\begin{Prop}\label{MinimizingForZnisMinimizingFore0}
Let $u_n$ minimize $I_{V_n}(M)$, where $V_n$ is as in \eqref{formulaVn}.  Then $\{u_n\}_{n\in\mathbb{N}}$ is a minimizing sequence for $I_{V_{TF}}(M)$.
\end{Prop}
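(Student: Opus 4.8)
The plan is to estimate $\mathcal{E}_{V_{TF}}(u_n)$ from above and below by $I_{V_{TF}}(M)$ with an error of size $Z_n$. Since $V_n\ge V_{TF}$ pointwise, one has $\mathcal{E}_{V_n}\le\mathcal{E}_{V_{TF}}$ on the constraint set, hence $I_{V_n}(M)\le I_{V_{TF}}(M)$; on the other hand $u_n$ is admissible for $I_{V_{TF}}(M)$, so $\mathcal{E}_{V_{TF}}(u_n)\ge I_{V_{TF}}(M)$. Writing out the difference of the two functionals,
\[
\mathcal{E}_{V_{TF}}(u_n)=\mathcal{E}_{V_n}(u_n)+Z_n\int_{\RRR}\frac{u_n^2(x)}{\vert x\vert^\nu}\,dx=I_{V_n}(M)+Z_n\int_{\RRR}\frac{u_n^2(x)}{\vert x\vert^\nu}\,dx ,
\]
and combining these three facts gives
\[
I_{V_{TF}}(M)\le\mathcal{E}_{V_{TF}}(u_n)\le I_{V_{TF}}(M)+Z_n\int_{\RRR}\frac{u_n^2(x)}{\vert x\vert^\nu}\,dx .
\]
Thus the whole statement reduces to showing that $\int_{\RRR}\vert x\vert^{-\nu}u_n^2\,dx$ is bounded uniformly in $n$; then, since $Z_n\to0$ and $\|u_n\|_{\mathcal{L}^2(\RRR)}^2=M$, the conclusion follows.

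To obtain the uniform bound I would first establish a uniform $\mathcal{H}^1$ estimate on the $u_n$. Since $Z_n\to0$ we may fix $Z_*>0$ with $Z_n\le Z_*$ for all $n$ and set $\tilde V:=V_{TF}+Z_*\vert x\vert^{-\nu}$; because $0<\nu<1$, the function $\vert x\vert^{-\nu}$ lies in $\mathcal{L}^{3/2}$ on the unit ball and is bounded outside it, so $\tilde V\ge0$ satisfies \eqref{condiV}. As $V_n\le\tilde V$ and $u_n$ minimizes $I_{V_n}(M)$, Proposition~\ref{properties1}(i) gives $\mathcal{E}_{\tilde V}(u_n)\le\mathcal{E}_{V_n}(u_n)=I_{V_n}(M)<0$, and Lemma~\ref{LemaBoundsun} applied with $V=\tilde V$ yields $\|u_n\|_{\mathcal{H}^1(\RRR)}^2\le C_0(\tilde V)\,M$ for every $n$. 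Splitting $\int_{\RRR}\vert x\vert^{-\nu}u_n^2=\int_{\{\vert x\vert<1\}}+\int_{\{\vert x\vert\ge1\}}$, the exterior integral is at most $\|u_n\|_{\mathcal{L}^2(\RRR)}^2=M$, while Hölder's inequality with exponents $3/2$ and $3$ together with the embedding $\mathcal{H}^1(\RRR)\hookrightarrow\mathcal{L}^6(\RRR)$ bounds the interior integral by $\big\|\vert x\vert^{-\nu}\big\|_{\mathcal{L}^{3/2}(\{\vert x\vert<1\})}\|u_n\|_{\mathcal{L}^6(\RRR)}^2\le C\,C_0(\tilde V)\,M$. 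Hence $\int_{\RRR}\vert x\vert^{-\nu}u_n^2\,dx\le C_1M$ with $C_1$ independent of $n$, and $Z_n\int_{\RRR}\vert x\vert^{-\nu}u_n^2\,dx\to0$, which together with the squeeze above completes the proof.

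The one genuine obstacle is making the $\mathcal{H}^1$ bound uniform in $n$: the constant $C_0$ in Lemma~\ref{LemaBoundsun} depends on the potential, so one cannot simply apply it to each $V_n$ individually; the trick is to dominate the whole family $\{V_n\}$ by the single admissible potential $\tilde V$ before invoking the lemma. The remaining ingredients—the pointwise comparison $V_n\ge V_{TF}$ and the control of the weighted integral by the Sobolev norm, which works precisely because $\vert x\vert^{-\nu}$ is locally $\mathcal{L}^{3/2}$ for $\nu<1$—are routine.
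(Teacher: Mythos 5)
Your proposal is correct and follows essentially the same route as the paper: dominate the whole family $\{V_n\}$ by a single fixed potential (the paper uses $V_1$, you use $\tilde V=V_{TF}+Z_*\vert x\vert^{-\nu}$) so that Lemma~\ref{LemaBoundsun} gives a uniform $\mathcal{H}^1$ bound, then show $Z_n\int\vert x\vert^{-\nu}u_n^2\,dx\to0$ by splitting at the unit ball and using H\"older with $\vert x\vert^{-\nu}\in\mathcal{L}^{3/2}(B_1)$ plus the Sobolev embedding. Your sandwich $I_{V_{TF}}(M)\le\mathcal{E}_{V_{TF}}(u_n)\le I_{V_{TF}}(M)+Z_n\int\vert x\vert^{-\nu}u_n^2\,dx$ is just a reorganization of the paper's liminf/limsup chain, so there is nothing substantive to distinguish the two arguments.
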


\begin{proof}
Let $\{u_n\}$ be  minimizers  for $I_{V_n}$, $n\in\mathbb{N}$.
First, note that $V_1\ge V_n(x)\ge V_{TF}(x)$ for all $x$, and hence 
$$\mathcal{E}_{V_1}(u_n)\le \mathcal{E}_{V_n}(u_n)=I_{V_n}\le I_{TF}<0$$ 
for all $n$.  Applying Lemma~\ref{LemaBoundsun} with $V=V_1$, the sequence $\{u_n\}_{n\in\NN}$ satisfies the bounds \eqref{unifbd} uniformly in $n\in\NN$. Next, we observe that  $|x|^{-\nu}\in \mathcal{L}^{\frac{3}{2}}_{loc}(\RRR)$ for $0<\nu<1$, and thus
\begin{align}
    Z_n\int_{\RRR} {u_n^2(x)\over |x|^\nu}\, dx
               & \le Z_n \int_{B_1(0)} {u_n^2(x)\over |x|^\nu}dx + Z_n\int_{\RRR\setminus B_1(0)} u_n^2(x)\, dx  \\
            &\le  Z_n \|u_n\|_{\mathcal{L}^6(\RRR)}^2 \, \left\| |x|^{-\nu}\right\|_{\mathcal{L}^{\frac{3}{2}}(B_1(0))} + Z_n M \\
            &\le cZ_n \|\nabla u_n\|_{\mathcal{L}^2(\RRR)}^2 + Z_n M\longrightarrow 0.
\end{align}
 In particular,
$\EVn(u_n)= \ETF(u_n) + o(1),$   and therefore we may conclude,
$$
I_{V_{TF}} \le \liminf_{n\to\infty} \ETF(u_n) = \liminf_{n\to\infty} \EVn(u_n) 
   = \liminf_{n\to\infty} I_{V_n} \le \limsup_{n\to\infty} I_{V_n} \le I_{V_{TF}}.
$$
\end{proof}

\begin{Lem}\label{ExpDecayun} 
Under all hypotheses of Theorem \ref{TheoremLocation}, we have that, up to a subsequence, for all $0<t<\sqrt{-\mu}$, there exists a constant C independent of $n$ with
\begin{align}\begin{split}\label{boundunexp}0<\vert u_n( x )\vert
  + |\nabla u_n(x)| \leq Ce^{-t\sigma_n( x )},\textit{ a.e. in $\RRR$,}\end{split}\end{align}
 where
\begin{align}\begin{split}\sigma_n( x ):=\min_{0\leq i\leq N}\vert x - x _n^{i}\vert,\end{split}\end{align}
and $x_n^i$ are as in the Concentration Theorem~\ref{TheoremSplit}.
\end{Lem}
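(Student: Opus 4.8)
The plan is to bootstrap from the single-function exponential decay estimate \eqref{expdecay}, which is available because by assumption $m^0\in\mathcal{M}_{V_{TF}}^*$, so by Remark (item~4 after Theorem~\ref{TheoremLocation}) the common Lagrange multiplier satisfies $\mu<0$. The obstacle is that \eqref{expdecay} is a statement about a \emph{single} limiting minimizer, with a constant depending a priori on that minimizer, whereas here we need a bound on the sequence $u_n$ itself, \emph{uniform in $n$}, and decaying from all $N+1$ moving centers simultaneously. The bridge between the two is the $\mathcal{H}^1$-convergence \eqref{ca1}: $u_n(\cdot)-\sum_{i=0}^N u^i(\cdot - x_n^i)\to 0$ in $\mathcal{H}^1(\RRR)$, together with \eqref{xblow}, so that near each center $x_n^i$ the function $u_n$ looks like $u^i$ (which satisfies \eqref{PDEsMin1} or \eqref{PDEsMin2} with the same $\mu<0$), and far from \emph{all} centers $u_n$ is small in $\mathcal{H}^1$, hence (by Sobolev and the subcritical elliptic regularity already implicit in the PDE) small in $\mathcal{L}^\infty$.

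The key steps, in order. First, write the PDE \eqref{PDEsMin1} satisfied by each $u_n$ (with potential $V_n$ and multiplier $\mu_n$) in the Schr\"odinger form $-\Delta u_n + W_n u_n = 0$ where $W_n = -\mu_n + \tfrac53 c_1|u_n|^{4/3} - \tfrac43 c_2|u_n|^{2/3} - V_n + |u_n|^2\star|\cdot|^{-1}$; observe $\mu_n\to\mu<0$ (the multipliers converge since each $u^i$ solves the limit equation with multiplier $\mu$, and one extracts $\mu_n\to\mu$ from the convergence \eqref{ca1} tested against a fixed component), so for large $n$ we have $-\mu_n\ge \tfrac12(-\mu)>0$. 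Second, fix $0<t<\sqrt{-\mu}$ and pick $R$ large: on the region $\sigma_n(x)>R$, the convergence \eqref{ca1} plus the decay \eqref{expdecay} of each $u^i$ (together with a local elliptic estimate $\|u_n\|_{\mathcal{L}^\infty(B_1(x))}\le C\|u_n\|_{\mathcal{H}^1(B_2(x))}$, valid uniformly since the nonlinearity $|u_n|^{4/3}$ is subcritical and $V_n$ is uniformly in $\mathcal{L}^{3/2}+\mathcal{L}^\infty$) forces $|u_n|+|\nabla u_n|\le \eta$ pointwise, with $\eta$ as small as we like by taking $R$ then $n$ large; in particular $\tfrac53 c_1|u_n|^{4/3}-\tfrac43 c_2|u_n|^{2/3}\ge -\eta'$ and $V_n\le \eta'$ on that region (using $V_{TF}$ and $Z_n|x|^{-\nu}$ are small far from the nuclei / large $|x|$, and $\sigma_n>R$ in particular pushes $x$ away from every $r_k$ when $n$ is large, since the $x_n^i$ include $x_n^0=0$ and the nuclei are fixed), while the Coulomb convolution term is nonnegative. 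Hence $W_n\ge t^2$ on $\{\sigma_n>R\}$ for $n$ large. Third, run the standard comparison-function argument: let $\varphi(x)=Ce^{-t(\sigma_n(x)-R)}$; since $\sigma_n$ is a minimum of the smooth functions $|x-x_n^i|$, away from the cut locus one computes $-\Delta\varphi + t^2\varphi \le -\Delta\varphi + W_n\varphi$ and $-\Delta(e^{-t|x-x_n^i|}) = (t^2 - \tfrac{2t}{|x-x_n^i|})e^{-t|x-x_n^i|}\le t^2 e^{-t|x-x_n^i|}$, so $\varphi$ is a supersolution of $-\Delta + W_n$ on $\{\sigma_n>R\}$ (the min of supersolutions is a supersolution in the viscosity/distributional sense, handling the cut locus), and on the boundary $\{\sigma_n=R\}$ we have $\varphi = C\ge |u_n|$ once $C$ is chosen as the uniform $\mathcal{L}^\infty$ bound on $u_n$ from Lemma~\ref{LemaBoundsun} and elliptic regularity. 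The maximum principle on the (unbounded) region $\{\sigma_n>R\}$, justified since $u_n,\varphi\to 0$ at infinity, gives $|u_n(x)|\le Ce^{-t(\sigma_n(x)-R)}\le Ce^{tR}e^{-t\sigma_n(x)}$ there; on the complement $\{\sigma_n\le R\}$ the bound is trivial after enlarging $C$. Fourth, upgrade to the gradient: apply interior elliptic estimates to $-\Delta u_n = -W_n u_n$ on unit balls, using the just-proved pointwise decay of $u_n$ and the boundedness of $W_n$ in $\mathcal{L}^{3/2}_{loc}+\mathcal{L}^\infty$, to get $|\nabla u_n(x)|\le C'e^{-t'\sigma_n(x)}$ for any $t'<t$; absorbing constants and relabeling $t$ gives \eqref{boundunexp}. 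Positivity $|u_n|>0$ is Proposition~\ref{properties1}(iii) (Harnack, since $V_n=V_Z$).

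The main obstacle is Step~2: making the smallness of $u_n$ on the ``far'' region $\{\sigma_n>R\}$ genuinely \emph{uniform in $n$}, and simultaneously controlling the potential term there. The subtlety is that $V_{TF}$ has fixed singularities at the $r_k$, so one must check that for $n$ large the set $\{\sigma_n(x)>R\}$ avoids fixed neighborhoods of the nuclei --- this is where $x_n^0=0$ (valid since $V_n\not\equiv 0$, Theorem~\ref{TheoremSplit}) is used, pinning one center near the nuclei so that being far from \emph{all} centers forces $|x-r_k|$ large, or else $|x|$ large, in either case $V_n(x)$ small. The smallness of $u_n$ itself on that region comes from combining $\|u_n-\sum_i u^i(\cdot-x_n^i)\|_{\mathcal{H}^1}\to 0$ with the tails of the fixed functions $u^i$ (each decaying by \eqref{expdecay}) and a uniform local $\mathcal{H}^1\hookrightarrow\mathcal{L}^\infty$ bound obtained from the PDE; once this is in hand the comparison argument in Step~3 is routine. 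An alternative to Step~3 that avoids the cut-locus issue is to run $N+1$ separate comparison arguments with $\varphi_i = Ce^{-t|x-x_n^i|}$ on annular regions and then take the pointwise maximum, which is harmless since we only claim an upper bound.
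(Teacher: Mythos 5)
Your proposal is correct and follows essentially the same route as the paper: exact Euler--Lagrange equation with $\mu_n\to\mu<0$, smallness of $u_n$ in $\mathcal{L}^\infty$ outside $\bigcup_i B_R(x_n^i)$ obtained from the concentration decomposition, the decay of the limit profiles $u^i$, and local elliptic estimates, followed by comparison with the supersolution $Ce^{-t\sigma_n}$ on the exterior region and interior elliptic estimates for the gradient. The only cosmetic differences are bookkeeping: you keep $V_n$ explicitly and justify the unbounded-domain maximum principle by decay at infinity, whereas the paper absorbs $V_n$ into the smallness argument and invokes Stampacchia's method as in Benguria--Brezis--Lieb.
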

\begin{proof} 
By Proposition~\ref{properties1} (iii), we may take $u_n(x)>0$ in $\RRR$.
 Alama, Bronsard, Choksi, and Topaloglu~\cite{AlamaBronsardChoksiTopalogluLongRange} proved that
\begin{align}\begin{split}\label{eqnun}-\Delta u_n=\left(\mu_n-\frac{5}{3}c_1u_n^{\frac{4}{3}}+\frac{4}{3}c_2u_n^{\frac{2}{3}}+V_n-u_n^2\star\vert\cdot\vert^{-1}\right)u_n,\end{split}\end{align}
for some $\mu_n<0$. In addition to this, by the final step in the proof of the Concentration Theorem~\ref{TheoremSplit}, the Lagrange multipliers $\mu_n\to\mu$ converge.  Fix $t\in (0,\sqrt{-\mu})$; then, for all $n$ sufficiently large, 
\begin{align}\begin{split} -\Delta u_n+t^2u_n<\left[\frac{1}{2}(t^2+\mu)+\frac{4}{3}c_2u_n^{\frac{2}{3}}\right]u_n.\end{split}\end{align}
Moreover, by Lemma \ref{LemaBoundsun}  and equation \eqref{eqnun} we have that $\{u_n\}_{n\in\mathbb{N}}$ is bounded in $\mathcal{H}^2(\RRR)$, and hence in $\mathcal{L}^\infty(\RRR)$. Therefore, we can make use of Theorem 8.17 by Gilbarg and Trudinger \cite{GilbargTrudinger} to obtain 
\begin{align}\vert\vert u_n\vert\vert_{\mathcal{L}^\infty(B_1(y))}&\leq C\vert\vert u_n\vert\vert_{\mathcal{L}^2(B_2(y))}\leq C\vert\vert u_n\vert\vert_{\mathcal{L}^2(\RRR\setminus \cup B_{R/2}( x _n^{i}))},\quad y\in\RRR\setminus\cup B_{R}(x_n^i),R\gg1,\end{align}
where $C$ is a constant independent of $n, R$ and $y$. By covering $\RRR\setminus\cup B_{R}(x_n^i)$ with balls of radius one centered at points in the same set we obtain
\begin{align}\label{boundlinftyun}\vert\vert u_n\vert\vert_{\mathcal{L}^\infty(\RRR\setminus \cup B_{R}( x _n^{i})))}&\leq C\vert\vert u_n\vert\vert_{\mathcal{L}^2(\RRR\setminus \cup B_{R/2}( x _n^{i}))}.\end{align}

On the other hand, by Proposition \ref{MinimizingForZnisMinimizingFore0}, $\{u_n\}_{n\in\mathbb{N}}$ is a minimizing sequence for $I_{V_{TF}}(M)$, and hence the conclusions of Concentration Theorem \ref{TheoremSplit} hold. In particular, this implies that given $\epsilon>0$, there exists $R_0=R_0(\epsilon)\ge 1$ such that
\begin{align}\vert\vert u^i\vert\vert_{\mathcal{L}^2(\RRR\setminus B_{R/2}( 0 ))}^2<\frac{\epsilon}{N+1},\quad i=0,\dots,N, \ R\geq R_0,\end{align}
and \eqref{ca1}, \eqref{xblow}, Rellich-Kondrakov Theorem, and the decay of all $u^i$ \eqref{expdecay} ensure that, up to a subsequence,
\begin{equation}\label{lanuevacosa}  u_n(\cdot+ x _n^{i})\to u^{i}\textit{ in }\mathcal{L}^2(B_{R/2}( 0 )), \quad i=0,\dots N, \ R\geq R_0.\end{equation}
As a result, 
\begin{align}\lim_{n\to\infty}\vert\vert u_n\vert\vert_{\mathcal{L}^2(\RRR\setminus B_{R/2}( x _n^{i}))}^2&=M-\lim_{n\to\infty}\vert\vert u_n\vert\vert_{\mathcal{L}^2(\cup B_{R/2}( x _n^{i}))}^2\\
&=M-\lim_{n\to\infty}\sum_{i=0}^N\vert\vert u_n(x+x_n^i)\vert\vert_{\mathcal{L}^2(B_{R/2}( 0 ))}^2\\
&=M-\sum_{i=0}^N\vert\vert u^i\vert\vert_{\mathcal{L}^2(B_{R/2}( 0 ))}^2\\
&=\sum_{i=0}^N\vert\vert u^i\vert\vert_{\mathcal{L}^2(\RRR\setminus B_{R/2}( 0 ))}^2<\epsilon.\end{align}
Then, given any $\epsilon>0$, by choosing $R_0=R_0(\epsilon)$ larger if necessary, we have
\begin{align}\limsup_{n\to\infty}\vert\vert u_n\vert\vert_{\mathcal{L}^\infty(\RRR\setminus \cup B_{R}( x _n^{i})))}\leq \epsilon,\quad 
   R\geq R_0,\end{align}
and hence for large enough $n$ and $R$,
\begin{align}\begin{split} -\Delta u_n+t^2u_n<0,\textit{ a.e. in $\RRR\setminus\cup B_R({x}_n^i)$}.\end{split}\end{align}
Next, it is not hard to check that
\begin{align}\begin{split} -\Delta e^{-t\sigma_n}+t^2e^{-t\sigma_n}>0,\textit{ a.e. in $\RRR\setminus\cup B_{R}( x _n^i)$},\end{split}\end{align}
and that there exists $C>0$ so that
\begin{align}u_n\vert_{\partial \cup B_R(x_n^i)}\leq Ce^{-tR}=Ce^{-t\sigma_n( x )}\vert_{\partial\cup B_R(x_n^i)}\end{align}
Thus, 
\begin{align}\begin{split}-\Delta [u_n( x )-Ce^{-t\sigma_n( x )}]+t^2[u_n( x )-Ce^{-t\sigma_n( x )}]<0,\textit{ a.e. in $\RRR\setminus\cup B_{R}( x _n^i)$}.\end{split}\end{align}
At this point, we would like to invoke the maximum principle to assert that $u_n(x)$ is dominated by the supersolution $v(x)=Ce^{-t\sigma_n}$ in the domain $\Omega_n:=\RRR\setminus\cup B_{R}( x _n^i)$.  As the domain is unbounded, this requires some care, but applying Stampacchia's method as in Benguria, Brezis and Lieb \cite[Lemma 8]{BenguriaBrezisLieb} we obtain the desired bound,
$$  0<u_n(x) \le v(x)=Ce^{-t\sigma_n( x )}, \qquad x\in \Omega_n.  $$

The estimate on $|\nabla u_n|$ then follows from standard elliptic estimates; see for instance Theorems~8.22 and 8.32 of
\cite{GilbargTrudinger}.
\end{proof}

At this point we would like to note that the functions $u^i$ decay to zero at infinity, even if $m^0\not\in \mathcal{M}^*_{V_{TF}}$. This follows from Proposition \ref{properties1} (iii) and Theorem 8.17 uniformly by Gilbarg and Trudinger \cite{GilbargTrudinger}, again.



%
\section{Proof of Theorem \ref{TheoremSplitNewtonian}}

The proofs of Theorems \ref{TheoremSplitNewtonian} and \ref{TheoremLocation} both rely on the splitting structure given in the Concentration Theorem~\ref{TheoremSplit}, and on the idea that, when calculating the interaction energy between very widely separated components $u^i(x+x^i_n)$, only the mass $m^i$ and centers $x^i_n$ enter into the computation at first order.  The following simple lemma makes this precise, at least for compactly supported components:

\begin{Lem}\label{localize}  {\bf (a)} \ Let $v^1,v^2\in \Hone$ with compact support, 
supp${}\, v^i\subset B_\rho(\zeta^i)$, $i=1,2$, with $1<\rho<\frac14R$, $R=|\zeta^1-\zeta^2|>0$.  Then,
$$   \left| \int_{B_\rho(\zeta^1)}\int_{B_\rho(\zeta^2)}
    {|v^1(x)|^2 |v^2(y)|^2\over |x-y|}dx\, dy 
       - {\|v^1\|_{\mathcal{L}^2(\RRR)}^2 \|v^2\|_{\mathcal{L}^2(\RRR)}^2  \over |\zeta^1-\zeta^2|} \right| \le
              {4\rho\over R^2} \|v^1\|_{\mathcal{L}^2(\RRR)}^2 \|v^2\|_{\mathcal{L}^2(\RRR)}^2.  $$

{\bf (b)} \  Let $v\in\Hone$ with compact support, 
supp${}\, v\subset B_\rho(\zeta)$, with $1<\rho<\frac14 R=|\zeta|$.  For any $\nu>0$ and fixed vector $ r \in\RRR$ with $0<| r |<\frac14 R$,
$$  \left|  \int_{B_\rho(\zeta)} {|v(x)|^2\over |x- r |^\nu} dx 
         - {\|v\|_{\mathcal{L}^2(\RRR)}^2\over |\zeta|^\nu}\right| \le C_\nu {\rho\over R^{\nu+1}}\|v\|_{\mathcal{L}^2(\RRR)}^2.
$$
\end{Lem}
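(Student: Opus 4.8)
The plan is to exploit the fact that both integrands are evaluated at points $x,y$ lying in small balls whose centers are far apart, so that the kernels $|x-y|^{-1}$ and $|x-r|^{-\nu}$ are nearly constant and equal to their values at the centers. For part (a), I would write
$$
\int_{B_\rho(\zeta^1)}\int_{B_\rho(\zeta^2)}\frac{|v^1(x)|^2|v^2(y)|^2}{|x-y|}\,dx\,dy
 - \frac{\|v^1\|_{\mathcal{L}^2}^2\|v^2\|_{\mathcal{L}^2}^2}{|\zeta^1-\zeta^2|}
 = \int_{B_\rho(\zeta^1)}\int_{B_\rho(\zeta^2)}|v^1(x)|^2|v^2(y)|^2\left(\frac{1}{|x-y|}-\frac{1}{R}\right)dx\,dy,
$$
and then estimate the scalar factor pointwise. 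For $x\in B_\rho(\zeta^1)$, $y\in B_\rho(\zeta^2)$ we have $|x-y|\ge R-2\rho\ge R/2$ (using $\rho<R/4$) and $\bigl||x-y|-R\bigr|\le 2\rho$, hence
$$
\left|\frac{1}{|x-y|}-\frac{1}{R}\right| = \frac{\bigl||x-y|-R\bigr|}{|x-y|\,R}\le \frac{2\rho}{(R/2)R}=\frac{4\rho}{R^2}.
$$
Pulling this bound out of the double integral and using $\int|v^i|^2=\|v^i\|_{\mathcal{L}^2}^2$ gives exactly the claimed estimate.

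Part (b) is the same argument with the kernel $|x-r|^{-\nu}$ in place of $|x-y|^{-1}$ and the fixed center $\zeta$ replacing $\zeta^1-\zeta^2$. For $x\in B_\rho(\zeta)$ and $|r|<R/4$, $|\zeta|=R$, the triangle inequality gives $|x-r|\ge |\zeta| - \rho - |r| \ge R - R/4 - R/4 = R/2$ and $\bigl||x-r|-|\zeta|\bigr|\le \rho + |r|$; a cleaner route is to use $\bigl||x-r|-|\zeta|\bigr|\le |x-\zeta|+|r|$, but since only the order in $\rho$ and $R$ matters I would instead bound directly via the mean value theorem applied to $t\mapsto t^{-\nu}$: writing $a=|x-r|$ and $b=|\zeta|=R$, both in $[R/2,2R]$, we get $|a^{-\nu}-b^{-\nu}|\le \nu (R/2)^{-\nu-1}|a-b|\le \nu 2^{\nu+1}R^{-\nu-1}(\rho+|r|)\le C_\nu \rho R^{-\nu-1}$ since $|r|<R/4<\rho R/\text{const}$—more carefully, since $|r|$ is a fixed vector one absorbs it by enlarging $C_\nu$ and noting $|r|<R/4$ forces $R$ large, or one simply states the estimate with $(\rho+|r|)$ and remarks $|r|<R/4$. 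Then pulling the pointwise bound out of $\int_{B_\rho(\zeta)}|v(x)|^2\,dx=\|v\|_{\mathcal{L}^2}^2$ yields the result.

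There is no real obstacle here: the lemma is purely a matter of replacing slowly-varying kernels by their central values and tracking the first-order error, and the only mild care needed is to make the geometric inequalities $|x-y|\ge R/2$, $|x-r|\ge R/2$ explicit from the hypothesis $\rho<R/4$ (together with $|r|<R/4$ in part (b)), and to handle the fixed shift $r$ in part (b) by absorbing it into the constant $C_\nu$. I would present (a) in full and note that (b) follows mutatis mutandis, with the elementary estimate $|a^{-\nu}-b^{-\nu}|\le \nu\,(\min(a,b))^{-\nu-1}|a-b|$ replacing the $\nu=1$ computation used in (a).
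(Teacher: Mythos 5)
Your proof is correct and is essentially the paper's own argument: both parts reduce to the pointwise comparison of the kernel with its value at the centers (the paper records precisely the bounds $\bigl|\tfrac{1}{|\zeta^1-\zeta^2|}-\tfrac{1}{|x-y|}\bigr|\le \tfrac{2\rho}{(R-\rho)^2}\le\tfrac{4\rho}{R^2}$ and the mean-value estimate for $t\mapsto t^{-\nu}$), after which the $\mathcal{L}^2$ masses factor out of the integrals. The $(\rho+|r|)$ versus $\rho$ point you raise in (b) is genuine but is equally implicit in the paper's stated pointwise bound $\tfrac{\nu\rho}{(|\zeta|-\rho-|r|)^{\nu+1}}$; it is harmless in context because in every application $r$ is a fixed vector while $\rho\ge 1$ (indeed $\rho\to\infty$), so $|r|\le C\rho$ and the extra term is absorbed into $C_\nu$.
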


\begin{proof}  These follow from the pointwise estimates,
\begin{align}\begin{split}\label{jajua1}
\left\vert\frac{1}{\vert \zeta ^1-\zeta ^2\vert}-\frac{1}{\vert x - y \vert}\right\vert
\leq 
{2\rho\over (R-\rho)^2}\le {4\rho\over R^2}, 
\\\label{jajua2}
\left\vert\frac{1}{\vert \zeta \vert^{\theta}}-\frac{1}{\vert x - r \vert^{\theta}}\right\vert\leq \frac{\theta \rho}{(\vert \zeta \vert-\rho-\vert r\vert)^{\theta+1}}
\le C_\theta {\rho\over R^{1+\theta}},\end{split}\end{align}
for all $x\in B_\rho(\zeta^1)$, $y\in B_\rho(\zeta^2)$, and $1<\rho<\frac14 R$.
\end{proof}

Unlike the case of the Gamow liquid drop problem, our components $u^i$ are not of compact support, so we need to resort to truncation.  This will prove effective provided we are in a situation where the minimizers $u^i$ have exponential decay.
To generate localization functions, fix any smooth $\phi:\mathbb{R}\to[0,1]$ for which
\begin{align}\begin{split}\label{asdadasd}\phi\mathbbm{1}_{(-\infty,0]}\equiv1,\quad  \phi\mathbbm{1}_{[1,\infty)}\equiv0,\quad  \vert\vert\phi'\vert\vert_{\mathcal{L}^\infty(\mathbb{R})}\leq2.\end{split}\end{align}
We are now ready to prove Theorem \ref{TheoremSplitNewtonian} and Proposition \ref{TSN2}, on the size of the compact part of minimizing sequences.  The argument for the first Theorem is similar to Lions'  proof of existence of  minimizers \cite{Lions} for TDFW with $M\le\ZT$.

\begin{proof}[Proof of Theorem~\ref{TheoremSplitNewtonian}]
We write the potential $V=V_{TF}+W$, where $W(x)\ge 0$ and $W$ satisfies \eqref{condiV}.  To obtain a contradiction, assume $\{u_n\}_{n\in\mathbb{N}}$ is a minimizing sequence for $I_V(M)$ for which there is splitting (i.e., $N\ge 1$ in Theorem~\ref{TheoremSplit},) but $0<m^0<\ZT$.  We let $u^i$, $m^i=\|u^i\|_{\mathcal{L}^2(\RRR)}^2$, $i=0,\dots,N$ be as given by Theorem~\ref{TheoremSplit}.
Fix $N$ distinct unit vectors ${e^i}\in\RRR$, and for $\rho>1$ define ${q^i}$ by
$$  {q^0}=0, \qquad {q^1}=\rho^2 {e^1}, \qquad {q^i}=\rho^3{e^i}, \  i=2,\dots,N.  $$
Then we define the truncated components,
$$   U^i_\rho(x):= \phi(|x-{q^i}|- \rho + 1)\, u^i(x-{q^i}), \qquad i=0,\dots, N.  $$
That is, each $U^i$ has been truncated to have support in the ball $B_\rho({q^i})$.

As $m^0<\ZT$, by Proposition~\ref{muneg} (ii), $\mu<0$ for all Lagrange multipliers corresponding to $u^i$, $i=0,\dots,N$, and hence the exponential decay estimate \eqref{expdecay} holds for all of them.  Let $\lambda=\frac12\sqrt{-\mu}$ for simplicity.
Then,
$$   m^i_\rho:= \| U^i_\rho\|_{\mathcal{L}^2(\RRR)}^2 \le \int_{B_{\rho}({q^i})} \vert u^i(x)\vert^2dx = m^i - O(e^{-\lambda\rho}),  $$
and
$$  \EV(U^0_\rho)=\EV(u^0) + O(e^{-\lambda\rho}), \qquad
\Ez(U^i_\rho)=\Ez(u^i) + O(e^{-\lambda\rho}), \quad i=1,\dots,N.  $$

Let $w_\rho:= U^0_\rho + \sum_{i=1}^N U^i_\rho.$  As $\|w_\rho\|_{\mathcal{L}^2(\RRR)}^2 < M$, by monotonicity of $I_V(M)$ we have
\begin{align}
I_V(M) &\le I_V(\|w_\rho\|_{\mathcal{L}^2(\RRR)}^2) \le \EV(w_\rho)   \\
   &\le \EV(U^0_\rho) + \sum_{i=1}^N \Ez(U^i_\rho) 
     + \sum_{i,j=0\atop i\neq j}^N \int_{B_\rho({q^i})}\int_{B_\rho({q^j})}
           {|U_\rho^i(x)|^2 |U_\rho^j(y)|^2\over |x-y|} dx\, dy  
           \label{upperbound}  \\
     &\quad - \sum_{i=1}^N \int_{B_\rho({q^i})} [V_{TF}(x)+W(x)]|U^i_\rho(x)|^2\, dx + O(e^{-\lambda\rho}) \\
     &= I_V(m^0) + \sum_{i=1}^N I_0(m^i) 
     + \sum_{i,j=0\atop i\neq j}^N \int_{B_\rho({q^i})}\int_{B_\rho({q^j})}
           {|U_\rho^i(x)|^2 |U_\rho^j(y)|^2\over |x-y|} dx\, dy  
           \label{upperbound2}
             \\
     &\quad - \sum_{i=1}^N \int_{B_\rho({q^i})} [V_{TF}(x)+W(x)]|U^i_\rho(x)|^2\, dx + O(e^{-\lambda\rho}).
\end{align}

Next, we use Lemma~\ref{localize} to evaluate the interaction terms.  Note that $R^{i,j}=| q ^i- q ^j|$ is of order $\rho^2$ when $i=0$, $j=1$, and of order $\rho^3$ otherwise, and $0<m^i-m^i_\rho<O(e^{-\lambda\rho})$.  Thus, we have:
$$   \int_{B_\rho({q^i})}\int_{B_\rho({q^j})}
           {|U_\rho^i(x)|^2 |U_\rho^j(y)|^2\over |x-y|} dx\, dy  =
           \begin{cases}  {\displaystyle{m^0\, m^1\over \rho^2}}+ O(\rho^{-3}), &\text{if $i+j=1$,}\\
                O(\rho^{-3}), &\text{otherwise,}
           \end{cases}
$$
and for $i=1,\dots,N$, $k=1,\dots,K$, we evaluate the interaction with $V_{TF}$ by:
$$
\int_{B_\rho({q^i})} {|U^i_\rho(x)|^2\over |x- r ^k|} dx
   = \begin{cases}
     {\displaystyle{m^1\over \rho^2}} + O(\rho^{-3}), &\text{if $i=1$,}\\
       O(\rho^{-3}), &\text{if $i\ge 2$.}
   \end{cases}
$$
Substituting into \eqref{upperbound2}, and using $W\ge 0$, we obtain the strict subadditivity of $I_V(M)$,
\begin{equation}\label{upperbound3}  I_V(M)- \left[ I_V(m^0) + \sum_{i=1}^N I_0(m^i)\right]\le  { m^0m^1 - \ZT m^1\over \rho^2} + O(\rho^{-3}) <0,  
\end{equation}
for all $\rho$ sufficiently large, since we are assuming $m^0<\ZT$.  However, this  contradicts \eqref{splitting2} in the Concentration Theorem, and  thus $m^0\ge \ZT$, and the Theorem is proven.
\end{proof}

\medskip

\begin{proof}[Proof of Proposition \ref{TSN2}]

In Proposition~\ref{MinimizingForZnisMinimizingFore0} we have already shown that any sequence of minimizers $u_n$ of $I_{V_n}$ forms a minimizing sequence for $I_{V_{TF}}$.    Part (ii) then follows from Theorem \ref{TheoremSplitNewtonian}.
\end{proof}

\section{Proof of Theorem~\ref{TheoremLocation}}

The proof of Theorem~\ref{TheoremLocation} is more intricate than that of Theorem~\ref{TheoremSplitNewtonian}, as it requires us to make a finer estimate of the smaller order terms in the expansion of the energy. 

By (i) of Proposition~\ref{TSN2}, $\{u_n\}_{n\in\mathbb{N}}$ is a minimizing sequence for $I_{V_{TF}}(M)$, and hence the conclusions of Concentration Theorem \ref{TheoremSplit} hold.  We assume there is splitting, that is $N\ge 1$, and let $u^i$, $m^i=\|u^i\|_{\mathcal{L}^2(\RRR)}^2$, and $x^i_n$ (with $x^0_n=0,$) be as in the Concentration Theorem.  By hypothesis, the common value of the Lagrange multipliers of the limit components $u^i$ is negative, $\mu<0$. 

As in the proof of Theorem~\ref{TheoremSplitNewtonian} we construct comparison functions by localization to balls with centers $ q ^i$ spreading to infinity.
However, we have little control on the errors introduced by the passage of $u_n(\cdot-x^i_n)\weak u^i$, and thus we use truncations of the minimizers $u_n$ themselves to make these constructions.
 
Set
\begin{equation}\label{Rndef}   R_n:= \min_{0\le i<j} | x ^i_n- x ^j_n|.  \end{equation}
Consider also a sequence $\rho_n\to\infty$ and translations $\{ q _n^0= 0 \}_{n\in\mathbb{N}},\dots, \{ q _n^N\}_{n\in\mathbb{N}}\subset\RRR$ (all to be chosen later,) with
\begin{equation}\label{papapirs}   1\leq\rho_n\leq\frac{1}{4}\min\{R_n,Q_n\} \quad \text{where} \
      Q_n:=\min_{i< j}\vert  q _n^{i}- q _n^{j}\vert.  
\end{equation}
Using the same cutoff functions $\phi$ defined in \eqref{asdadasd}, we then set
\begin{gather}\chi_{\rho_n}(\cdot):=\phi(\vert\cdot\vert-\rho_n+1),  \\
\label{GHdef}
G_n^i(\cdot):=\chi_{\rho_n}(\cdot- x _n^{i})u_n(\cdot),\quad\textit{ and } \quad H_n^i(\cdot):=G_n^i(\cdot+ x _n^i- q _n^i).\end{gather}
Thus, $G_n^i$ are compactly supported in balls $B_{\rho_n}(x^i_n)$ centered at the $x_n^i$, chosen by the Concentration Theorem, while $H^i_n$ are the same functions but translated to have centers at $ q ^i_n$, which we will choose to create appropriate comparison functions.  Set
\begin{align}m_n^i:=\vert\vert G_n^i\vert\vert^2_{\mathcal{L}^2(\RRR)}=\vert\vert H_n^i\vert\vert^2_{\mathcal{L}^2(\RRR)}.\end{align}

We first confirm that these truncations provide a good approximation to the limit profiles $u^i$, in the $\mathcal{L}^2$ sense.

\begin{Lem}\label{limitmass}
For any $\rho_n$ satisfying \eqref{papapirs},
$$  \lim_{n\to\infty} m_n^i=  m^i=\|u^i\|_{\mathcal{L}^2(\RRR)}^2 .  $$
\end{Lem}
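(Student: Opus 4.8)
The plan is to compare the $\mathcal{L}^2$-mass of the truncation $G_n^i$ with that of the translated minimizer $u_n(\cdot+x_n^i)$, which by the Concentration Theorem converges to $u^i$ in $\mathcal{H}^1(\RRR)$, hence in $\mathcal{L}^2_{loc}(\RRR)$. First I would write
$$
  m_n^i=\int_{\RRR}\chi_{\rho_n}^2(x-x_n^i)\,u_n^2(x)\,dx
       =\int_{\RRR}\chi_{\rho_n}^2(y)\,u_n^2(y+x_n^i)\,dy,
$$
and split this into the contribution from $B_{R_n/4}(0)$ (where $\chi_{\rho_n}\equiv 1$, since $\rho_n\le\frac14 R_n$ forces $\chi_{\rho_n}(y)=1$ for $|y|\le\rho_n-1$, and in particular I will only need $\chi_{\rho_n}\equiv 1$ on a fixed ball $B_{R/2}(0)$ for $n$ large) and the contribution from the complement. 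On the fixed ball $B_{R/2}(0)$ I use \eqref{lanuevacosa}, i.e. $u_n(\cdot+x_n^i)\to u^i$ in $\mathcal{L}^2(B_{R/2}(0))$, to get
$$
  \int_{B_{R/2}(0)}\chi_{\rho_n}^2(y)\,u_n^2(y+x_n^i)\,dy
     =\int_{B_{R/2}(0)} u_n^2(y+x_n^i)\,dy
     \longrightarrow \int_{B_{R/2}(0)} |u^i(y)|^2\,dy,
$$
which can be made within $\epsilon$ of $m^i$ by taking $R=R(\epsilon)$ large (decay of $u^i$, which holds by the remark following Lemma~\ref{ExpDecayun} even without $\mu<0$).

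The remaining step is to show the tail $\int_{\RRR\setminus B_{R/2}(0)}\chi_{\rho_n}^2(y)\,u_n^2(y+x_n^i)\,dy$ is small uniformly in $n$. Since $0\le\chi_{\rho_n}\le 1$, this is bounded by $\int_{\RRR\setminus B_{R/2}(x_n^i)} u_n^2(x)\,dx$, and the computation in the proof of Lemma~\ref{ExpDecayun} (using \eqref{ca1}, \eqref{xblow}, the convergence \eqref{lanuevacosa} for each of the finitely many indices $j=0,\dots,N$, and the decay of the $u^j$) shows precisely that
$$
  \lim_{n\to\infty}\int_{\RRR\setminus B_{R/2}(x_n^i)} u_n^2(x)\,dx
     =\sum_{j=0}^N\|u^j\|_{\mathcal{L}^2(\RRR\setminus B_{R/2}(0))}^2<\epsilon
$$
for $R\ge R_0(\epsilon)$. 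Combining the two pieces gives $|m_n^i-m^i|<2\epsilon$ for all $n$ large, and since $\epsilon>0$ is arbitrary the lemma follows.

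The only mild subtlety — the step I would be most careful about — is the interplay between the cutoff radius $\rho_n$ (which may grow slowly) and the fixed radius $R$ used to exploit local $\mathcal{L}^2$-convergence: one must note that $\chi_{\rho_n}\equiv 1$ on $B_{\rho_n-1}(0)\supset B_{R/2}(0)$ once $\rho_n\ge R/2+1$, which holds for $n$ large since $\rho_n\to\infty$, so the two regimes are compatible. Everything else is a routine splitting-and-tail-estimate argument reusing bounds already established in Lemma~\ref{LemaBoundsun} and the proof of Lemma~\ref{ExpDecayun}. Note this argument does not require $\mu<0$; it uses only \eqref{ca1}, \eqref{xblow}, and the decay at infinity of the limit profiles $u^i$.
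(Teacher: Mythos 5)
Your strategy — local strong $\mathcal{L}^2$ convergence on a fixed ball plus a uniform tail estimate — is genuinely different from the paper's proof, which never invokes \eqref{lanuevacosa}: there the bound $m^i\le\liminf_n m_n^i$ comes from weak lower semicontinuity of the $\mathcal{L}^2$ norm (since $G_n^i(\cdot+x_n^i)\rightharpoonup u^i$ in $\Hone$), and the complementary bound comes from the pointwise inequality $\sum_{i}G_n^i\le u_n$ together with the disjointness of the supports of the $G_n^i$ and the exact mass identity $\sum_i m^i=M$ from the Concentration Theorem, followed by an elementary argument showing that if $\sum_i m_n^i\to\sum_i m^i$ while each term satisfies $\liminf_n m_n^i\ge m^i$, then each term converges. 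That route is softer: it needs no tail control and no local strong convergence at all.

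Your version has a genuine error in the tail step as written. You bound $\int_{\RRR\setminus B_{R/2}(0)}\chi_{\rho_n}^2(y)\,u_n^2(y+x_n^i)\,dy$ by $\int_{\RRR\setminus B_{R/2}(x_n^i)}u_n^2\,dx$ and assert that this converges to $\sum_{j=0}^N\|u^j\|^2_{\mathcal{L}^2(\RRR\setminus B_{R/2}(0))}<\epsilon$. That limit is false whenever $N\ge1$: the complement of the \emph{single} ball $B_{R/2}(x_n^i)$ contains all the other balls $B_{R/2}(x_n^j)$, $j\neq i$, each carrying mass close to $m^j>0$, so by \eqref{lanuevacosa} the limit is actually $M-\|u^i\|^2_{\mathcal{L}^2(B_{R/2}(0))}$, which is of order $\sum_{j\neq i}m^j$ and not small. (The computation in the proof of Lemma~\ref{ExpDecayun} that you cite controls the complement of the \emph{union} $\bigcup_{j}B_{R/2}(x_n^j)$; the single-ball domain displayed there is a typo, as the first line of that computation makes clear.) The repair is short and uses only facts you already quoted: $\chi_{\rho_n}(\cdot-x_n^i)$ is supported in $B_{\rho_n}(x_n^i)$, and since $\rho_n\le\frac14R_n$ with $R_n\to\infty$ as in \eqref{Rndef}, for $n$ large (and $R/2\le\rho_n$) the annulus $B_{\rho_n}(x_n^i)\setminus B_{R/2}(x_n^i)$ is disjoint from every $B_{R/2}(x_n^j)$, $j\neq i$. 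Hence your tail is bounded by $\int_{\RRR\setminus\bigcup_{j}B_{R/2}(x_n^j)}u_n^2\,dx$, to which the cited computation applies and yields a bound below $\epsilon$ in the limit. With that one correction your argument is complete, and your closing observation is right: neither your proof nor the paper's needs $\mu<0$ for this lemma, only \eqref{ca1}, \eqref{xblow} and $u^j\in\mathcal{L}^2(\RRR)$.
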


\begin{proof}
First, it is easy to show that $G^i_n(\cdot+ x_n^i)\weak u^i$, $i=0,\dots, N$ weakly in $\Hone$, and in the norm on $\mathcal{L}^2_{loc}(\RRR)$.  As a consequence, 
\begin{equation}\label{liminf}   m^i=\|u_i\|_{\mathcal{L}^2(\RRR)}^2 \le \liminf_{n\to\infty} m_n^i.  
\end{equation}
To obtain the complementary bound, we note that $\sum_{i=0}^N G^i_n(x)\le u_n(x)$ pointwise on $\RRR$, and since the supports of the $G^i_n$ are disjoint we have
\begin{multline}
\limsup_{n\to\infty}  \sum_{i=0}^N m_n^i
   < \| u_n\|_{\mathcal{L}^2(\RRR)}^2 = M = \sum_{i=0}^N m^i 
   \le  \sum_{i=0}^N \liminf_{n\to\infty} m_n^i 
   \le \liminf_{n\to\infty}  \sum_{i=0}^N  m_n^i.
\end{multline}
In particular, the limit $M=\lim_{n\to\infty}\sum_{i=0}^N m_n^i$ exists.  Since individually the terms are bounded below via \eqref{liminf}, we claim that each of the terms $m_n^i \to m^i$, $i=0,\dots,N$.  Indeed, for any $\eps>0$ there exists $K>0$ for which $\sum_{i=0}^N m_n^i < M+\eps$ and for any $i$, $m_n^i\ge m^i -\eps/N$, whenever $n\ge K$. Thus, for each $j$ we have
$$  m_n^j + \sum_{i\neq j} m^i - \eps \le \sum_{i=0}^N m_n^i 
    < \sum_{i=0}^N m^i + \eps,  $$
and so $m_n^j < m^j + 2\eps$, for all $n\ge K$, that is, $\limsup_{n\to\infty} m_n^i\le m^i$, for each $i$, and the claim is proven.
\end{proof}

Since we are assuming $\mu<0$, the exponential decay of $u_n$ away from balls $B_{\rho_n}( x ^i_n)$ allows us to localize the energy $\EVn(u_n)$ with an exponentially small error:

\begin{Lem}\label{LowerBoundd} Let $\rho_n\to\infty$ with $\rho_n\le \frac14 R_n$. Then,
\begin{multline}\mathcal{E}_{V_n}(u_n)\ge \ETF\left(G_n^0\right)+\sum_{i=1}^N\mathcal{E}_{0}\left(G_n^i\right)-Z_n\int_{\RRR}\frac{\vert G_n^0( x )\vert^2}{\vert x \vert^{\nu}}d x  \\
+\sum_{{1\leq i<j}}\frac{m_n^{i}m_n^{j}}{\vert  x _n^{i}- x _n^{j}\vert}+\left(m_n^0-\ZT\right)\sum_{i=1}^N\frac{m_n^i}{\vert x _n^i\vert}-Z_n\sum_{i=1}^N \frac{m_n^{i}}{\vert x _n^{i}\vert^{\nu}} -\eps_{1,n},
\end{multline}
where
\begin{align}\label{epsilondecaysexpo1}\vert \epsilon_{1,n}\vert
\leq C\left({\rho_n\over R_n^2} + {Z_n\rho_n\over R_n^{1+\nu}}+ e^{-\frac{\sqrt{-\mu}}{2}\rho_n}\right),\end{align}
as $n\to\infty$, with $C$ depending on $\{m^i\}$ and $\ZT$ but independent of $\{x^i_n\}$.
\end{Lem}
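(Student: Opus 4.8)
The plan is to decompose $\RRR$ into the disjoint balls $B_{\rho_n}(x_n^i)$, $i=0,\dots,N$, and the exterior region $\Omega_n := \RRR\setminus\bigcup_i B_{\rho_n}(x_n^i)$, and to estimate the contribution of each piece of $\mathcal{E}_{V_n}(u_n)$ on these sets. For the local (non-convolution) terms in the energy --- the gradient term, the $c_1$ and $c_2$ powers, and the potential term $V_n u_n^2$ --- I would use the cutoff functions $\chi_{\rho_n}(\cdot-x_n^i)$ to split $u_n = \sum_i G_n^i + (\text{remainder supported in }\Omega_n)$, and throw away the nonnegative exterior gradient/$c_1$ contributions (these only help the lower bound), while controlling the $c_2$ term and the cross terms coming from $|\nabla(\chi u_n)|^2$ versus $\chi^2|\nabla u_n|^2$ using the bound $\|\phi'\|_\infty\le 2$ together with the exponential decay of $u_n$ from Lemma~\ref{ExpDecayun}. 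Each such error is $O(e^{-\lambda\rho_n})$ with $\lambda$ close to $\sqrt{-\mu}$; taking $\lambda = \tfrac12\sqrt{-\mu}$ absorbs them into $\eps_{1,n}$. The potential term needs care: on $B_{\rho_n}(x_n^0)$ we keep $V_{TF}$ inside $\ETF(G_n^0)$ and separate out $Z_n\int |G_n^0|^2/|x|^\nu$, which is exactly the term displayed; on $B_{\rho_n}(x_n^i)$ for $i\ge 1$, the full potential $V_n(x) = \sum_k \alpha_k/|x-r_k| + Z_n/|x|^\nu$ is evaluated by Lemma~\ref{localize}(b) (applied with $\zeta = x_n^i$, $r = r_k$ or $r=0$), giving $\big(\ZT + o(1)\big)m_n^i/|x_n^i| $ from the Coulomb part with error $O(\rho_n/R_n^2)$ and $Z_n m_n^i/|x_n^i|^\nu$ from the long-range part with error $O(Z_n\rho_n/R_n^{1+\nu})$; the leftover $-V_n u_n^2$ over $\Omega_n$ is dropped (it has a sign issue, so one keeps the whole of $-\int_{\Omega_n} V_n u_n^2$, but since $V_n\to 0$ at infinity and $u_n$ decays exponentially there, this too is $O(e^{-\lambda\rho_n})$ --- or one can simply bound $V_n$ in $\mathcal{L}^{3/2}+\mathcal{L}^\infty$ against $\|u_n\|_{\mathcal{L}^6(\Omega_n)}^2 + \|u_n\|_{\mathcal{L}^2(\Omega_n)}^2$, which is exponentially small).

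The heart of the matter is the nonlocal Coulomb term $\tfrac12\iint u_n^2(x)u_n^2(y)/|x-y|\,dx\,dy$. Writing $u_n^2 = \sum_i |G_n^i|^2 + 2\sum_i G_n^i r_n + r_n^2$ where $r_n = u_n - \sum_i G_n^i \ge 0$ is supported in $\Omega_n$, I would expand the double integral. The diagonal self-interaction pieces $\tfrac12\iint |G_n^i|^2|G_n^i|^2/|x-y|$ recombine into the $\mathcal{E}_0$ (or $\ETF$) functionals of $G_n^i$. All terms involving the remainder $r_n$ are nonnegative and are simply discarded, which is where the inequality (rather than equality) in the statement comes from. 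The off-diagonal terms $\iint_{i\ne j} |G_n^i(x)|^2|G_n^j(y)|^2/|x-y|$ are then estimated by Lemma~\ref{localize}(a) with $\zeta^i = x_n^i$, $\zeta^j = x_n^j$, $R^{i,j} = |x_n^i - x_n^j| \ge R_n$, yielding $\sum_{i<j} m_n^i m_n^j/|x_n^i - x_n^j|$ with total error $O(\rho_n/R_n^2)$; the $i=0$ pairs contribute $m_n^0\sum_{i\ge 1} m_n^i/|x_n^i|$ (recall $x_n^0 = 0$), and combining the $+m_n^0 m_n^i/|x_n^i|$ from the convolution with the $-\ZT m_n^i/|x_n^i|$ from the $V_{TF}$--interaction estimate above gives precisely the factor $(m_n^0 - \ZT)$ multiplying $\sum_{i\ge 1} m_n^i/|x_n^i|$.

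The main obstacle, and the step requiring the most bookkeeping, is organizing all the cutoff-induced cross terms so that one obtains a genuine \emph{lower} bound with the error controlled by the three stated quantities and nothing worse: one must be sure that every discarded term has a favorable sign (the exterior gradient and $c_1$ terms, and all remainder-involving pieces of the convolution, are $\ge 0$; the $-V_n r_n^2$ piece and the $-c_2$ exterior piece must instead be bounded in absolute value by $O(e^{-\lambda\rho_n})$ using Lemma~\ref{ExpDecayun} and Lemma~\ref{LemaBoundsun}). A secondary subtlety is that Lemma~\ref{localize} is stated for fixed centers while here both $\rho_n$ and $x_n^i$ vary with $n$; but since the hypothesis $\rho_n \le \tfrac14 R_n$ is exactly the condition $1<\rho<\tfrac14 R$ needed in Lemma~\ref{localize} (applied at each $n$), and the constants there are absolute, this causes no difficulty --- the resulting constant $C$ in \eqref{epsilondecaysexpo1} depends only on $\sup_n \sum_i m_n^i \le M$ (bounded by Lemma~\ref{limitmass}) and $\ZT$, and not on the locations $x_n^i$. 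Finally, replacing $m_n^i$ by $m^i$ inside the $O(\cdot)$ error terms (legitimate since $m_n^i\to m^i$ by Lemma~\ref{limitmass}) and collecting everything yields the claimed inequality with $\eps_{1,n}$ bounded as in \eqref{epsilondecaysexpo1}.
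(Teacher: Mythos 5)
Your proposal is correct and follows essentially the same route as the paper: localize $u_n$ into the truncated pieces $G_n^i$ using the exponential decay from Lemma~\ref{ExpDecayun} to control (or harmlessly discard) all cutoff-induced and exterior errors at order $e^{-\frac{\sqrt{-\mu}}{2}\rho_n}$, and then evaluate the Coulomb cross-interactions and the $V_{TF}$ and $Z_n|x|^{-\nu}$ attractions with Lemma~\ref{localize}, combining the $i=0$ repulsion with the nuclear attraction to produce the $(m_n^0-\ZT)$ coefficient. Your sign bookkeeping and error accounting match the paper's argument, so no changes are needed.
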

\begin{proof}By Lemma \ref{ExpDecayun}, for sufficiently large $n$,
\begin{align}\begin{split}\label{expansionss}\left\vert u_n(x)-\sum_{i=0}^NG_n^i(x)\right\vert\leq Ce^{-\frac{\sqrt{-\mu}}{2}\sigma_n(x)},\quad  x\in\Omega_n:=\RRR\setminus\bigcup_{i=0}^N B_{\rho_n}( x ^i_n),\end{split}\end{align}
where $\sigma_n(x)$ is as in Lemma~\ref{ExpDecayun}.
This together with \eqref{boundunexp}, \eqref{eqnun}, $\mu_n\to\mu$, Lemma \ref{LemaBoundsun}, $\vert\vert\nabla\chi_{n,\rho_n}\vert\vert_{\mathcal{L}^\infty(\RRR)}\leq2$, and H\"{o}lder estimates for first derivatives imply 
\begin{align}\begin{split}\label{gradientboundedexpo}
\left\vert\nabla \left(u_n(x)-\sum_{i=0}^NG_n^i(x)\right)\right\vert \leq Ce^{-\frac{\sqrt{-\mu}}{2}\sigma_n(x)}.\end{split}\end{align}
As $G^i_n(x)=u_n(x)$ in $B_{\rho_n}(x^i_n)$, and has support in $B_{\rho_n+1}(x^i_n)$, the contribution to the energy is unchanged in $\bigcup_{i} B_{\rho_n}(x^i_n)$, and is exponentially small in the complementary region, $\Om_n$.  Moreover, the energy density is integrable over $\Om_n$, and of order $\eps_{1,n}=O(e^{-\frac{\sqrt{-\mu}}{2}\rho_n})$.  Hence, we calculate:
\begin{align}\begin{split}\mathcal{E}_{V_n}(u_n)&=\sum_{i=0}^N\mathcal{E}_{V_n}\left(G_n^i\right)+\sum_{i<j}\int_{B_{\rho_n}( x _n^i)}\int_{B_{\rho_n}( x _n^j)}\frac{\vert G_n^i( x )\vert^2\vert G_n^j( y )\vert^2}{\vert x - y \vert}d x d y +\epsilon_{1,n} \\
&=\ETF\left(G_n^0\right)-Z_n\int_{B_{\rho_n}( 0 )}\frac{\vert G_n^0( x )\vert^2}{\vert x \vert^{\nu}}d x \\
&\ \ \ +\sum_{i=1}^N\left[\mathcal{E}_{0}\left(G_n^i\right)-\int_{\RRR}V( x )\vert G_n^i( x )\vert^2d x -Z_n\int_{B_{\rho_n}( x _n^i)}\frac{\vert G_n^i( x )\vert^2}{\vert x \vert^{\nu}}d x \right]\\
&\ \ \ +\sum_{i<j}\int_{B_{\rho_n}( x _n^i)}\int_{B_{\rho_n}( x _n^j)}\frac{\vert G_n^i( x )\vert^2\vert G_n^j( y )\vert^2}{\vert x - y \vert}d x d y +\epsilon_{1,n}\\
&=\ETF\left(G_n^0\right)+\sum_{i=1}^N\mathcal{E}_{0}\left(G_n^i\right)-Z_n\int_{B_{\rho_n}( 0 )}\frac{\vert G_n^0( x )\vert^2}{\vert x \vert^{\nu}}d x  \\
&\ \ \ +\sum_{i<j}\int_{B_{\rho_n}( x _n^i)}\int_{B_{\rho_n}( x _n^j)}\frac{\vert G_n^i( x )\vert^2\vert G_n^j( y )\vert^2}{\vert x - y \vert}d x d y \\
&\ \ \ -\sum_{i=1}^N\sum_{k=1}^K\alpha_k\int_{\RRR}\frac{\vert G_n^i( x )\vert^2}{\vert x - r _k\vert}d x -Z_n\sum_{i=1}^N\int_{B_{\rho_n}( x _n^i)}\frac{\vert G_n^i( x )\vert^2}{\vert x \vert^{\nu}}d x +\epsilon_{1,n}.  \label{enidentity}\end{split}
\end{align}

Now, we apply Lemma~\ref{localize} to evaluate the interaction terms.  In this way we have:
\begin{gather}
\int_{B_{\rho_n}( x _n^i)}\int_{B_{\rho_n}( x _n^j)}\frac{\vert G_n^i( x )\vert^2\vert G_n^j( y )\vert^2}{\vert x - y \vert}d x d y 
\ge {m^i_n m^j_n\over | x ^i_n- x ^j_n|} 
- 4m^i_n m^j_n{\rho_n\over R_n^2}, \\
\int_{\RRR}\frac{\vert G_n^i( x )\vert^2}{\vert x - r _k\vert}d x  
\le  {m^i_n \over |  x _n^i|} + C_1m^i_n {\rho_n\over R_n^2}, \\
\int_{B_{\rho_n}( x _n^i)}\frac{\vert G_n^i( x )\vert^2}{\vert x \vert^{\nu}}d x  \le {m^i_n\over | x ^i_n|^\nu} +
    C_\nu m^i_n {\rho_n\over R_n^{\nu+1}}.
\end{gather}
By substituting these estimates into \eqref{enidentity} we arrive at the desired lower bound.
\end{proof}

Next we create an upper bound estimate on the minimum energy by moving the localized components $H^i_n$ (which are simply translates of $G^i_n$,) to study the role of the $x^i_n$.  That is, we consider a trial function $w_n=\sum_{i=0}^N H^i_n$, which has the same localized components as $u_n$ but with centers $q^i_n$.  The advantage of this over the upper bound constructed for the proof of Theorem~\ref{TheoremSplitNewtonian} is that the terms of order $O(1)$ will exactly match those in the lower bound given by Lemma~\ref{LowerBoundd}.

\begin{Lem}\label{UpperBound}Let $\{\rho_n\}_{n\in\mathbb{N}}\subset(1,\infty)$ and $\{ q _n^{0}=0\}_{n\in\mathbb{N}},\dots,\{ q _n^{N}\}_{n\in\mathbb{N}}\subset\RRR$ satisfy \eqref{papapirs}. Then,
\begin{multline}\mathcal{E}_{V_n}(u_n)<\ETF\left(G_n^0\right)+\sum_{i=1}^N\mathcal{E}_{0}\left(G_n^i\right)-Z_n\int_{\RRR}\frac{\vert G_n^0( x )\vert^2}{\vert x \vert^{\nu}}d x \\ +\sum_{1\leq i<j\le N}\frac{m_n^{i}m_n^{j}}{\vert  q _n^{i}- q _n^{j}\vert}+\left(m_n^0-\ZT\right)\sum_{i=1}^N\frac{m_n^i}{\vert q _n^i\vert}-Z_n\sum_{i=1}^N \frac{m_n^{i}}{\vert q _n^{i}\vert^{\nu}} + \eps_{2,n},
\end{multline}
where
\begin{align}\label{epsilondecaysexpo2}\vert \eps_{2,n}\vert
\leq C\left({\rho_n\over Q_n^2} + {Z_n\rho_n\over Q_n^{1+\nu}}+ e^{-\frac{\sqrt{-\mu}}{2}\rho_n}\right),\end{align}
as $n\to\infty$, with $C$ depending on $\{m^i\}$ and $\ZT$ but independent of $\{q^i_n\}$.
\end{Lem}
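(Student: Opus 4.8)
The plan is to test $\EVn$ against the competitor $w_n:=\sum_{i=0}^N H_n^i$, where the $H_n^i$ are the translated truncations from \eqref{GHdef}. Since $\rho_n\le\frac14 Q_n$ by \eqref{papapirs}, the balls $B_{\rho_n}(q_n^i)$ are pairwise disjoint, so $\|w_n\|_{\mathcal{L}^2(\RRR)}^2=\sum_{i=0}^N m_n^i$, and the local parts of the energy together with the potential term $\int V_n w_n^2$ add exactly while the Coulomb double integral splits into self-interactions and cross terms. Thus, \emph{with no error in this identity},
\[
\EVn(w_n)=\sum_{i=0}^N\EVn(H_n^i)+\sum_{0\le i<j\le N}\int_{B_{\rho_n}(q_n^i)}\int_{B_{\rho_n}(q_n^j)}\frac{|H_n^i(x)|^2|H_n^j(y)|^2}{|x-y|}\,dx\,dy.
\]

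Next I would evaluate the self-energies. Since $q_n^0=0=x_n^0$ we have $H_n^0=G_n^0$, so $\EVn(H_n^0)=\ETF(G_n^0)-Z_n\int_{\RRR}|G_n^0(x)|^2|x|^{-\nu}\,dx$; and for $i\ge1$, translation invariance of $\Ez$ gives $\EVn(H_n^i)=\Ez(G_n^i)-\sum_{k=1}^K\alpha_k\int_{\RRR}\frac{|H_n^i(x)|^2}{|x-r_k|}\,dx-Z_n\int_{\RRR}\frac{|H_n^i(x)|^2}{|x|^\nu}\,dx$. I then apply Lemma~\ref{localize}: part (a) to each cross term, using $|q_n^i-q_n^j|\ge Q_n$ and $1\le\rho_n\le\frac14 Q_n$; part (b) with $r=r_k$ (exponent $1$) to the nuclear integrals, and the same pointwise argument with $r=0$ (exponent $\nu$) to the long-range integrals, using $|q_n^i|=|q_n^i-q_n^0|\ge Q_n$ and, for $n$ large so that $\max_k|r_k|<\frac14 Q_n$, the geometric hypotheses of part (b). Each replacement costs an error of size $O(m_n^im_n^j\,\rho_n/Q_n^2)$, $O(m_n^i\,\rho_n/Q_n^2)$, or $O(m_n^i\,Z_n\rho_n/Q_n^{\nu+1})$, with constants depending only on Lemma~\ref{localize}, $K$, and $\{\alpha_k\}$; since $m_n^i\to m^i$ by Lemma~\ref{limitmass}, the $m_n^i$ are uniformly bounded and the errors are independent of the $q_n^i$.

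Collecting terms then finishes the identity: for each $i\ge1$ the nuclear integrals contribute $-\ZT m_n^i/|q_n^i|$, which combines with the $i=0$ cross term $m_n^0m_n^j/|q_n^0-q_n^j|=m_n^0m_n^j/|q_n^j|$ (recall $q_n^0=0$) into $(m_n^0-\ZT)\sum_{i=1}^N m_n^i/|q_n^i|$; the remaining cross terms give $\sum_{1\le i<j\le N}m_n^im_n^j/|q_n^i-q_n^j|$; and the long-range integrals give $-Z_n\int_{\RRR}|G_n^0(x)|^2|x|^{-\nu}\,dx-Z_n\sum_{i=1}^N m_n^i/|q_n^i|^\nu$. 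This is exactly the displayed right-hand side of the Lemma, with $|\eps_{2,n}|\le C(\rho_n/Q_n^2+Z_n\rho_n/Q_n^{\nu+1})$, dominated by \eqref{epsilondecaysexpo2}. Finally, the $G_n^i$ vanish identically on $\Omega_n=\RRR\setminus\bigcup_i B_{\rho_n}(x_n^i)$ while $u_n>0$ there by Proposition~\ref{properties1}(iii), so $\|w_n\|_{\mathcal{L}^2(\RRR)}^2=\sum_i m_n^i<M$; strict monotonicity of $I_{V_n}$ (Proposition~\ref{properties1}(i)) then gives $\EVn(u_n)=I_{V_n}(M)<I_{V_n}(\|w_n\|_{\mathcal{L}^2(\RRR)}^2)\le\EVn(w_n)$, the asserted strict inequality. (Equivalently, one may rescale $w_n$ to mass exactly $M$; the mass defect $M-\sum_i m_n^i$ is $O(e^{-\sqrt{-\mu}\rho_n})$ by the exponential decay of $u_n$ in Lemma~\ref{ExpDecayun}, and the uniform bounds of Lemma~\ref{LemaBoundsun} turn this into an $O(e^{-\frac{\sqrt{-\mu}}{2}\rho_n})$ change of the energy, which is why the exponential term is retained in \eqref{epsilondecaysexpo2}.)

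The computation is essentially bookkeeping; the two points needing care are (i) checking that the geometric hypotheses of Lemma~\ref{localize} hold uniformly in $n$ (which reduces to $\rho_n\le\frac14 Q_n$ together with $Q_n$ large), and (ii) tracking which quantities are captured exactly and which carry the $\rho_n/Q_n$-type remainders, so that the $O(1)$ part of this upper bound matches, term by term, the $O(1)$ part of the lower bound in Lemma~\ref{LowerBoundd}. That verbatim matching is exactly why one truncates the minimizers $u_n$ themselves rather than their weak limits $u^i$, and is what will let the two one-sided estimates be combined in the proof of Theorem~\ref{TheoremLocation}.
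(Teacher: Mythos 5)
Your proposal is correct and follows essentially the same route as the paper: test against $w_n=\sum_{i=0}^N H_n^i$, use $\|w_n\|_{\mathcal{L}^2(\RRR)}^2<M$ and the strict monotonicity of $I_{V_n}$ to get $\mathcal{E}_{V_n}(u_n)<\mathcal{E}_{V_n}(w_n)$, then expand $\mathcal{E}_{V_n}(w_n)$ over the disjoint supports and apply Lemma~\ref{localize}, exactly as the paper does by referring back to the expansion in Lemma~\ref{LowerBoundd}. Your added observations (that the decomposition of $\mathcal{E}_{V_n}(w_n)$ is exact so the exponential term in \eqref{epsilondecaysexpo2} is not even needed, and that $|r_k|<\frac14 Q_n$ must hold for large $n$) are consistent with the paper's intended use.
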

\begin{proof}Set
\begin{align}\begin{split}w_{n}:=\sum_{i=0}^NH_n^i.\end{split}\end{align}
As $0\le w_n(x)\le u_n(x)$ for all $x\in\RRR$, $\|w_n\|_{\mathcal{L}^2(\RRR)}^2<\|u_n\|_{\mathcal{L}^2(\RRR)}^2$.  By the monotonicity of $I_{V_n}(M)$ (Proposition~\ref{properties1} (i),) 
\begin{align}\begin{split}\label{EunDomEwn}\mathcal{E}_{V_n}(u_n)=I_{V_n}(M)<I_{V_n}\left(\vert\vert w_n\vert\vert_{\mathcal{L}^2(\RRR)}^2\right)\leq \mathcal{E}_{V_n}(w_n).\end{split}\end{align}
Using the support properties of $H^i_n$ and recognizing $\ETF(H^0_n)=\ETF(G^0_n)$, $\Ez(H^i_n)=\Ez(G^i_n)$, we expand as in the proof of Lemma~\ref{LowerBoundd} to obtain the desired upper bound.
\end{proof}

By matching the lower bound from Lemma~\ref{LowerBoundd} with the upper bound from Lemma~\ref{UpperBound}, we conclude for any choice of $\rho_n$, $\{ q ^i_n\}$ satisfying \eqref{papapirs}, we have the following bound satisfied by the translations $\{ x ^i_n\}$:
\begin{multline}\label{LowerUpperBound}\sum_{ 1\leq i<j\le N}\frac{m_n^{i}m_n^{j}}{\vert  x _n^{i}- x _n^{j}\vert}+\left(m_n^0-\ZT\right)\sum_{i=1}^N\frac{m_n^i}{\vert x _n^i\vert}-Z_n\sum_{i=1}^N \frac{m_n^{i}}{\vert x _n^{i}\vert^{\nu}} \\
\le \sum_{ 1\leq i<j\le N}\frac{m_n^{i}m_n^{j}}{\vert  q _n^{i}- q _n^{j}\vert}+\left(m_n^0-\ZT\right)\sum_{i=1}^N\frac{m_n^i}{\vert q _n^i\vert}-Z_n\sum_{i=1}^N\frac{m_n^{i}}{\vert q _n^{i}\vert^{\nu}} +\eps_{1,n} +\eps_{2,n},
\end{multline}

where $\eps_{1,n},\eps_{2,n}$ are defined in the statements of the Lemmas~\ref{LowerBoundd} and~\ref{UpperBound}.

In what follows we exploit the freedom we have of choosing vectors $ q _n^i$ and radii $\rho_n$ to prove Theorem \ref{TheoremLocation}.  
First we must find the correct scale for the diverging centers $\{x^i_n\}$.  We define
\begin{equation}\label{Rbar0}
\Rnz:=\min_{1\leq i\leq N}|x^i_n|,
\end{equation}
and for $N\ge 2$,
\begin{equation}\label{Rbar}
\Rnb:= \min_{ 1\leq i<j\le N} |x^i_n-x^j_n|.
\end{equation}
By Concentration Theorem~\ref{TheoremSplit}, each diverges to infinity, and moreover $R_n=\min\{\Rnb,\Rnz\}$ (see \eqref{Rndef}.)  By passing to a subsequence and reordering the components if necessary, we may assume that the first diverging center is the closest:
$$    |x_n^1|= \Rnz, \quad\forall n\in\NN.  $$

\begin{Lem}\label{LemmaG} (a) \ If $m^0>\mathcal{Z}$, then
\begin{equation}\label{GG} \liminf_{n\to\infty} R_n Z_n^{\frac{1}{1-\nu}}>0.\end{equation}
(b) \ If $N\ge 2$ and $\displaystyle\limsup_{n\to\infty} {\Rnz\over \Rnb}>0$, then 
there exists a subsequence for which \eqref{GG} holds.
\newline
(c) \  If $N\ge 2$ and ${\Rnz\over \Rnb}\to 0$ then 
$$ \liminf_{n\to\infty} \Rnb Z_n^{\frac{1}{1-\nu}}>0.$$
\end{Lem}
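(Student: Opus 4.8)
\emph{Overall strategy.} All three claims follow by contradiction from the minimality $\EVn(u_n)=I_{V_n}(M)$, tested against truncated trial functions. If the relevant scale $S_n$ (equal to $R_n$ for (a),(b) and to $\Rnb$ for (c)) satisfied $S_nZ_n^{1/(1-\nu)}\to 0$ along a subsequence, then the interaction part of $\EVn(u_n)$ carried by the $x^i_n$ is bounded below by $c/S_n$ with $c>0$, whereas moving the fleeing components to the \emph{natural} scale $\lambda_n:=Z_n^{-1/(1-\nu)}$ (which is $\gg S_n$) produces a trial energy whose interaction part equals $Z_n^{1/(1-\nu)}$ times (essentially) $\inf F$, hence is $<0$ and $=o(1/S_n)$; once every error is shown to be $o(1/S_n)$ this is impossible.

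\emph{Parts (a) and (b).} Suppose $R_nZ_n^{1/(1-\nu)}\to0$ along a subsequence (for (b), first pass to a subsequence along which $\Rnz\ge\delta\Rnb$, so that $R_n$ and $\Rnb$ are comparable and $\Rnb Z_n^{1/(1-\nu)}\to0$ as well). Since $|x^i_n|\ge\Rnz\ge R_n$ and $|x^i_n-x^j_n|\ge\Rnb\ge R_n$, the left side of \eqref{LowerUpperBound} is $\ge c_0/(2R_n)$ for large $n$: if $R_n=\Rnz=|x^1_n|$ this comes from $(m^0_n-\ZT)m^1_n/|x^1_n|$ together with $m^0>\ZT$ (part (a)), while in part (b), where only $m^0\ge\ZT$, the term $(m^0_n-\ZT)\sum_i m^i_n/|x^i_n|$ is absorbed into the error using $|m^0_n-\ZT|=o(1)$ and $|x^i_n|\ge R_n$ and the lower bound comes instead from the repulsive pair realising $\Rnb\asymp R_n$; if $R_n=\Rnb$ the lower bound again comes from that pair. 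In every case $-Z_n\sum_i m^i_n|x^i_n|^{-\nu}=-O(Z_nR_n^{1-\nu})/R_n=o(1/R_n)$. Now fix $(y^1,\dots,y^N)\in\Sigma_N$ with $F_{N,(m^0,\dots,m^N)}(y^1,\dots,y^N)<0$ — such a point exists since $F_{N,(m^0,\dots,m^N)}(te^1,\dots,te^N)=O(t^{-1})-(\sum_i m^i)t^{-\nu}\to 0^-$ as $t\to\infty$, because $\nu<1$. In \eqref{LowerUpperBound} take $q^i_n=\lambda_n y^i$ and any $\rho_n$ with $\log R_n\ll\rho_n\ll R_n$; these satisfy \eqref{papapirs} because $R_n\ll\lambda_n$. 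The right side then equals $Z_n^{1/(1-\nu)}F_{N,(m^0_n,\dots,m^N_n)}(y^1,\dots,y^N)<0$ for large $n$, and $\eps_{1,n}+\eps_{2,n}=O\!\bigl(\rho_n R_n^{-2}+Z_n\rho_n R_n^{-1-\nu}+e^{-\frac12\sqrt{-\mu}\,\rho_n}\bigr)=o(1/R_n)$ by the choice of $\rho_n$, contradicting \eqref{LowerUpperBound}.

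\emph{Part (c).} If $m^0>\ZT$ this is immediate: part (a) gives $\liminf_n R_nZ_n^{1/(1-\nu)}>0$, and $\Rnz/\Rnb\to0$ forces $R_n=\Rnz\le\Rnb$ for large $n$. So assume $m^0=\ZT$ and suppose $\Rnb Z_n^{1/(1-\nu)}\to0$ along a subsequence; then $R_n=\Rnz<\Rnb\ll\lambda_n$. The obstruction to repeating the argument of (a) is that \eqref{LowerUpperBound} forces a common truncation radius $\rho_n\le\tfrac14 R_n=\tfrac14\Rnz$, and $\Rnz$ may grow far more slowly than $\log\Rnb$, so the exponential truncation error need not be $o(1/\Rnb)$. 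We circumvent this by treating the components of $u_n$ near $0$ and near $x^1_n$ as one block. Fix an intermediate scale $L_n$ with $\Rnz+\log\Rnb\ll L_n\ll\Rnb$ (for instance the geometric mean of $\Rnz+\log\Rnb$ and $\Rnb$), put $G^{01}_n:=\chi_{L_n}(\cdot)u_n$ — it captures all the mass of both $u^0$ and $u^1(\cdot-x^1_n)$ since $B_{L_n}(0)\supset B_{L_n-\Rnz}(x^1_n)$ and $L_n-\Rnz\to\infty$ — truncate each fleeing piece $i\ge2$ at radius $L_n$ as well, and compare $\EVn(u_n)$ with $\EVn(w_n)$, where $w_n$ keeps $G^{01}_n$ fixed and translates the fleeing pieces to $\lambda_n y^i$. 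Since $w_n\le u_n$ pointwise with $\|w_n\|_{\mathcal{L}^2(\RRR)}^2<M$, monotonicity of $I_{V_n}$ gives $\EVn(u_n)<\EVn(w_n)$. In this difference the (unchanged) self-energy of $G^{01}_n$ cancels, as do the translation-invariant parts $\Ez(G^i_n)$; because $x^1_n$ lies within $\Rnz\ll\Rnb$ of the origin, all surviving interaction distances are $\ge c\Rnb$, so Lemma~\ref{localize} together with $L_n\ll\Rnb$ reduces the comparison, modulo $o(1/\Rnb)$, to $S_n(x^2_n,\dots,x^N_n)<S_n(\lambda_n y^2,\dots,\lambda_n y^N)+o(1/\Rnb)$, where
\[
S_n(z^2,\dots,z^N):=\sum_{2\le i<j}\frac{m^i_nm^j_n}{|z^i-z^j|}+\bigl(\|G^{01}_n\|_{\mathcal{L}^2(\RRR)}^2-\ZT\bigr)\sum_{i=2}^N\frac{m^i_n}{|z^i|}-Z_n\sum_{i=2}^N\frac{m^i_n}{|z^i|^\nu}.
\]
Here $\|G^{01}_n\|_{\mathcal{L}^2(\RRR)}^2\to m^0+m^1$, so the effective charge is $m^0+m^1-\ZT=m^1>0$. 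Choosing $(y^2,\dots,y^N)\in\overline\Sigma_N$ with $\overline F_{N,(m^1,\dots,m^N)}(y^2,\dots,y^N)<0$ (again possible since $\overline F\to 0^-$ along $y^i=te^i$, $t\to\infty$) gives $S_n(\lambda_n y^2,\dots,\lambda_n y^N)=Z_n^{1/(1-\nu)}\bigl[\overline F_{N,(m^1,\dots,m^N)}(y^2,\dots,y^N)+o(1)\bigr]<0$ for large $n$, while the pair $(i_0,j_0)$ realising $\Rnb$ — either internal to $\{2,\dots,N\}$, giving the repulsion $m^{i_0}_nm^{j_0}_n/\Rnb\ge c_1/\Rnb$, or with $i_0=1$, so that $|x^{j_0}_n|\le\Rnz+\Rnb\le2\Rnb$ and the composite–piece interaction $(\|G^{01}_n\|_{\mathcal{L}^2(\RRR)}^2-\ZT)m^{j_0}_n/|x^{j_0}_n|\ge c_1/\Rnb$ — forces $S_n(x^2_n,\dots,x^N_n)\ge c_1/\Rnb-o(1/\Rnb)$. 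Combining, $c_1/\Rnb\le S_n(x^2_n,\dots,x^N_n)<o(1/\Rnb)$, a contradiction.

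\emph{Where the difficulty lies.} Parts (a),(b) and the case $m^0>\ZT$ of (c) are essentially bookkeeping of error terms against the gain $\asymp 1/S_n$. The one genuine obstacle is the degenerate case $m^0=\ZT$ of (c): one must realise that the components at $0$ and at $x^1_n$ have to be merged into a single block — so that truncation may be performed on the scale $L_n$ rather than on the possibly much smaller scale $\Rnz$ — and that, seen from the fleeing pieces, this block acts (to the relevant order) as a point charge $m^0+m^1-\ZT=m^1>0$ at the origin, which is precisely why $\overline F_{N,(m^1,\dots,m^N)}$, rather than $F_{N,(m^0,\dots,m^N)}$, is the right interaction energy in this regime.
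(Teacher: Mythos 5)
Your proposal is correct and follows essentially the same route as the paper: a contradiction argument built on the matched localized lower/upper bounds \eqref{LowerUpperBound}, including the key device in part (c) of merging the components at $0$ and $x^1_n$ into a single block truncated at an intermediate scale (the paper takes $\rnb=\sqrt{\Rnz\Rnb}$), which then acts as an effective charge $m^0+m^1-\ZT\ge m^1>0$ seen from the pieces at distance $\asymp\Rnb$. The only differences are cosmetic: you place the comparison centers at the natural scale $Z_n^{-1/(1-\nu)}$ with a fixed configuration where $F<0$, whereas the paper places them at scale $R_n$ (resp.\ $\Rnb$) and sends the separations $|p^i-p^j|\to\infty$ after passing to the limit; and your use of $|m^0_n-\ZT|=o(1)$ in (b) implicitly needs the harmless case split $m^0=\ZT$ versus $m^0>\ZT$, the latter already being covered by part (a).
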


\begin{proof}  First assume $m^0>\mathcal{Z}$.  
To derive a contradiction, assume that (along some subsequence) $R_n Z_n^{\frac{1}{1-\nu}}\to 0$.  Choose $ q ^i_n=R_n  p ^i$, for distinct fixed vectors $ p ^i$, $i=1,\dots,N$, and $ p^0= 0 $.  We also denote by
$ y ^i_n=R_n^{-1} x ^i_n$.  By the definition of $R_n$, we have $|y^i_n|\ge 1$ for all $i=1,\dots, N$, and 
$| y ^i_n- y ^j_n|\ge 1$ for all $0\le i<j\le N$.  Extracting a further subsequence if necessary, we may assume that either
\begin{equation}\label{closest}  |y^1_n|= 1, \ \text{or, \quad there exists $i_0,j_0\neq 0$ for which} \ | y ^{i_0}_n- y ^{j_0}_n|=1, \forall n\in\mathbb{N}.
\end{equation}
   Set $\rho_n=\sqrt{R_n}$, and so \eqref{papapirs} is satisfied for these choices, and in fact
$R_n\eps_{1,n},R_n\eps_{2,n}\to 0$, where $\eps_{1,n},\eps_{2,n}$ are the remainder terms defined in Lemmas~\ref{LowerBoundd} and \ref{UpperBound}.

We multiply \eqref{LowerUpperBound} by $R_n$ to obtain:
\begin{align}\begin{split}\label{GGG}&\sum_{1\leq i<j\le N}\frac{m_n^{i}m_n^{j}}{\vert  y _n^{i}- y _n^{j}\vert}+\left(m_n^0-\ZT\right)\sum_{i=1}^N\frac{m_n^i}{\vert y _n^i\vert} \\
&\qquad \le \sum_{ 1\leq i<j\le N}\frac{m_n^{i}m_n^{j}}{\vert  p ^{i}- p ^{j}\vert}+\left(m_n^0-\ZT\right)\sum_{i=1}^N\frac{m_n^i}{\vert p ^i\vert} \\
&\qquad\quad -Z_nR_n^{1-\nu}\sum_{i=1}^N\frac{m_n^{i}}{\vert p ^{i}\vert^{\nu}} +Z_nR_n^{1-\nu}\sum_{i=1}^N \frac{m_n^{i}}{\vert y _n^{i}\vert^{\nu}}+R_n\eps_{1,n} +R_n\eps_{2,n} \\
&\qquad \le \sum_{ 1\leq i<j\le N}\frac{m_n^{i}m_n^{j}}{\vert  p ^{i}- p ^{j}\vert}+\left(m_n^0-\ZT\right)\sum_{i=1}^N\frac{m_n^i}{\vert p ^i\vert} + o(1),
\end{split}\end{align}
as $Z_nR^{1-\nu}_n\to 0$ by the contradiction hypothesis.  Assuming that $| y ^{i_0}_n- y ^{j_0}_n|=1$ is chosen in \eqref{closest}, we then obtain
$$  m^{i_0}_n\, m^{j_0}_n \le \sum_{ 1\leq i<j\le N}\frac{m_n^{i}m_n^{j}}{\vert  p ^{i}- p ^{j}\vert}+\left(m_n^0-\ZT\right)\sum_{i=1}^N\frac{m_n^i}{\vert p ^i\vert} +o(1),
$$
which holds for all $n$ and any choice of vectors $ p ^i$.  Since $m^i_n\to m^i>0$, we obtain a contradiction by choosing $|p^i-p^j|$ (with $0\le i<j\le N$) sufficiently large.  
If the choice in \eqref{closest} yields $|y^1_n|=1$, we instead have 
$$  (m^{0}_n-\ZT)\, m^{1}_n \le \sum_{{1\leq i<j}}\frac{m_n^{i}m_n^{j}}{\vert  p ^{i}- p ^{j}\vert}+\left(m_n^0-\ZT\right)\sum_{i=1}^N\frac{m_n^i}{\vert p ^i\vert} +o(1).
$$
As we are assuming $m^0=\lim_{n\to \infty} m^0_n>\ZT$ we arrive at the same contradiction as above, choosing $|p^i-p^j|$ (with $1\le i<j\le N$) sufficiently large.
This completes the proof of (a).

For (b), we assume $N\ge 2$ and there exists a subsequence and $r>0$ for which 
 $\Rnz\ge r \Rnb$, but  $R_n Z_n^{\frac{1}{1-\nu}}\to 0$.   Recall that 
 $R_n=\min\{\Rnz,\Rnb\},$
  and so 
  $$\min\{r,1\}\Rnb\le R_n\le \Rnb,$$
   and so each of $\Rnz,\Rnb,R_n$ is of the same order of magnitude.
As in part (a), let $y_n^i=R_n^{-1}x_n^i$, $q_n^i=R_n p^i$, and choose $i_0,j_0$ for which $|x^{i_0}-x^{j_0}|=\Rnb$.  Note that 
$$  |y_n^{i_0}-y_n^{j_0}|^{-1} = {R_n \over \Rnb}\ge \min\{1, r\}. $$
    Again, multiply \eqref{LowerUpperBound} by $R_n$, and pass to the limit as in \eqref{GGG} to obtain:
\begin{equation}\label{ruth} \min\{1, r\} m^{i_0}_n\, m^{j_0}_n +\left(m_n^0-\ZT\right)\sum_{i=1}^N\frac{m_n^i}{\vert y_n ^i\vert}\le \sum_{0<i<j}\frac{m_n^{i}m_n^{j}}{\vert  p ^{i}- p ^{j}\vert}+\left(m_n^0-\ZT\right)\sum_{i=1}^N\frac{m_n^i}{\vert p ^i\vert} +o(1),
\end{equation}
for all $n$ and any choice of vectors $ p ^i$.  Since $m^i_n\to m^i>0$ and $m^0\geq \ZT$ by Theorem~\ref{TheoremSplitNewtonian}, we obtain a contradiction by choosing vectors $p^i$ with $|p^i-p^j|$ sufficiently large.

To prove (c) assume ${\Rnz\over \Rnb}\to 0$, and suppose (for a contradiction) that $\Rnb Z_n^{{1\over 1-\nu}}\to 0$.
First, we note that 
\begin{align}\vert x_n^i\vert&=\vert x_n^i-x_n^1+x_n^1\vert\geq\vert x_n^i-x_n^1\vert-\vert x_n^1\vert\geq \Rnb-\Rnz \geq \frac{1}{2}\Rnb \gg \vert x_n^1\vert,\quad i\geq2,n\gg1.\end{align}
and so only one of the centers is much closer to the origin than the others, $|x_n^1|\ll \Rnb\le |x_n^i|$, for all $i=2,\dots,N$.

Choose cut-off radii $\rho_n$ in Lemmas~\ref{LowerBoundd} and \ref{UpperBound} with $\Rnz\ll \rho_n \ll \Rnb$; for instance, $\rnb = \sqrt{\Rnz\Rnb}$.  Notice that the ball $B_{\rho_n}(0)$ now includes both $x_n^0=0$ and $x_n^1$.  In particular, when defining the disjoint components $G_n^i, H_n^i$ with $\Rnb$ and $\rnb$, we no longer have a component with $i=1$, but the $i=0$ piece accounts for the mass concentrating both at the origin and at $x_n^1$.  In particular, we will have,
\begin{equation} \label{combined}  \| G_n^0\|^2_{\mathcal{L}^2(\RRR)} = \| H_n^0\|^2_{\mathcal{L}^2(\RRR)} = m^0+m^1 + o(1) >\ZT.  
\end{equation}
In this way, we return to the same situation as in part (a), but where $\Rnb$ replaces $R_n$ as the decisive length scale.  As in (a), we choose distinct vectors $q^0=0$ and  $q^i$, $i=2,\dots, N$, and set $p^i_n:=\Rnb q^i$, and (as before) $y_n^i=x_n^i/\Rnb$.  Modulo a subsequence, either there is a pair with 
$|y_n^{i_0}-y_n^{j_0}|=1$, ($i_0,j_0\ge 2$,) or $i_0\ge 2$ with $|y_n^{i_0}|=1, \ \forall n$.   Then we multiply \eqref{LowerUpperBound} by $\Rnb$, to obtain:
\begin{multline}\label{LowerUpper2}\sum_{{2\leq i<j}}\frac{m_n^{i}m_n^{j}}{\vert  y _n^{i}- y_n^{j}\vert}+\left(m^0+m^1-\ZT+o(1)\right)\sum_{i=2}^N\frac{m_n^i}{\vert y _n^i\vert}-Z_n\Rnb^{1-\nu}\sum_{i=2}^N \frac{m_n^{i}}{\vert y _n^{i}\vert^{\nu}} \\
\le\sum_{{2\leq i<j}}\frac{m_n^{i}m_n^{j}}{\vert  p^{i}- p^{j}\vert}+\left(m^0+m^1-\ZT+o(1)\right)\sum_{i=2}^N\frac{m_n^i}{\vert p^i\vert}-Z_n \Rnb^{1-\nu}\sum_{i=2}^N\frac{m_n^{i}}{\vert p^{i}\vert^{\nu}} +\Rnb\overline{\eps}_{1,n} +\Rnb\overline{\eps}_{2,n},
\end{multline}
where $\overline{\eps}_{1,n}$ and $\overline{\eps}_{2,n}$ satisfy \eqref{epsilondecaysexpo1} and \eqref{epsilondecaysexpo2}, for $\Rnb,\rnb$ replacing $R_n,\rho_n$. In particular, $\Rnb \overline{\eps}_{1,n}$,$\Rnb \overline{\eps}_{2,n}\to 0$.  Employing the contradiction hypothesis $\Rnb Z_n^{{1\over 1-\nu}}\to 0$,
and the choice of $\Rnb,\rnb$ we deduce that (in the case $|y_n^{i_0}-y_n^{j_0}|=1$,)
$$
  m^{i_0}_n\, m^{j_0}_n \le \sum_{{2\leq i<j}}\frac{m_n^{i}m_n^{j}}{\vert  p ^{i}- p ^{j}\vert}+\left(m^0+m^1-\ZT+o(1)\right)\sum_{i=2}^N\frac{m_n^i}{\vert p ^i\vert} +o(1)   
$$
or (in the case $|y_n^{i_0}|=1$,)
$$
 (m^0+m^1-\ZT) m^{i_0} \le \sum_{{2\leq i<j}}\frac{m_n^{i}m_n^{j}}{\vert  p ^{i}- p ^{j}\vert}+\left(m^0+m^1-\ZT+o(1)\right)\sum_{i=2}^N\frac{m_n^i}{\vert p ^i\vert} +o(1) 
$$
In either case, we then arrive at the same contradiction as in (a), by choosing $|p^i-p^j|$ large enough, $i\neq j$.
\end{proof}

We now prove the main theorem on the convergence of concentration points at the scale  $R_n=O(Z_n^{-{1\over 1-\nu}})$.

\begin{proof}[Proof of Theorem \ref{TheoremLocation}]
Let $u_n$ attain the minimum in $I_{V_n}$ for each $n\to\infty$.  Applying the Concentration Theorem~\ref{TheoremSplit}, we obtain values of $N$, masses $m^{0},\dots,m^{N}$, and translations $\{ x ^i_n\}$.  

For part (i), we assume $m^0\in\mathcal{M}_{V_{TF}}^*$ and $m^0>\ZT$. 
For any choice of $N$ and masses $m^{0},\dots,m^{N}$ with $m^0>\ZT$, all minimizing sequences for $F_{N,(m^{0},\dots,m^{N})}( w ^1,\dots, w ^{N})$  on $\Sigma_N$ are convergent.  This follows by exactly the same argument as in the proof of Proposition 8 of Alama, Bronsard, Choksi, and Topaloglu~\cite{AlamaBronsardChoksiTopalogluDroplet}.  
Let $( a ^1,\dots, a ^{N})\in\Sigma_N$ be such a minimizer,
\begin{align}\begin{split}F_{N,(m^{0},\dots,m^{N})}( a ^1,\dots, a ^{N})=\min_{( w ^{1},\dots, w ^{N})\in\Sigma_N}F_{N,(m^{0},\dots,m^{N})}( w^1,\dots, w ^{N})<0.\end{split}\end{align}
Define $ \xi _n^i:=Z_n^{\frac{1}{1-\nu}} x _n^i$.  By Lemma~\ref{LemmaG}, $|\xi^i_n|, |\xi_n^i-\xi_n^j|\ge c>0$ are bounded below, for each $i=1,\dots,N$ and $j\neq i$.

Set 
\begin{align}\rho_n:=Z_n^{-\frac{1}{2(1-\nu)}},\textit{ and } q _n^{i}:=Z_n^{-\frac{1}{1-\nu}} a ^{i}.\end{align} 
Then, by the previous Lemma, up to a subsequence,
\begin{align}\begin{split}\label{laraca}1\leq\rho_n\leq\frac{1}{4}\min_{i< j}\{\vert  q _n^{i}- q _n^{j}\vert,R_n\},\end{split}\end{align}
so that equation \eqref{LowerUpperBound} holds, and 
\begin{multline}\sum_{{1\leq i<j}}\frac{m_n^{i}m_n^{j}}{\vert  \xi_n^i- \xi_n^{j}\vert}+\left(m_n^0-\ZT\right)\sum_{i=1}^N\frac{m_n^i}{\vert \xi_n^{i}\vert}-  \sum_{i=1}^N \frac{m_n^{i}}{\vert \xi_n^{i}\vert^{\nu}}\\
\le\sum_{{1\leq i<j}}\frac{m_n^{i}m_n^{j}}{\vert  a^{i}- a^{j}\vert}+\left(m_n^0-\ZT\right)\sum_{i=1}^N\frac{m_n^i}{\vert a^{i}\vert}-  \sum_{i=1}^N\frac{m_n^{i}}{\vert a^{i}\vert^{\nu}}+Z_n^{-\frac{1}{1-\nu}}(\epsilon_n+\hat{\epsilon}_n), \label{x2}
\end{multline}
where $\epsilon_n$ and $\hat\epsilon_n$ satisfy \eqref{epsilondecaysexpo1} and \eqref{epsilondecaysexpo2}, correspondingly. In particular, $Z_n^{-\frac{1}{1-\nu}}\epsilon_n,Z_n^{-\frac{1}{1-\nu}}\hat\epsilon_n\to0$.

In addition to this, by Lemma \ref{LemmaG}. 
\begin{align}\label{x1} \liminf_{n\to\infty}\vert \xi _n^i- \xi _n^j\vert\geq\liminf_{n\to\infty}Z_n^{\frac{1}{1-\nu}}R_n>0.\end{align}
By Lemma \ref{limitmass}, $\lim_{n\to\infty} m_n^i=m^i$, and hence applying \eqref{x2} and \eqref{x1} we obtain 
\begin{align}
\limsup_{n\to\infty}F_{N,(m^0,\dots,m^{N})}&( \xi _n^{1},\dots, \xi _n^{N})\\
&=\limsup_{n\to\infty}\left[\sum_{ 1\leq i<j\le N}\frac{m_n^{i}m_n^{j}}{\vert   \xi _n^i-  \xi _n^j\vert}+\left(m_n^0-\ZT\right)\sum_{i=1}^N\frac{m_n^i}{\vert   \xi _n^i\vert}-  \sum_{i=1}^N \frac{m_n^{i}}{\vert  \xi _n^i\vert^{\nu}}\right]\\
&\leq \limsup_{n\to\infty}\left[\sum_{ 1\leq i<j\le N}\frac{m_n^{i}m_n^{j}}{\vert   a ^i-  a ^j\vert}+\left(m_n^0-\ZT\right)\sum_{i=1}^N\frac{m_n^i}{\vert  a ^i\vert}-  \sum_{i=1}^N\frac{m_n^{i}}{\vert  a ^i\vert^{\nu}}\right]\\
&=F_{N,(m^{0},\dots,m^{N})}( a ^{1},\dots, a ^{N})\\
&=\min_{( w ^{1},\dots, w ^{N})\in\Sigma_N}F_{N,(m^{0},\dots,m^{N})}( w ^{1},\dots, w ^{N}).
\label{x12}
\end{align}
Therefore,  $\{(\xi^1_n,\dots,\xi_n^N)\}_{n\in\mathbb{N}}$  is a minimizing sequence for $F_{N,(m^{0},\dots,m^{N})}$ in $\Sigma_N$, and by \cite[Proposition 8]{AlamaBronsardChoksiTopalogluDroplet}, $\xi^i_n=x^i_nZ^{{1\over 1-\nu}}\to y^i$, $i=0,\dots,N$, with  $(y^1,\dots,y^N)$ a minimizing configuration for $F_{N,(m^{0},\dots,m^{N})}$. This completes the proof in case $m^0>\ZT$.

\medskip

Now consider (ii), for which $m^0=\ZT$.  We first show that $|x^1_n|\ll Z_n^{{1\over \nu-1}}$.  Indeed, assume the contrary that, up to a subsequence, $|x^1_n|\, Z_n^{{1\over 1-\nu}}\ge c>0$ for all $n$.   In case $N\ge 2$, by part (b) of Lemma~\ref{LemmaG}, then $R_n Z_n^{{1\over \nu-1}}\ge c'>0$. 
As in the proof of (i), define $\xi^i_n:= x^i_nZ_n^{{1\over 1-\nu}}$; then $\vert\xi^i_n\vert\ge c$, $i=1,\dots,N$, is bounded below.  We also fix any distinct points $p^1,\dots,p^N \in\R^3\setminus\{0\}$ and $q^i_n=p^i Z_n^{{1\over\nu-1}}$.  

We now proceed as above, arriving at \eqref{x2}.  Note that the inequality \eqref{x2} holds for any $N\ge 1$.  In fact, if $N=1$ the inequality simplifies significantly: the double sums are not present, and only the $i=1$ terms remain.
Passing to the limit as in \eqref{x12}, and recalling $m^0_n\to \ZT$, we then have
\begin{equation}\label{y1} \limsup_{n\to\infty}F_{N,(\ZT, m^1,\dots,m^{N})}(\xi_n^1,\dots,\xi_n^N) \le 
F_{N,(\ZT, m^1,\dots,m^{N})}( p^{1},\dots, p^{N}),\end{equation}
for any choice of distinct nonzero vectors $p^1,\dots,p^N$ in $\R^3$.
Now, as the $\xi^i_n$ are bounded below, the left hand side of the above inequality is finite.  However, the function $F_{N,(\ZT, m^1,\dots,m^{N})}$ is unbounded below, and thus we may choose $p^1,\dots,p^N$ so as to contradict the inequality.  We conclude that $|x_n^1|\ll Z_n^{{1\over \nu-1}}$.

Lastly, for $m^0=\ZT$ and $N\ge 2$ we prove the asymptotic distribution of the concentration centers.
For this, we return to the definitions of $\Rnb,\rnb$ in the proof of Lemma~\ref{LemmaG} (c) above, in which we proved that $\Rnb\ge c Z_n^{{1\over \nu-1}}$.  We recall that the components $G_n^0,H_n^0$ defined in \eqref{GHdef} (but using $\rnb$ in the cut-off $\chi_{\rnb}$,) will enclose neighborhoods of both $x_n^0=0$ and $x_n^1$, and hence their masses combine in $G_n^0,H_n^0$, as in \eqref{combined}.
By the same arguments as in 
\cite[Proposition 8]{AlamaBronsardChoksiTopalogluLongRange}, all minimizing sequences of 
the interaction energy $\overline{F}_{N,(m^{1},m^2,\dots,m^{N})}$ converge to a minimizer 
$(y^2,\dots,y^N)\in \overline{\Sigma}_N$.  Define $q_n^0=0$ and $q^i_n= y^i Z_n^{{1\over \nu-1}}$, $i=2,\dots,N$.
Applying \eqref{LowerUpperBound} with these choices, we have:
\begin{multline}\label{LowerUpperBoundSecondCase}\sum_{{2\leq i<j}}\frac{m_n^{i}m_n^{j}}{\vert  x _n^{i}- x _n^{j}\vert}+\left(m^0+m^1-\ZT +o(1) \right)\sum_{i=2}^N\frac{m_n^i}{\vert x _n^i\vert}-Z_n\sum_{i=2}^N \frac{m_n^{i}}{\vert x _n^{i}\vert^{\nu}} \\
\le\sum_{{2\leq i<j}}\frac{m_n^{i}m_n^{j}}{\vert  q _n^{i}- q _n^{j}\vert}+\left(m^0+m^1-\ZT +o(1) \right)\sum_{i=2}^N\frac{m_n^i}{\vert q _n^i\vert}-Z_n\sum_{i=2}^N\frac{m_n^{i}}{\vert q _n^{i}\vert^{\nu}} +\overline{\eps}_{1,n} +\overline{\eps}_{2,n},
\end{multline}
with (as in part (i)) $Z_n^{{1\over \nu-1}}\overline{\eps}_{1,n},Z_n^{{1\over \nu-1}}\overline{\eps}_{2,n}\to 0$.  Multiplying the above inequality by $Z_n^{{1\over \nu-1}}$, we pass to the limit and obtain an inequality for $\overline{F}_{N,(m^{1},m^2,\dots,m^{N})}$,
$$  \limsup_{n\to\infty} \overline{F}_{N,(m^{1},m^2,\dots,m^{N})} (\xi_n^2,\cdots,\xi_n^N) 
  \le \overline{F}_{N,(m^{1},m^2,\dots,m^{N})}(y^2,\cdots,y^N).
$$
Again, the renormalized centers $(\xi^2_n,\dots,\xi^N_n)$ give a minimizing sequence for $\overline{F}_{N,(m^{1},m^2,\dots,m^{N})}$ and must converge to a minimizer. 
This completes the proof of Theorem~\ref{TheoremLocation}.
\end{proof}


\section*{Acknowledgement}
We acknowledge the support of the Natural Sciences and Engineering Research Council of Canada (NSERC) Discovery Grants program.

\appendix
\section{Appendix: Proof of the Concentration Theorem}

In this section we prove the Concentration Theorem~\ref{TheoremSplit}.  The use of Concentration-Compactness techniques in Thomas-Fermi-type problems goes back at least to Lions~\cite{Lions}, for whom these problems were an important motivation for the development of the general theory.  The result of the Concentration Theorem~\ref{TheoremSplit} is essentially contained in Lions\cite{Lions}, although not as a single Theorem and with many details left to the reader.  Since we make heavy use of the decomposition into minimizers in the main results of the paper, we provide a more complete proof here (with specific references to steps appearing in other articles.)


\begin{proof}[Proof of Theorem \ref{TheoremSplit}]
We first present the proof with $V\not\equiv 0$; the case $V\equiv 0$ requires only a simple modification.
Let $\{u_n\}_{n\in\mathbb{N}}$ be a minimizing sequence for $\EV$ with $\|u_n\|_{\mathcal{L}^2(\RRR)}^2=M$.  
 Since $\mathcal{E}_V$ is coercive, $\{u_n\}_{n\in\mathbb{N}}$ is bounded in $\mathcal{H}^1(\RRR)$.  Hence, there exists $u^0\in\Hone$ and a subsequence for which
$u_n\rightharpoonup u^0$ weakly in $\Hone$.  At this point it is not clear if $u^0$ is nontrivial; this will be shown later. Let $m^0:=\|u^0\|_{\mathcal{L}^2(\RRR)}^2$ and $x^0_n=0$.  If $m^0=M$, then  the sequence converges strongly in $\mathcal{L}^2$, and $u^0$ minimizes $I_V(M)$, and the procedure terminates, with $N=0$.   

If instead $m^0<M$, we define the remainder $u^0_n(x):= u_n(x)- u^0(x+x^0_n)$.  Note that  by weak convergence, $\|u^0_n\|_{\mathcal{L}^2(\RRR)}^2 \to M-m^0$, and 
by weak convergence and the Brezis-Lieb Lemma, the energy decouples in the limit (see \cite{NamVBosch,Lieb}),
$$   \lim_{n\to\infty} \left[ \EV(u_n)- \EV(u^0)-\Ez(u^0_n)\right]=0, $$
and thus
$$   I_V(M) =\lim_{n\to\infty} \EV(u_n)=  \EV(u^0) + \lim_{n\to\infty} \Ez(u^0_n) \ge I_V(m^0) + I_0(M-m^0).  $$
By the binding inequality \eqref{binding}, we have
\begin{equation}\label{firststep}
  I_V(m^0) + I_0(M-m^0) \ge I_V(M)\ge \EV(u^0) + \lim_{n\to\infty} \Ez(u^0_n) \ge I_V(m^0) + I_0(M-m^0), 
  \end{equation}
and hence we obtain equality of each expression,
$$  \EV(u^0)=I_V(m^0) \quad\text{and}
   \quad \lim_{n\to\infty} \Ez(u^0_n)= I_0(M-m^0),  $$
that is, the remainder sequence $\{u^0_n\}_{n\in\mathbb{N}}$ is a minimizing sequence for $I_0(M-m^0)$.

We next consider the residual sequence $\{u^0_n\}_{n\in\mathbb{N}}$ and show it concentrates after translation.  First, we must eliminate the possibility of ``vanishing'' in the Concentration-Compactness framework \cite{LionsConcentrationPart1}.  To this end, for any bounded sequence we define  (as in Nam-van den Bosch \cite{NamVBosch},)
$$   \omega(\{v_n\}):= \sup\left\{ \|v\|^2 \ \bigl| \  \exists y_n\in\RRR \ 
    \text{and a subsequence such that $v_n(\cdot-y_n)\rightharpoonup v$ in $\Hone$}\right\}.
$$
We claim that  $\omega(\{u^0_n\})>0$.  Indeed, 
applying \cite[Lemma I.1]{LionsConcentrationPart1}, if $\omega(\{u^0_n\})=0$, then $u^0_n\to 0$ in $\mathcal{L}^q(\RRR)$ norm, $\forall 2<q<6$, so in particular $\int_{\RRR} (u^0_n)^{\frac{8}{3}}\to 0$.  In addition, by \eqref{condiV} we have 
$\int_{\RRR} V\, |u^0_n|^2 \to 0$, and hence 
$I_V(M)=\lim_{n\to\infty} \EV(u^0_n) \ge 0$, which contradicts Proposition~\ref{properties1}.  Hence ``vanishing'' cannot occur.

We can therefore choose a sequence $x_n^1\in\RRR$ for which $u^0_n(\cdot-x_n^1)\rightharpoonup u^1$, for some $u^1\in\Hone$, with mass 
$m^1:=\|u^1\|_{\mathcal{L}^2(\RRR)}^2 \ge \frac12\omega(\{u^0_n\})>0$.  As $u^0_n\weak 0$, we must have $|x^1_n|\to \infty$.
 In case $m^0=M-m^1$, the sequence converges strongly in $\mathcal{L}^2$, and $u^1$ minimizes $I_V(M-m^1)$, and we obtain \eqref{ca1}, \eqref{splitting1}, \eqref{splitting2}, and \eqref{xblow}, with $N=1$. 
 
If $m^1< M-m^0$, we again define the remainder sequence, $u^1_n(x):=u^0_n(x)-u^1(x+x^1_n)$.  By definition, $u^1_n\weak 0$, $u^1(\cdot-x_n^1)\weak 0$, and $\|u^1_n\|_{\mathcal{L}^2(\RRR)}^2 \to M-m^0-m^1$, and the energy splits,
$$  \Ez(u^0_n) = \Ez(u^1) + \Ez(u^1_n) +o(1)  $$
By the same argument as in \eqref{firststep}, this implies that $\Ez(u^1)=I_0(m^1)$, 
$I_0(M-m^0)=I_0(m^1)+I_0(M-m^0-m^1)$, and $\{u^1_n\}_{n\in\mathbb{N}}$ is a minimizing sequence for $I_0(M-m^0-m^1)$.  Substituting for $I_0(M-m^0)$ in \eqref{firststep} we conclude:
$$  I_V(M)=I_V(m^0)+ I_0(m^1)+I_0(M-m^0-m^1).  $$

We iterate the above process:  for each $k=2,3,\dots$ we obtain translations $\{x^k_n\}$ in $\RRR$, $|x^k_n|\to\infty,$ functions $u^k\in\Hone$ with 
\begin{equation}\label{omega}
 \|u^k\|_{\mathcal{L}^2(\RRR)}^2:= m^k\ge \frac12\omega(\{u^{k-1}_n\}), 
\end{equation}
and remainder sequences 
$$ u^k_n(x):= u^{k-1}_n(x)- u^k(x+x^k_n) = u_n(x)- u^0(x)-\sum_{i=1}^k u^i(x+x^i_n),
$$
satisfying:
\begin{gather}\nnn
\left\| u^k_n\right\|_{\mathcal{L}^2(\RRR)}^2 = M-\left( m^0 +\sum_{i=1}^k m^i\right) +o(1),  \\
u^k_n(\cdot-x^k_n)\weak 0, \quad \text{weakly in $\Hone$,}  \\
\nnn
I_V(M) = I_V( m^0) + \sum_{i=1}^k I_0(m^k),  \quad \text{and hence} \\
\nnn 
\Ez(u^k)= I_V(m^k).
\end{gather}

Next we show that $|x^k_n-x^i_n|\to\infty$ for all $i\neq k$.  Suppose not, and take the {\em smallest} $k>i$ for which $|x^k-x^i|$ remains bounded along some subsequence.  (And so $|x^i_n-x^j_n|\to\infty$ for all $i<j<k$.)  Taking a further subsequence, $(x^k-x^i)\to \xi$ for some $\xi\in\RRR$.  Now note that
$u^i_n(x) = u^k_n(x) + \sum_{j=i+1}^k u^j(x+x^j_n)$, and hence
\begin{equation}\label{xn2}  u^i_n(x-x^i_n)
    = u^k_n(x-x^i_n) + u^k(x-x^i_n+x^i_n) +\sum_{j=i+1}^k u^j(x-x^i_n+x^j_n).
\end{equation}
Since $|x^j_n-x^i_n|\to\infty$ for $i<j<k$, $u^j(\cdot-x^i_n+x^j_n)\weak 0$, while
$u^k(\cdot-x^i_n+x^k_n)\to u^k(\cdot +\xi)$.  And $u^k_n(\cdot-x^i_n)\weak 0$, and hence we pass to the limit in \eqref{xn2} to obtain 
$u^i_n(\cdot-x^i_n)\weak u^k(\cdot+\xi)\neq 0$, a contradiction.  Hence \eqref{xblow} is verified.

We claim that this process must terminate at some finite step $k=N$, for which 
$M=m^0+\sum_{i=1}^N m^i$.  Indeed, if $m^i>0$ for all $i\in\mathbb{N}$, since $M\ge \sum_{i=0}^k m^i$ for all $k$, we have $\lim_{k\to\infty}m^k=0$.  
By \eqref{omega} we conclude that $\lim_{j\to\infty}\omega(\{u^j_n\})=0$, i.e., the remaining mass after $k$ steps,
$(M-\sum_{i=0}^k m^i)$ may be made arbitrarily small.  However, by the concavity of $I_0(M)$ for small (see Appendix or \cite[Lemma 9 (iii)]{NamVBosch},) there exists $M_c>0$ such that minimizing sequences for $I_0$ do not split for $M<M_c$.
This proves statements \eqref{ca1}, \eqref{splitting1}, and \eqref{splitting2}.

For $V\not\equiv 0$, we now show that $m^0>0$, and hence the translations $x_n^0=0$ are trivial in this case.  Indeed, if $m^0=0$, consider the sequence
$\tilde u_n=u_n(x-x^1_n)$.  As $\Ez$ is translation invariant, and $\EV(u^1)<\Ez(u^1)$, a simple calculation shows $\lim_{n\to\infty}\EV(\tilde u_n)< I_V$, which is not possible.
For $V\equiv 0$, the functional $\Ez$ is translation invariant.  Hence, we may begin the process at the Step $k=1$, defining $\omega(\{u_n\})$ and identifying a first set of translates $\{x^0_n\}$ as above.  By translation invariance, $\tilde u_n=u_n(\cdot-x^1_n)$ is also a minimizing sequence for $I_0(M)$, and the weak limit $u^0=w-\lim_{n\to\infty} \tilde u_n$ will be nontrivial.  The rest of the proof continues as in the case of nontrivial $V$.

It remains to show that each $u^i$ solves the Euler-Lagrange equation with the same Lagrange multiplier $\mu$.  By the Ekeland Variational Principle \cite{Ekeland} (see also \cite[Corollary 5.3]{Struwe},)  we may find a minimizing sequence $\{v_n\}_{n\in\mathbb{N}}$, with
$\|v_n\|_{\mathcal{L}^2(\RRR)}^2=M$ and $\|v_n-u_n\|\to 0$, for which the Euler-Lagrange equation is solved up to an small error in $\mathcal{H}^{-1}(\RRR)$.  That is, $\exists\mu_n\in\R$ with
$$   D\EV(v_n) - \mu_n v_n \to 0 \quad\text{in $\mathcal{H}^{-1}$ norm.}
$$
The Lagrange multipliers may be expressed as:
$$  \mu_n M = \langle D\EV(v_n), v_n\rangle + o(1)\|v_n\|_{\Hone}
    = \langle D\EV(v_n), v_n\rangle + o(1),
$$
as minimizing sequences are bounded.  By Lemma~\ref{LemaBoundsun}, $|\mu_n|$ is uniformly bounded, and hence (after extracting a sequence) we may assume $\mu_n\to \mu$ for some $\mu\in\R$.  As $u_n(\cdot-x^i_n)\weak u^i$ weakly in $\Hone$, the same is true for $\tilde v_n:=v_n(\cdot-x^i_n)\weak u^i$, $i=0,\dots,N$.  Hence, for every $\varphi\in C_0^\infty(\RRR)$,
$$   \langle D\EV(u^0)-\mu u^0, \varphi\rangle 
    = \lim_{n\to\infty} \langle D\EV(\tilde v_n)-\mu_n \tilde v_n, \varphi\rangle =0,  $$
and similarly, $D\Ez(u^i)-\mu u^i=0$, $i=1,\dots,N$.
\end{proof}

\section{Appendix:  Concavity for small mass}

We show that $I_0(M)$ is concave for small values of $M$.  This is another property which the TFDW-type functionals share with Gamow's liquid drop model.

\begin{Prop}$I_0(M)$ is strictly concave for $M$ sufficiently small.\end{Prop}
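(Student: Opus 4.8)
The plan is to exploit the dilation symmetry that preserves the constraint $\|u\|_{\mathcal{L}^2(\RRR)}^2=M$ in order to factor the mass out of $\Ez$ as explicit powers of $M$. Writing every admissible $u$ uniquely as $u(x)=M\,\phi(M^{1/3}x)$ with $\|\phi\|_{\mathcal{L}^2(\RRR)}^2=1$, a change of variables gives
\[
   \Ez(u)=M^{5/3}\,\mathcal{J}_0(\phi)+M^{7/3}\,\mathcal{J}_1(\phi),
\]
where $\mathcal{J}_0(\phi):=\int_{\RRR}|\nabla\phi|^2-c_2\int_{\RRR}|\phi|^{8/3}$ and $\mathcal{J}_1(\phi):=c_1\int_{\RRR}|\phi|^{10/3}+\tfrac12\int_{\RRR}\int_{\RRR}\frac{\phi^2(x)\phi^2(y)}{|x-y|}\,dx\,dy\ge 0$. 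Hence $I_0(M)=\inf\{M^{5/3}\mathcal{J}_0(\phi)+M^{7/3}\mathcal{J}_1(\phi):\|\phi\|_{\mathcal{L}^2(\RRR)}^2=1\}$. Two structural facts about $\mathcal{J}_0,\mathcal{J}_1$ will drive the proof. First, $\mathcal{I}:=\inf_{\|\phi\|_{\mathcal{L}^2(\RRR)}=1}\mathcal{J}_0(\phi)\in(-\infty,0)$: finiteness follows from the Gagliardo--Nirenberg bound $\|\phi\|_{8/3}^{8/3}\le C\|\nabla\phi\|_2\|\phi\|_2^{5/3}$, which gives $\mathcal{J}_0(\phi)\ge\|\nabla\phi\|_2^2-c_2C\|\nabla\phi\|_2\ge-\tfrac14c_2^2C^2$, and negativity from $\mathcal{J}_0(\sigma^{3/2}\phi(\sigma\cdot))=\sigma^2\|\nabla\phi\|_2^2-c_2\sigma\|\phi\|_{8/3}^{8/3}<0$ for $\sigma$ small. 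Second, with $\eta:=|\mathcal{I}|/2$, the sublevel set $\{\|\phi\|_{\mathcal{L}^2(\RRR)}^2=1:\mathcal{J}_0(\phi)<\mathcal{I}+\eta\}$ is bounded in $\Hone$ (the same GN estimate traps $\|\nabla\phi\|_2$ in a bounded interval once $\mathcal{J}_0<0$), so $\mathcal{J}_1$ is bounded on it, say $\mathcal{J}_1\le B$, using $\Hone\hookrightarrow\mathcal{L}^{10/3}(\RRR)$ and, for the Coulomb term, Hardy--Littlewood--Sobolev together with $\Hone\hookrightarrow\mathcal{L}^{12/5}(\RRR)$.

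Next I would localize the relevant competitors. Fix $\phi_0$ with $\|\phi_0\|_{\mathcal{L}^2(\RRR)}^2=1$ and $\mathcal{J}_0(\phi_0)<\mathcal{I}+\eta/2$, and set $B_1:=\mathcal{J}_1(\phi_0)<\infty$; using $\phi_0$ as a trial profile, $I_0(M)\le M^{5/3}(\mathcal{I}+\eta/2)+M^{7/3}B_1$. For $M$ small, let $u_M\leftrightarrow\phi_M$ be a minimizer (or an almost-minimizer; see below) of $I_0(M)$. Since $\mathcal{J}_1(\phi_M)\ge0$ gives $I_0(M)\ge M^{5/3}\mathcal{J}_0(\phi_M)$, comparing with the upper bound forces $\mathcal{J}_0(\phi_M)\le\mathcal{I}+\eta/2+M^{2/3}B_1<\mathcal{I}+\eta$ once $M$ is small, and hence also $\mathcal{J}_1(\phi_M)\le B$. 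Thus there is $\delta>0$ such that for every $M\in(0,\delta)$ one has $a_M:=\mathcal{J}_0(\phi_M)\in[\mathcal{I},\mathcal{I}/2)$, $b_M:=\mathcal{J}_1(\phi_M)\in[0,B)$, and
\[
   I_0(M)=h_M(M),\qquad I_0(M')\le h_M(M')\ \ (M'>0),\qquad h_M(t):=a_M t^{5/3}+b_M t^{7/3}.
\]

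To conclude, given $0<M_1<M_2<\delta$ and $\theta\in(0,1)$ set $M_\theta=\theta M_1+(1-\theta)M_2$ and use $h_{M_\theta}$, which is $C^2$ on $(0,\infty)$ with
\[
   h_{M_\theta}''(t)=\tfrac19 t^{-1/3}\bigl(10\,a_{M_\theta}+28\,b_{M_\theta}t^{2/3}\bigr)
   \le \tfrac19 t^{-1/3}\bigl(5\mathcal{I}+28B\,\delta^{2/3}\bigr)<0\quad\text{on }(0,\delta),
\]
after shrinking $\delta$ so that $28B\delta^{2/3}<5|\mathcal{I}|$. Hence $h_{M_\theta}$ is strictly concave on $(0,\delta)$, and therefore
\[
   \theta I_0(M_1)+(1-\theta)I_0(M_2)\le\theta\,h_{M_\theta}(M_1)+(1-\theta)\,h_{M_\theta}(M_2)<h_{M_\theta}(M_\theta)=I_0(M_\theta),
\]
which is the asserted strict concavity. (If attainment of $I_0(M)$ for small $M$ is not invoked, one replaces $\phi_M$ by almost-minimizers $\phi_M^{(k)}$ with vanishing errors; for fixed $M_1,M_2,\theta$ the coefficients $a^{(k)}_{M_\theta},b^{(k)}_{M_\theta}$ remain in the same fixed ranges, so the strict-concavity gap of $h^{(k)}_{M_\theta}$ is bounded below uniformly in $k$, and one passes to the limit.) I expect the main obstacle to be the second structural fact: one must show that near-minimizing profiles of $\mathcal{J}_0$ stay in a fixed bounded subset of $\Hone$ (in particular do not concentrate), since otherwise $\mathcal{J}_1$, hence $b_{M_\theta}$, could be unbounded and the $t^{2/3}$ contribution would overwhelm the $t^{-1/3}$ singularity carried by the negative principal term $a_{M_\theta}t^{5/3}$ — which is exactly the mechanism producing concavity near $0$.
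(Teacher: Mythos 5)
Your proof is correct, and it takes a route that differs in its mechanics from the paper's, though both share the same skeleton: write $I_0(M)$ as an infimum of explicit functions of $M$ and transfer their strict concavity to the infimum via a branch touching $I_0$ at the intermediate mass. The paper uses the Nam--Van Den Bosch representation $I_0(M)=\inf_u F_u(M)$, in which the dilation parameter has already been optimized out, so each branch $F_u(M)=-\tfrac{M^{5/3}(C_u-M^{2/3}D_u)_+^2}{4(A_u+M^{2/3}B_u)}$ is shown to be strictly concave for small $M$ \emph{uniformly over all} unit-mass profiles $u$, by an explicit second-derivative computation combined with the bounds $B_u\le CM^{2/3}A_u$, $D_u\le CM^{2/3}C_u$; no restriction to near-minimizers is needed. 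You instead factor the mass out by the rescaling $u=M\phi(M^{1/3}\cdot)$, which yields the cleaner two-power branches $h_\phi(t)=\mathcal{J}_0(\phi)\,t^{5/3}+\mathcal{J}_1(\phi)\,t^{7/3}$; these are \emph{not} concave for arbitrary $\phi$ (e.g.\ when $\mathcal{J}_0(\phi)\ge 0$), so you must pay for the simpler algebra with the localization step: the Gagliardo--Nirenberg coercivity of $\mathcal{J}_0$, which pins near-optimal profiles in a fixed $\mathcal{H}^1$-ball, bounds $\mathcal{J}_1$ there, and forces $\mathcal{J}_0(\phi_M)$ into $[\mathcal{I},\mathcal{I}/2)$ so that the negative $t^{-1/3}$ term in $h''$ dominates on $(0,\delta)$. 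You correctly identified this as the crux, and your treatment of it is complete; your remark on replacing minimizers by almost-minimizers also removes any reliance on attainment of $I_0(M)$ (which the paper implicitly uses by quoting Nam--Van Den Bosch). In short, the paper's formula buys uniformity in the profile at the cost of a messier derivative computation, while your argument buys transparent branch functions at the cost of an a priori sublevel-set bound; both deliver the same uniform strict concavity for small mass.
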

\begin{proof} 
Nam and Van Den Bosch ~\cite{NamVBosch} showed $I_0(M)$ is attained for $M$ small enough by exploiting 
\begin{align} I_0(M)=\inf\{F_u(M);u\in\mathcal{H}^1(\RRR),\vert\vert u\vert\vert_{\mathcal{L}^2(\RRR)}=1\},\quad  M>0,\end{align}
where
\begin{align} F_u(M):=-\frac{M^{\frac{5}{3}}\left(C_u-M^{\frac{2}{3}}D_u\right)_+^2}{4\left(A_u+M^{\frac{2}{3}}B_u\right)}, \end{align}
with
\begin{align} A_u&:=\int_{\RRR}\vert\nabla u( x )\vert^2d x , \ \quad B_u:=c_1\int_{\RRR}\vert u( x )\vert^{\frac{10}{3}}d x ,\\ C_u&:=c_2\int_{\RRR}\vert u( x )\vert^{\frac{8}{3}}d x ,\quad D_u:=\frac{1}{2}\int_{\RRR}\int_{\RRR}\frac{ u^2( x ) u^2( y ) }{\vert x - y \vert}d x d y . \end{align}
Indeed, they proved  $M\ll1\mapsto I_0(M)$ is strictly subadditive by showing that $M\ll1\mapsto F_u(M)/M$ is strictly increasing, uniformly in $u$. The latter was established by making use of the inequalities 
\begin{align}\label{IneqTermsABD}B_u\leq CM^{\frac{2}{3}}A_u, \ \ D_u\leq CM^{\frac{2}{3}}C_u,\quad   u\in \mathcal{H}^1(\RRR).\end{align}
\eqref{IneqTermsABD} follow from H\"{o}lder's inequality, Sobolev's inequality, Hardy-Littlewood's inequality, and the interpolation inequality in Lebesgue spaces. \newline

Then, let us fix any $\alpha\in(0,1)$, and $M_1,M_2\ll1$. By \eqref{IneqTermsABD},
\begin{align}\frac{d^2F}{dM^2}=\frac{d^2}{dM^2}\left[-\frac{M^{\frac{5}{3}}\left(C_u-M^{\frac{2}{3}}D_u\right)^2}{4\left(A_u+M^{\frac{2}{3}}B_u\right)}\right]=-\frac{2G_u(M)}{9M^{\frac{1}{3}}(A_u+M^{\frac{2}{3}}B_u)^3},\quad  M\ll1,\end{align}
where
\begin{align} G_u(M)&:=14M^{\frac{8}{3}}B_u^2D_u^2+M^2(37A_uB_uD_u^2-10B_u^2C_uD_u)+M^{\frac{4}{3}}(27A_u^2D_u^2-30A_uB_uC_uD_u) \\
&\ \ \ +M^{\frac{2}{3}}(-28A_u^2C_uD_u+A_uB_uC_u^2)+5A_u^2C_u^2\\ 
&>A_u^2C_u^2(-10M^2-30M^{\frac{4}{3}}-28M^{\frac{2}{3}}+5)>0,\end{align}
uniformly in $u$. Therefore, $F_u(M)$ is strictly concave for $M\ll1$ uniformly in $u$. In consequence, for some $u*=u*_{\alpha,M_1,M_2}$
\begin{align} I_0(\alpha M_1+(1-\alpha)M_2)&=F_{u^*}(\alpha M_1+(1-\alpha)M_2)\\
&>\alpha F_{u^*}(M_1)+(1-\alpha)F_{u^*}(M_2)\\
&\geq\alpha I_0(M_1)+(1-\alpha)I_0(M_2). \end{align}
Since $\alpha$, $M_1$, and $M_2$ were arbitrary, we conclude that $I_0$ is strictly concave for $M\ll1$.\end{proof}

\end{document}